\documentclass[11pt]{article}

\usepackage{amsmath,amscd,amsbsy,amssymb,latexsym,url,bm,amsthm}
\usepackage{array}
\usepackage{algorithm}
\usepackage{algorithmic}
\usepackage{booktabs}
\usepackage{etoolbox}
\preto\subequations{\ifhmode\unskip\fi} 
\usepackage[T1]{fontenc}
\usepackage{graphicx}
\usepackage{multirow,makecell}
\usepackage{mathtools}
\usepackage{subeqnarray}
\usepackage[caption=false,font=footnotesize,subrefformat=parens,labelformat=parens]{subfig}
\usepackage{tabularx}
\usepackage{threeparttable}
\usepackage{times}
\usepackage{tikz}
\usepackage{xcolor}
\allowdisplaybreaks
\urlstyle{same}

\newif\ifarxivVersion
\arxivVersiontrue  

\ifarxivVersion
    \usepackage{natbib}
    \bibliographystyle{plainnat}
    \usepackage[margin=1in]{geometry}
    \usepackage[colorlinks=false,allbordercolors={1 1 1}]{hyperref}
    \usepackage[capitalize]{cleveref}
    \newtheorem{example}{Example} 
    \newtheorem{theorem}{Theorem}
    \newtheorem{lemma}[theorem]{Lemma} 
    \newtheorem{proposition}[theorem]{Proposition} 
    
    \newtheorem{corollary}[theorem]{Corollary}
    
    \long\def\acks#1{\vskip 0.3in\noindent{\large\bf Acknowledgments}\vskip 0.2in\noindent #1}
    \newenvironment{keywords}{\bgroup\leftskip 20pt\rightskip 20pt \small\noindent{\bf Keywords:} }%
    {\par\egroup\vskip 0.25ex}
\else
    \usepackage[preprint]{jmlr2e}
\fi

\newtheorem{assumption}{Assumption}

\crefname{assumption}{Assumption}{Assumptions}
\crefname{corollary}{Corollary}{Corollaries}
\crefname{theorem}{Theorem}{Theorems}
\crefname{lemma}{Lemma}{Lemmas}

\DeclareMathOperator{\E}{\mathop{\mathbb{E}}}
\DeclareMathOperator{\proj}{Proj}
\DeclareMathOperator{\sgn}{sgn}

\newcommand{\bigO}[1]{\ensuremath{\mathop{}\mathopen{}\mathcal{O}\mathopen{}\left(#1\right)}}
\renewcommand{\bar}[1]{\mkern 1.5mu\overline{\mkern-1.5mu#1\mkern-1.5mu}\mkern 1.5mu}
\newcommand{\barlambda}{\bar{\lambda}}
\newcommand{\ud}{\,\mathrm{d}}
\newcommand{\PR}[1]{\Pr\left(#1\right)}
\newcommand{\RNum}[1]{\uppercase\expandafter{\romannumeral #1\relax}}
\newcommand{\numcircled}[1]{\tikz[baseline=(char.base)]{
            \node[shape=circle,draw,inner sep=0.6pt] (char) {#1};}} 
\newcommand{\obj}{\tilde{\Phi}}
\newcommand{\nablabatch}[1]{\widehat{\nabla}_{\textup{mb}}^{#1}}
\newcommand{\widenabla}{\widehat{\nabla}^k}
\newcommand{\gammass}{\gamma_\textup{ss}}
\newcommand{\muss}{\mu_\textup{ss}}
\newcommand{\pss}{p_\textup{ss}}
\newcommand{\var}{\operatorname{Var}}
\newcommand{\nmb}{n_{\textup{mb}}}
\newcommand{\sigmamb}{\sigma_{\textup{mb}}}
\newcommand{\sigmambbar}{\bar{\sigma}_{\textup{mb}}}
\newcommand{\EXPT}[1]{\E\left[#1\right]}
\newcommand{\calF}{\mathcal{F}}
\newcommand{\id}{\mathop{}\mathopen{}\mathrm{Id}}
\newcommand{\softmax}{\operatorname{softmax}}

\newcolumntype{Y}{>{\centering\arraybackslash}X}

\setlength{\textfloatsep}{1\baselineskip plus  0.2\baselineskip minus  0.2\baselineskip}

\makeatletter
\def\raisedotfill{%
  \leavevmode
  \cleaders \hb@xt@ .44em{\hss\raise0.5ex\hbox{.}\hss}\hfill
  \kern\z@}
\makeatother

\ifarxivVersion
\else

\jmlrheading{25}{2024}{1-\pageref{LastPage}}{9/24}{9/24}{25-000}{He, Bolognani, D{\"o}rfler, Muehlebach}


\ShortHeadings{Decision-Dependent Stochastic Optimization}{He, Bolognani, D{\"o}rfler, Muehlebach}
\firstpageno{1}
\fi

\hyphenation{slightly}

\begin{document}
    \ifarxivVersion
    \title{\textbf{Decision-Dependent Stochastic Optimization: \\ The Role of Distribution Dynamics}}
        \author{Zhiyu He\textsuperscript{*,$\dagger$}, Saverio Bolognani\textsuperscript{$\dagger$}, Florian D{\"o}rfler\textsuperscript{$\dagger$}, Michael Muehlebach\textsuperscript{*}\\
        \date{}
        }
    \else
    \title{Decision-Dependent Stochastic Optimization: \\ The Role of Distribution Dynamics}
    \author{\name Zhiyu He \email zhiyu.he@tuebingen.mpg.de \\
           \addr Max Planck Institute for Intelligent Systems \\
           72076 T{\"u}bingen, Germany
           \AND
           \name Saverio Bolognani \email bsaverio@ethz.ch \\
           \addr Automatic Control Laboratory, ETH Z{\"u}rich \\
           8092 Z{\"u}rich, Switzerland
           \AND
           \name Florian D{\"o}rfler \email dorfler@ethz.ch \\
           \addr Automatic Control Laboratory, ETH Z{\"u}rich \\
           8092 Z{\"u}rich, Switzerland
           \AND
           \name Michael Muehlebach \email michael.muehlebach@tuebingen.mpg.de \\
           \addr Max Planck Institute for Intelligent Systems \\
           72076 T{\"u}bingen, Germany
           }

    \editor{My editor}
    \fi

    \maketitle
    \ifarxivVersion
        
        \def\thefootnote{}\footnotetext{$^*$Max Planck Institute for Intelligent Systems, Germany (\{zhiyu.he, michael.muehlebach\}@tuebingen.mpg.de). $^\dagger$Automatic Control Laboratory, ETH Z{\"u}rich, Switzerland (\{zhiyhe, bsaverio, dorfler\}@ethz.ch).}
        \setcounter{footnote}{0}
        \def\thefootnote{\arabic{footnote}}
    \fi

    \begin{abstract}
        Distribution shifts have long been regarded as troublesome external forces that a decision-maker should either counteract or conform to. An intriguing feedback phenomenon termed \emph{decision dependence} arises when the deployed decision affects the environment and alters the data-generating distribution. In the realm of performative prediction, this is encoded by distribution maps parameterized by decisions due to strategic behaviors. In contrast, we formalize an endogenous distribution shift as a feedback process featuring nonlinear dynamics that couple the evolving distribution with the decision. Stochastic optimization in this dynamic regime provides a fertile ground to examine the various roles played by dynamics in the composite problem structure. To this end, we develop an online algorithm that achieves optimal decision-making by both adapting to and shaping the dynamic distribution. 
        Throughout the paper, we adopt a distributional perspective and demonstrate how this view facilitates characterizations of distribution dynamics and the optimality and generalization performance of the proposed algorithm. We showcase the theoretical results in an opinion dynamics context, where an opportunistic party maximizes the affinity of a dynamic polarized population, and in a recommender system scenario, featuring performance optimization with discrete distributions in the probability simplex. 
    \end{abstract}

    \begin{keywords}
        Stochastic optimization, distribution shift, dynamics, gradient method, feedback loop. 
    \end{keywords}

\section{Introduction}\label{sec:introduction}
Modern decision-making problems in machine learning, operations research, and control often feature intrinsic randomness, streaming data, and large scales. At the heart of such decision-making pipelines is stochastic optimization, which incorporates random objective functions, constraints, and algorithms with random iterative updates \citep{shapiro2021lectures}. Stochastic optimization often leverages knowledge, estimates, or samples of data distributions to disentangle the complex coupling between randomness and decisions, achieve fast processing and adaptation, and navigate vast search spaces to arrive at optimal solutions.

Classical stochastic optimization assumes that the random variables in a problem obey some fixed distributions. In practice, however, distribution shifts are inevitable and can be both \emph{exogenous} and \emph{endogenous}. Exogenous distribution shifts are largely due to changing environmental conditions, e.g., parameter shifts in online estimation or an arbitrary new distribution selected by an adversary. In this regard, online stochastic optimization emphasizes adaptation by sequentially drawing new samples and adjusting decisions \citep{jiang2020online,cao2021online}.

Endogenous distribution shifts acknowledge the influence of a decision-maker on the data-generating distribution. This influence, i.e., \emph{decision dependence}, contributes to a \emph{closed loop}, whereby the decision and the data distribution interrelate in a repeated decision-making scenario. Various issues cause endogenous shifts and lead to different problem formulations, such as a reinforcement learning agent interacting with its environment, a dominant decision-maker being a price maker in a market, or a content recommender shaping user preferences, among others. In two-stage stochastic programming, models of how first-stage decisions alter distributions of random quantities in the second stage are discussed in \cite{hellemo2018decision}. Performative prediction \citep{perdomo2020performative,hardt2023performative} tackles optimization involving distributions in the form of a general map parameterized by decisions. These parameterized distributions are inspired by strategic behaviors, where individuals intentionally modify features as a response to the deployed predictive model. A predominant and a priori assumption is on Lipschitz distribution shifts, namely, a bounded change of decisions brings about a bounded change of the resulting distributions.

The aforementioned works largely capture decision dependence through (static) parameterized maps. In contrast, we address decision-making under endogenous distribution shifts represented by a broad class of nonlinear dynamics. This explicit formulation of distribution dynamics is motivated by the interactive feedback loop between a decision-maker and an evolving distribution, exemplified by problems in recommender systems \citep{dean2024accounting,lanzetti2023impact,chandrasekaran2024mitigating} and opinion dynamics \citep{proskurnikov2017tutorial}. Each individual random variable (representing feature, preference, or intrinsic uncertainty) follows latent dynamics coupling the historical value with the current decision. Through distribution dynamics, the decision affects the individual variable and, thus, the overall distribution. This type of decision dependence features a nonlinear mixture of sequential decisions and non-stationary distributions, which render the associated stochastic optimization problem challenging and fundamentally different from performative prediction. Our formulation is also closely aligned with the mean-field setting \citep{lauriere2022learning}. However, major differences exist in terms of the performance measure, the structure of decisions, and the specifications of dynamics and distributions, see \cref{subsec:literature} for more accounts.

By exploiting the structure of distribution dynamics, we will provide fine-grained analysis of distribution shifts, design iterative stochastic algorithms tailored to this dynamic setting, and establish guarantees of optimality and generalization in terms of this decision-dependent stochastic problem. Our design and analysis benefit from a distributional perspective, building connections between stochastic optimization, nonlinear control, and metric probability spaces.

\subsection{Motivations}\label{subsec:motivation}
We investigate optimal decision-making under endogenous distribution shifts with latent dynamics. These shifts arise from the dynamic interaction of a decision-maker and an evolving distribution. We present a motivating example in the domains of opinion dynamics.

\begin{figure}[!tb]
    \centering
    \includegraphics[width=0.75\columnwidth]{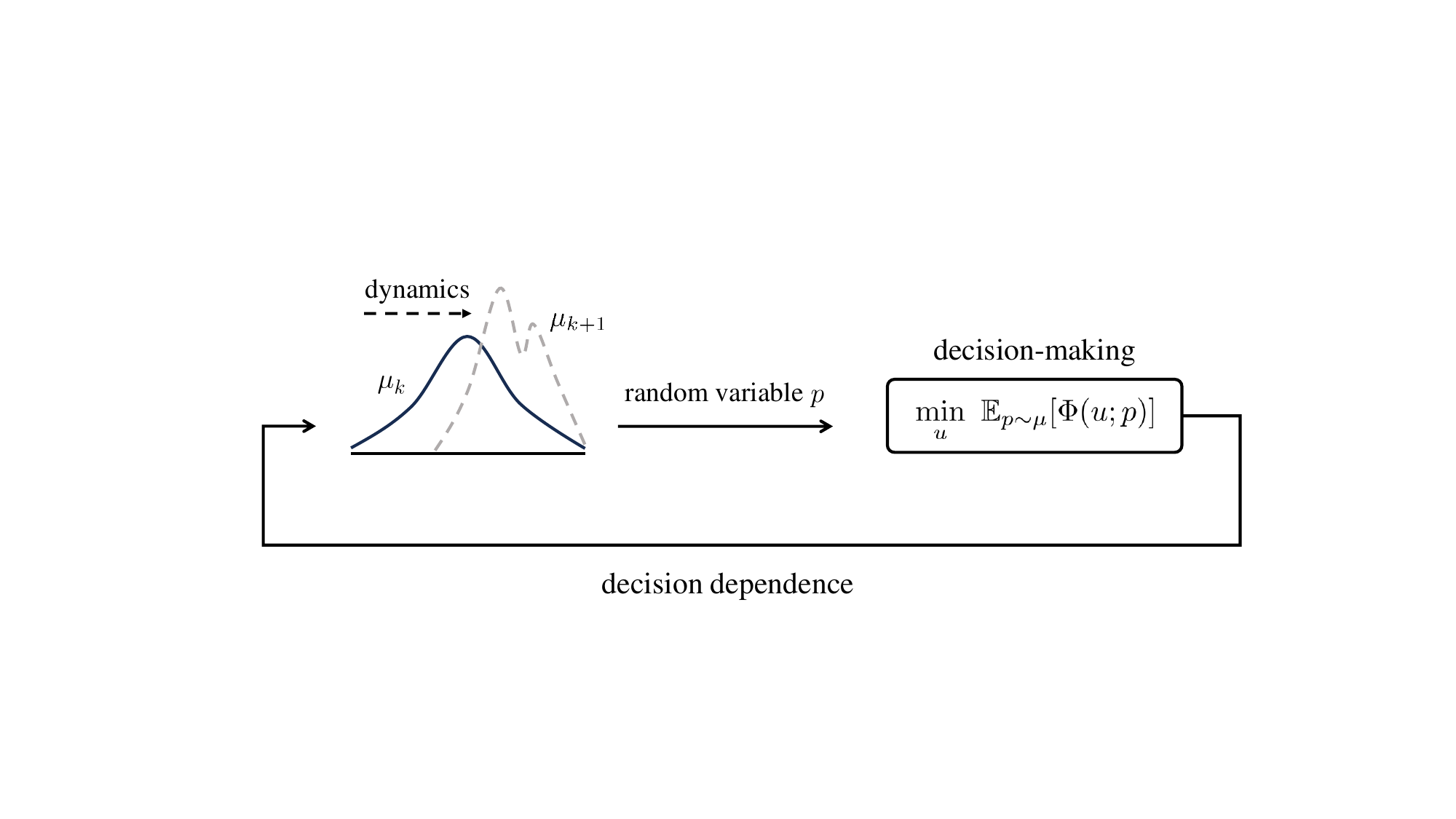}
    \caption{Stochastic optimization with decision dependence features a closed loop, involving endogenous distribution shifts from $\mu_k$ to $\mu_{k+1}$ due to the decision $u$ and dynamics. Here $\Phi$ is an objective function of the decision $u$ and the random variable $p$, see problem~\eqref{eq:dd_opt_problem} in \cref{subsec:prob_formulation} for a formal account.}
    \label{fig:illustration-cl}
\end{figure}


\begin{example}[Ideology tailored to a polarized population]\label{exm:political_position}
	Consider the interaction between a political party (or a candidate) and a large population. The party is opportunistic with the aim of gaining power by picking an ideology that aligns with the majority and grants it the most votes.

	Let the adopted ideology of the party and the position (or the preference state) of a random individual in the population at time $k$ be denoted by $q_k \in \mathbb{R}^m$ and $p_k \in \mathbb{R}^m$, respectively. Each coordinate of $q_k$ or $p_k$ indicates a liberal or conservative opinion on an agenda, e.g., taxes, health care, or immigration. The evolution of $p_k$ driven by $q_k$ is
	\begin{equation}\label{eq:opinion_dynamics_general}
		p_{k+1} = g(p_k,q_k,p_0), \qquad p_0 \sim \mu_d, \quad k \in \mathbb{N},
	\end{equation}
	where $g: \mathbb{R}^m \times \mathbb{R}^m \times \mathbb{R}^m \to \mathbb{R}$ specifies the dynamics, and $\mu_d$ is the distribution of the initial state. For instance, the classical Friedkin-Johnsen model \citep{proskurnikov2017tutorial} reads $p_{k+1} = \Lambda_1 W p_k + \Lambda_2 q_k + (I-\Lambda_1-\Lambda_2) p_0$, where $I \in \mathbb{R}^{m\times m}$ is the identity matrix, and $\Lambda_1,\Lambda_2,W \in \mathbb{R}^{m\times m}$ are weight matrices. The nonlinear polarized model \citep{hkazla2024geometric,gaitonde2021polarization} is $p_{k+1} \propto \lambda p_k + (1-\lambda) p_0 + \sigma (p_k^{\top} q_k)q_k$, and $p_k$ is always normalized, i.e., $\forall k, \|p_k\|=1$. At every time $k$, the party collects samples from the population and opportunistically adjusts the ideology $q_k$. To maximize the votes at an upcoming election, the steady-state population-wide affinity that the party intends to maximize is $\E_{\pss}[\pss^{\top} q]$, where $\pss$ satisfies the fixed-point equation $\pss = g(\pss,q,p_0)$ \citep{yang2020us,dean2022preference}. Since $\pss$ is hardly available, the party uses the current opinion distribution of $p_k$, estimated by polls (i.e., sampling), for decision-making.

	Due to the dynamics \eqref{eq:opinion_dynamics_general} and the picked ideology $q_{k-1}$, the distribution followed by $p_k$ changes constantly. The latter in turn affects the decision-making based on $p_k$ and causes a feedback loop, see \cref{fig:illustration-cl}. The classical paradigm of repeated sampling and retraining helps to adapt to this distribution shift. Nonetheless, this paradigm can suffer from sub-optimality because the dependence of the steady-state position $\pss$ on the decision $q$ is ignored. In \cref{sec:experiment}, we will review this motivating example and provide a detailed formulation, analysis, and numerical results.
\end{example}

Aligned with this motivating example, here we explore general stochastic optimization with endogenous distribution shifts arising from the interaction between a decision-maker and a dynamic distribution, see \cref{fig:illustration-cl} for an illustration. This setup involves several major challenges. First, the largely unknown distribution dynamics exclude an offline strategy based on the exact (re)formulation of the decision-making problem. Second, while the paradigm of repeated sampling and retraining in performative prediction facilitates adaptation (see \cref{subsec:literature}), achieving optimality beyond performative stability requires anticipating how the decision affects the distribution and applying proactive adjustments. Finally, the dynamic setup restricts us from sampling from the steady-state distribution corresponding to the decision, which is the \emph{very} distribution we care about while evaluating the overall performance. In this paper, we will address these challenges by developing and analyzing an online stochastic algorithm tailored to this dynamic setting.

\subsection{Contributions}
We are motivated by applications where a decision-maker drives a constantly evolving distribution and aims to optimize the distribution-level performance. We formulate a decision-dependent stochastic problem featuring endogenous distribution shifts with latent dynamics. For this general and dynamic setting, we adopt a distributional perspective at the intersection of stochastic optimization, nonlinear control, and metric probability spaces and present the following contributions.

\begin{itemize}
	\item We characterize the distribution shift via the contracting coefficient of distribution dynamics and the change of decisions. To this end, we use as the main metric the Wasserstein distance \citep{villani2009optimal} between the current distribution and its steady-state distribution induced by the decision. This metric plays a similar (albeit not necessarily the same) role as a Lyapunov function in control theory \citep{khalil2002nonlinear}. Although the exact value of such a Wasserstein distance can be elusive, the recursive inequality related to this metric sheds light on the dynamic evolution of the distribution.

	\item We propose an online stochastic algorithm that leverages samples from the current distribution and takes into account the composite structure of the problem due to dynamics. The iterative update direction consists of two terms. One term focuses on adaptation, and the most recent samples are exploited to adjust decisions. The other term actively shapes future distributions by anticipating the sensitivity of the distribution with respect to the decision, thereby informing optimal decision-making. Notably, our algorithm does not resort to a world model of the dynamic distribution; instead, the only adopted information is the so-called sensitivity, which is easily learnable in many scenarios.
	
	\item We establish optimality guarantees of the proposed algorithm in a nonconvex setting. In the face of dynamics and the composite structure, the convergence measure (i.e., the expected second moment of gradients) enjoys a favorable $\mathcal{O}(1/\sqrt{T})$ rate, where $T$ is the total number of iterations. This rate is as sharp as that of stochastic gradient descent for static nonconvex problems without decision dependence. Further, we provide high-probability convergence guarantees for a single run of the proposed algorithm. The key insight is to synthesize the coupled evolution of the aforementioned Wasserstein metric and the convergence measure, which correspond to the dynamic distribution and the iterative algorithm, respectively.

	\item We quantify the finite-sample generalization performance. We explore how the decisions obtained based on an empirical distribution with finite samples will generalize to the original distribution-level problem. We demonstrate that the generalization measure scales polynomially with the number of samples and the number of iterations, as well as polylogarithmically with the inverse of the failure probability. These results are built on the measure concentration argument and the characterization of the distribution shift, illustrating the benefits of our distributional perspective.

	\item We illustrate the aforementioned results with practical examples, where an opportunistic party maximizes the affinity of a polarized dynamic population, and a recommender optimizes performance by interacting with a user. In the first setup, the population is modeled as a continuous distribution, whereas in the second example, the user is represented by a discrete distribution evolving in the probability simplex. We demonstrate that respecting the composite problem structure due to decision dependence is crucial for achieving fast convergence and improved optimality.
\end{itemize}

\subsection{Related work}\label{subsec:literature}
A multitude of works investigate the roles of distribution shifts and decision dependence in machine learning, optimization, and control. We provide a concise review of their setups and foci.


In applications, data-generating distributions can change because of various factors, e.g., non-stationary environment, unknown covariate shifts, and adversarial effects. There are two predominant strategies for addressing distribution shifts. One strategy pursues robustness against potential perturbations to distributions. This falls under the umbrella of distributionally robust optimization \citep{duchi2021learning,kuhn2024Distributionally} and is achieved by optimizing the worst-case cost over a so-called ambiguity set, i.e., the family of distributions that are close to the true distribution under certain metrics. Further, in two-stage stochastic programming, the first-stage decision may change the uncertain distribution in the second stage, thereby producing a decision-dependent ambiguity set \citep{hellemo2018decision}. Tractable reformulations are derived in \cite{luo2020distributionally,basciftci2021distributionally} to disentangle this dependence and obtain robust solutions.

In the face of constantly evolving distributions, a less conservative and more active strategy is to seek adaptation. Specifically, online stochastic optimization investigates an iterative loop whereby a decision-maker commits a decision, receives samples from a dynamic distribution as feedback \citep{jiang2020online,cao2021online}, and then generates a new decision. Such an online framework is versatile and particularly suitable to tackle exogenous distribution shifts, which are out of the influence of a decision-maker. Nonetheless, in terms of optimization subject to endogenous distribution shifts caused by decisions (see also the beginning of \cref{sec:introduction}), unique phenomena (e.g., the existence of equilibria and stability issues) arise, requiring tailored methods for this closed-loop setting. We delineate some representative works as follows.

\emph{Performative prediction} studies optimization under decision-dependent distributions \citep{perdomo2020performative,hardt2023performative}. A canonical example is strategic classification, where an individual deliberately modifies her feature in reaction to the deployed classifier, thereby gaining a more favorable classification result. Such endogenous distribution shifts are usually formalized as static distribution maps parameterized by decisions. This setup leads to a novel equilibrium notion termed performative stability, meaning the decision optimizes the objective given the specific distribution it induces. Through repeated sampling and retraining, various stochastic algorithms converge to performatively stable points \citep{mendler2020stochastic,drusvyatskiy2023stochastic}. A stronger solution concept is performative optimality, requiring that the decision and the induced distribution together lead to an optimal objective value. Convergence to performatively optimal points is achieved with additional structural assumptions on the distribution map, e.g., it belongs to a linear location-scale family \citep{miller2021outside,jagadeesan2022regret,narang2023multiplayer,yan2024zero} or an exponential family \citep{izzo2021learn}. As a general, elegant, and tractable framework, performative prediction admits numerous extensions, including network scenarios with cooperative \citep{li2022multi} or competing agents \citep{narang2023multiplayer,piliouras2023multi}, time-varying objectives \citep{cutler2023stochastic,wood2022online}, saddle point minimax problems \citep{wood2023stochastic}, and coupling constraints \citep{yan2024zero}.

Our work is closely related to \emph{stateful performative prediction}, which captures historical dependence and considers an evolving distribution that gradually settles at the stationary distribution map. Models of historical dependence include geometrically ergodic Markov chains \citep{li2022state} and geometric decay responses in the form of linear mixture \citep{brown2022performative,ray2022decision}. With Lipschitz distribution shifts assumed in the first place, the algorithms therein converge to performatively stable (albeit not necessarily optimal) points. In contrast, we formalize a decision-dependent distribution shift featuring a broad class of nonlinear dynamics mixing continuous decisions and state distributions. Moreover, we explicitly characterize the distribution shift through the Wasserstein metric \citep{villani2009optimal} and link this shift with dynamics parameters and the change of decisions. We further establish convergence to locally optimal solutions given a nonconvex objective function involving the decision-dependent structure.

Along the line of \emph{performative decision-making}, some recent works study the role of dynamics in the decision-dependent problem setup. Performative reinforcement learning \citep{mandal2023performative} handles transition probability and reward functions relying on the deployed policy and finds a stable policy given cumulative rewards. Performative control in \citet{cai2024performative} addresses linear dynamics with policy-dependent state transition matrices and seeks a performatively stable control solution as a linear combination of states and disturbances. The framework of \cite{conger2024strategic} represents the dynamics of a strategic population via a gradient flow in the Wasserstein space. The interconnection of this strategic population and a decision-maker results in coupled partial differential equations, which admit asymptotic convergence to optimal solutions for convex (or concave) energy functionals. In contrast, we characterize the distribution shift represented by general nonlinear dynamics. Furthermore, we offer insights into anticipating the sensitivity of the distribution shift, taking into account the composite structure due to dynamics in the algorithmic design, and achieving (locally) optimal decision-making in the context of nonconvex objectives. The incorporation of anticipating sensitivity to actively shape distributions is the distinguishing feature of our algorithm compared to the aforementioned performative methods.

\emph{Mean-field formulation} abstracts the mutual influence in a vast homogeneous population by the interaction between a representative individual and an average density, i.e., the mean field \citep{caines2021mean}. This abstraction facilitates approximately solving an otherwise intractable multi-agent problem, wherein the joint states and actions may grow exponentially with the number of agents. The solutions to mean-field games or control are characterized by two coupled equations, namely, a forward equation for the evolving mean-field distribution and a backward equation associated with the individual value function. Classical approaches rely on the full knowledge of population dynamics and work under restrictive assumptions \citep{lauriere2022learning}. A recent trend is to apply reinforcement learning to learn models of dynamics \citep{huang2024model}, value functions, or policy functions \citep{cui2021approximately}, thereby obtaining Nash equilibrium or socially optimal policies. Different from the mean-field formulation, we examine a simplified case where individuals do not interact with one another. Nevertheless, we provide the following new insights. First, we require less information or learning effort related to distribution dynamics. Rather than constructing well-calibrated models, we only access steady-state sensitivity matrices corresponding to dynamics, which are easily learnable, see \cref{subsec:dynamics}. Second, we explicitly characterize the distribution shift via dynamics parameters and the change of decisions. Finally, instead of analyzing a cumulative cost and a state-feedback policy, we seek optimal steady-state performance and a general decision vector. For the generic setting with nonconvex objectives and nonlinear dynamics, we quantify the local optimality of the obtained solutions.

\emph{Control in probability spaces} addresses the formulation where the state of a system is a probability measure instead of a Euclidean vector \citep{chen2021optimal}. This formulation facilitates the characterization of the evolving uncertainties in a system or the collective behavior of a population, both of which are intrinsically modeled via probability distributions \citep{terpin2024dynamic,lanzetti2023impact}. A typical example is distribution steering, i.e., driving the state distribution from an initial density to a target density in finite time with minimum energy control \citep{chen2021optimal}. In this regard, tractable control strategies cross-fertilize insights from control theory (e.g., linear state-feedback structures) and optimal transport \citep[e.g., transport map calculations,][]{villani2009optimal}. In contrast, we search for an optimal decision vector rather than a state feedback control policy. Further, we do not aim for a specific final distribution. Instead, we hope the decision, together with the distribution induced through the nonlinear dynamics, will lead to an optimal steady-state behavior.

In a broader context, our work aligns with feedback optimization \citep{simonetto2020time,hauswirth2021optimization}, which implements optimization iterations as a feedback controller, thereby regulating the steady-state behavior of a dynamical system. Nonetheless, in this paper we are concerned with distribution-level characterizations in a metric space of probability measures, which is drastically different from the system-theoretic analysis of feedback optimization in Euclidean space.

\vspace*{1em}

In summary, we capture the distribution shift arising from the dynamic evolution of a distribution driven by a decision-maker. Such dynamics are represented by a broad class of nonlinear equations encompassing continuous states and decisions. We propose and characterize an online stochastic algorithm that respects the composite problem structure due to dynamics, regulates the distribution flow, and yields locally optimal solutions to the overall nonconvex problem featuring decision dependence.

The remainder of this paper is structured as follows. \cref{sec:formulation} introduces preliminaries of the metric probability space and formulates the stochastic optimization problem involving dynamic decision-dependent distributions. \cref{sec:design} presents the intuitions and design of our online stochastic algorithm. In \cref{sec:analysis}, we characterize the distribution dynamics and establish guarantees on the optimality and generalization performance of the proposed algorithm. \cref{sec:experiment} showcases an application in affinity maximization with a polarized population following \cref{exm:political_position}, as well as another case study in a recommender system context involving discrete distributions. Finally, \cref{sec:conclusion} concludes this paper and discusses future directions. All proofs are provided in the appendix.

\section{Preliminaries and Problem Formulation}\label{sec:formulation}
\subsection{Metric space of probability measures}
We review the background of a metric probability space and refer the readers to \cite{villani2009optimal} for more details. Let $\mathcal{P}(\mathbb{R}^m)$ be the space of Borel probability distributions on $\mathbb{R}^m$. Let $\mathcal{P}_1(\mathbb{R}^m) \triangleq \big\{\mu \in \mathcal{P}(\mathbb{R}^m): \int_{\mathbb{R}^m} \|x\| \ud \mu(x) < \infty \big\}$ be the space of distributions with finite absolute moments. The Dirac mass at point $x \in \mathbb{R}^m$ is denoted by $\delta_x$, i.e., for any Borel set $A \subseteq \mathbb{R}^m$, $\delta_x(A) = 1$ if $x \in A$ and $\delta_x(A) = 0$ otherwise. We use $X \sim \mu$ to indicate that a random variable $X$ is distributed according to $\mu$. 
The convolution of two distributions $\mu, \nu \in \mathcal{P}_1(\mathbb{R}^m)$ is denoted by $\mu * \nu$. Specifically, if two random variables $X$ and $Y$ are independent and distributed according to $\mu$ and $\nu$, respectively, then $X+Y \sim \mu * \nu$.
The pushforward of a distribution $\mu$ via a Borel map $f: \mathbb{R}^{r} \to \mathbb{R}^m$ is represented by $f_{\#}\mu$, where $(f_{\#} \mu)[A] \triangleq \mu[f^{-1}(A)]$ for every Borel set $A \subseteq \mathbb{R}^m$. In fact, if $X \sim \mu$, then $f(X) \sim f_{\#} \mu$. The identity map is $\id$. 

Let $\|z\|_P = \sqrt{z^\top P z}$ denote the weighted norm of a vector $z \in \mathbb{R}^m$, where $P \in \mathbb{R}^{m\times m}$ is positive definite. Consider a metric space $(\mathbb{R}^m, c)$ endowed with a continuous metric $c : \mathbb{R}^m \times \mathbb{R}^m \to \mathbb{R}_{\geq 0}$. Typical examples of $c$ include the Euclidean distance $c(x,y) = \|x-y\|$, the weighted distance $c(x,y) = \|x-y\|_P$, and other distances defined by composite norms, where $x,y \in \mathbb{R}^m$. The type-$1$ Wasserstein distance $W_1(\mu,\nu)$ between two distributions $\mu, \nu \sim \mathcal{P}_1(\mathbb{R}^m) $ on $(\mathbb{R}^m, c)$ is
\begin{equation}\label{eq:W1_dist}
	W_1(\mu,\nu) = \inf_{\gamma \in \Gamma(\mu,\nu)} \int_{\mathbb{R}^m \times \mathbb{R}^m} c(x,y) \ud \gamma(x,y),
\end{equation}
where $\Gamma(\mu,\nu)$ is the set of all joint distributions (i.e., couplings) with marginals $\mu$ and $\nu$, see \citet[Definition~6.1]{villani2009optimal}. Intuitively, the Wasserstein distance is the minimum cost of transporting $\mu$ onto $\nu$, where the cost of moving a unit mass from $x$ to $y$ is $c(x,y)$, and the available transport plan is represented by $\gamma$. The Wasserstein distance is a flexible and quantitative measure of the discrepancy between distributions, particularly when they have disjoint supports, such as when one distribution is continuous and the other is discrete.

\subsection{Distribution dynamics}\label{subsec:dynamics}
We generalize the motivating case study in \cref{exm:political_position} and consider the following distribution dynamics with continuous states
\begin{equation}\label{eq:pop_dynamics}
	p_k = f(p_{k-1}, u_k, d),  \qquad p_0 \sim \mu_0, ~ d \sim \mu_d,~ (p_0,d) \sim \alpha, \quad k \in \mathbb{N}_{+}.
\end{equation}
In \eqref{eq:pop_dynamics}, $p_k \in \mathbb{R}^m$ is a random state at time $k$ distributed according to $\mu_k$, i.e., $p_k \sim \mu_k$. The initial state $p_0$ satisfies the distribution $\mu_0 \in \mathcal{P}_1(\mathbb{R}^m)$. Further, $u \in \mathbb{R}^n$ is a decision (or an input) that influences each random state, and $d \in \mathbb{R}^r$ following the distribution $\mu_d \in \mathcal{P}_1(\mathbb{R}^r)$ is an exogenous input that remains constant during iterations. For instance, $d$ can be a bias term (similar to the initial position in \cref{exm:political_position}), a disturbance, or a random parameter in a model of $f$. Each pair $(p_0,d)$ is independently drawn from the joint distribution $\alpha$, and the first and the second marginals of $\alpha$ are $\mu_0$ and $\mu_d$, respectively. For instance, if $p_0$ and $d$ as well as $\mu_0$ and $\mu_d$ are the same (c.f.~\cref{exm:political_position}), then $\alpha = (\id, \id)_{\#} \mu_0$; if $p_0$ and $d$ are independent, then $\alpha$ is the product measure $\mu_0 \times \mu_d$.


The distribution dynamics \eqref{eq:pop_dynamics} feature decision dependence, in that the evolution of the distribution $\mu_k$ is driven by the decision $u_k$. The status of this distribution will in turn determine the optimal decision for an optimal distribution-level behavior. Before we present the formal problem description, we specify some properties related to the dynamics \eqref{eq:pop_dynamics}. All these properties serve the purpose of characterization and analysis. Our online algorithm does not resort to the model $f$ of the distribution dynamics; rather, it leverages samples and certain learnable \emph{sensitivities} related to \eqref{eq:pop_dynamics}.

\begin{assumption}\label{assump:stable_system}
	The function $f(p,u,d)$ is continuously differentiable, $L_f^p$-Lipschitz continuous in $p$ with respect to the weighted norm $\|\cdot\|_P$, where $P \in \mathbb{R}^{m\times m}$ is positive definite, $L_f^u$-Lipschitz continuous in $u$ with respect to $\|\cdot\|$, and $L_f^d$-Lipschitz continuous in $d$ with respect to $\|\cdot\|$. Here, $L_f^p \in (0,1)$, $L_f^u > 0$, and $L_f^d > 0$. There exists a continuously differentiable steady-state map $h: \mathbb{R}^n \times \mathbb{R}^r \to \mathbb{R}^m$ such that $h(u,d) = f(h(u,d),u,d)$. Furthermore, $\nabla_u h(u,d)$ is $M_h^d$-Lipschitz in $d$ with respect to $\|\cdot\|$.
\end{assumption}

\cref{assump:stable_system} implies that the dynamics \eqref{eq:pop_dynamics} are contracting in $p$ with respect to the weighted norm $\|\cdot\|_P$. The existence and conditioning of $P$ (i.e., the so-called contraction metric) are known for incrementally exponentially stable nonlinear dynamics \citep{tsukamoto2021contraction}. Based on the Lipschitz conditions of $f$, the Banach contraction theorem ensures that for a fixed input $u$ (i.e., $u_k=u,\forall k \in \mathbb{N}_{+}$) and a specific exogenous input $d$, the dynamics \eqref{eq:pop_dynamics} admit a unique steady state $\pss = h(u,d)$ satisfying $\pss = f(\pss,u,d)$. We can further establish that $h$ is $L_h^u$-Lipschitz in $u$ and $L_h^d$-Lipschitz in $d$ with respect to $\|\cdot\|$, where $L_h^u = L_f^u\sqrt{\lambda_{\max}(P)/\lambda_{\min}(P)}/(1-L_f^p)$ and $L_h^d = L_f^d\sqrt{\lambda_{\max}(P)/\lambda_{\min}(P)}/(1-L_f^p)$, see the parametric contraction mapping principle \citep[Theorem~1A.4]{dontchev2009implicit} and also \cref{lem:weighted_norm} in \cref{app:lemmas}. The Lipschitz continuity of $\nabla_u h(u,d)$ (i.e., the so-called sensitivity matrix) can be satisfied when $f$ has bounded Hessians. As we will see in \cref{exm:linear_contraction} below, stable linear dynamics naturally satisfy \cref{assump:stable_system}.

From a distribution-level perspective, all the steady-state samples $p_\textup{ss}$ satisfy the following distribution $\muss(u)$ that depends on $u$ and $\mu_d$
\begin{equation}\label{eq:steady_state_dist}
  	\muss(u) = h(u,\cdot)_{\#} \mu_d,
\end{equation}
where $h(u,\cdot)_{\#} \mu_d$ denotes the pushforward of the distribution $\mu_d$ via a Borel map $h(u,\cdot): \mathbb{R}^r \to \mathbb{R}^m$ parameterized by the decision $u$. 
Let $\nabla_u h(u,d) \in \mathbb{R}^{n\times m}$ be the steady-state sensitivity matrix of $p_\textup{ss} = h(u,d)$ with respect to the decision $u$. It follows from the implicit function theorem \citep[Theorem~1B.1]{dontchev2009implicit} that
\begin{equation}\label{eq:sens_formula}
	\nabla_u h(u,d) = -\nabla_u f(p_\textup{ss},u,d)\left[\nabla_p f(p_\textup{ss},u,d) - I\right]^{-1},
\end{equation}
which holds in an open neighborhood of $(u,d)$. The sensitivity $\nabla_u h(u,d)$ quantifies the rate of change of the steady-state sample $p_\textup{ss}$ with respect to the decision $u$. While \eqref{eq:sens_formula} may seem daunting at first glance, the sensitivity can be simplified in various scenarios. For instance, when the dynamics~\eqref{eq:pop_dynamics} are linear, the corresponding sensitivity becomes a constant matrix, see \cref{exm:linear_contraction} below. If the exogenous input $d$ is additive in \eqref{eq:pop_dynamics}, then the detailed form of \eqref{eq:sens_formula} no longer involves $d$. More broadly, apart from invoking \eqref{eq:sens_formula} based on the related knowledge of dynamics and parameters, we can exploit recursive estimation or identification techniques to construct (approximate) sensitivities, see \citet[Sec.~3.3.1]{hauswirth2021optimization} in the context of widely adopted feedback optimization methods. 
Such a learnable sensitivity is the only model information on \eqref{eq:pop_dynamics} used in our online algorithm. All the characterizations in \cref{assump:stable_system} are for the sake of analysis, and the full world model (i.e., $f$) of the distribution dynamics is not required.

In the following example, we review an important special case of \eqref{eq:pop_dynamics}, where the dynamics function is linear. We will see how \cref{assump:stable_system} is justified and provide explicit expressions of the distribution $\mu_k$ at time $k$ and the steady-state distribution $\muss(u)$.

\begin{example}[Linear distribution dynamics]\label{exm:linear_contraction}
	Suppose that each sample evolves by a linear dynamics equation $p_k = f(p_{k-1},u_k,d) = A p_{k-1} + B u_k + Ed$, where $A \in \mathbb{R}^{m \times m}$, $B \in \mathbb{R}^{m\times n}$, $E \in \mathbb{R}^{m\times r}$, and $p_0 \sim \mu_0, d \sim \mu_d$. Then, \cref{assump:stable_system} is satisfied if $A$ is Schur stable, i.e., $\rho(A) < 1$. 
	Given a Schur stable matrix $A$ and a positive definite $Q \in \mathbb{R}^{m\times m}$, there exists a unique positive definite matrix $P \in \mathbb{R}^{m\times m}$ satisfying the Lyapunov equation $A^{\top}PA - P + Q = 0$ \citep{khalil2002nonlinear}. Let $\lambda_{\min}(Q) > 0$ and $\lambda_{\max}(P) > 0$ denote the minimum eigenvalue of $Q$ and the maximum eigenvalue of $P$, respectively. With the metric $\|\cdot\|_P$, for any $p, \bar{p} \in \mathbb{R}^m$, $u \in \mathbb{R}^n$, and $d \in \mathbb{R}^r$,
	\begin{align*}
		\|f(p,u,d) - f(\bar{p},u,d)\|_P &= \|A(p - \bar{p})\|_P \stackrel{\text{(a.1)}}{=} \sqrt{(p-\bar{p})^\top (P-Q) (p-\bar{p})} \\
			&\stackrel{\text{(a.2)}}{\leq} \underbrace{\sqrt{1-\frac{\lambda_{\min}(Q)}{\lambda_{\max}(P)}}}_{\in (0,1)} \|p - \bar{p}\|_P,
	\end{align*}
	where (a.1) follows from the aforementioned Lyapunov equation. In (a.2), we use
	\begin{equation*}
		\|x\|_Q \geq \sqrt{\lambda_{\min}(Q)} \|x\| \geq \sqrt{\frac{\lambda_{\min}(Q)}{\lambda_{\max}(P)}} \|x\|_P, \qquad \forall x \in \mathbb{R}^m,
	\end{equation*}
	see also \cref{lem:weighted_norm} in \cref{app:lemmas}. Moreover, since $Q-P = -A^\top PA$ is negative definite, we know $\lambda_{\min}(Q) I \prec Q \prec P \prec \lambda_{\max}(P)I$, and therefore $\lambda_{\min}(Q)/\lambda_{\max}(P) \in (0,1)$. Further, for any $p \in \mathbb{R}^m$, $u, \bar{u} \in \mathbb{R}^n$, and $d \in \mathbb{R}^r$,
	\begin{equation*}
		\|f(p,u,d) - f(p,\bar{u},d)\| = \|B(u - \bar{u})\| \leq \|B\| \, \|u - \bar{u}\|.
	\end{equation*}
	When the decision $u$ is fixed (i.e., $u_k=u,\forall k\in \mathbb{N}_{+}$), the steady-state sample $\pss$ is $\pss = h(u,d) = (I-A)^{-1}(B u + E d)$, and the map $h$ is continuously differentiable. The steady-state sensitivity matrix is $\nabla_u h(u,d) = [(I-A)^{-1}B]^\top$, which is constant and independent of $u$ and $d$. In applications, this sensitivity (in engineering lingo called the zero-frequency gain) can be learned from data of decisions and samples \citep[Sec.~3.3.1]{hauswirth2021optimization}. Hence, \cref{assump:stable_system} is satisfied.

	Similar to \citet[Proposition~18]{aolaritei2022uncertainty}, the transient distribution $\mu_k$ of \eqref{eq:pop_dynamics} is
	\begin{equation*}
		\mu_k = \left(A^k x \right)_{\#} \mu_0 * \left({\textstyle \sum_{i=0}^{k-1}A^i B u_{k-i} + \sum_{i=0}^{k-1} A^i E x}\right)_{\#} \mu_d,
	\end{equation*}
	where $(A^k x)_{\#} \mu_0$ denotes the pushforward of $\mu_0$ via the map $f(x) = A^k x$, and a similar definition holds for the other term after convolution.
	The steady-state distribution $\muss$ for a fixed decision $u$ is
	\begin{equation*}
		\muss(u) = \left((I-A)^{-1}E x + (I-A)^{-1}B u\right)_{\#} \mu_d.
	\end{equation*}
\end{example}

\subsection{Problem formulation}\label{subsec:prob_formulation}
We aim to find a decision $u$ that optimizes the steady-state behavior of the dynamic distribution \eqref{eq:pop_dynamics}:
\begin{equation}\label{eq:dd_opt_problem}
	\begin{split}
		\min_{u\in \mathbb{R}^n} \quad& \E_{p \sim \muss(u)}[\Phi(u,p)] \\
		\textup{s.t.} \quad& \muss(u) = h(u,\cdot)_{\#} \mu_d,
	\end{split}
\end{equation}
where $\muss(u)$ is the steady-state distribution of \eqref{eq:pop_dynamics} induced by $u$, see also \eqref{eq:steady_state_dist}. Let the reduced objective function of problem~\eqref{eq:dd_opt_problem} be denoted by
\begin{equation}\label{eq:dd_opt_reduced_obj}
	\obj(u) \triangleq \E_{p \sim \muss(u)}[\Phi(u,p)] = \E_{d \sim \mu_d}[\Phi(u,h(u,d))]. 
\end{equation}
The decision-dependent problem~\eqref{eq:dd_opt_problem} formalizes the steady state of the closed loop illustrated by \cref{fig:illustration-cl} in \cref{subsec:motivation}. This problem is relevant in many scenarios with a vast population or intrinsic uncertainties, e.g., voting and recommender systems.

Problem~\eqref{eq:dd_opt_problem} involves several major challenges in terms of the nonconvex objective and the unknown dynamics underlying the decision-dependent distribution. 
\begin{itemize}
	\item First, the distribution dynamics \eqref{eq:pop_dynamics} induce the steady-state distribution $\muss(u)$ and further bring about a composite structure in problem~\eqref{eq:dd_opt_problem}. Even for a convex function $\Phi$, since the steady-state map $h(u,d)$ is nonlinear, the overall objective $\obj(u)$ can be nonconvex.

	\item Another challenge originates from the unknown distribution dynamics, which render the structure of decision dependence elusive and preclude an offline numerical scheme based on the exact (re)formulation of problem~\eqref{eq:dd_opt_problem}.

	\item Finally, we cannot directly sample from the steady state distribution $\muss(u)$ unless we wait sufficiently long, because the distribution $\mu_k$ is constantly changing with time $k$ and eventually approaches $\muss(u_k)$.
\end{itemize}
To overcome the above challenges, we will propose an online stochastic algorithm in \cref{sec:design} that samples from the current distribution $\mu_k$ and regulates the distribution shift by anticipating its sensitivity with respect to the decision.

We focus on steady-state performance due to relevance, generality, and tractability. First, in many problems an optimal steady state matters more than transients, with the latter often not even being modeled, see for instance in many case studies where feedback optimization or performative prediction is applied. This setup will also allow us to circumvent the need for model knowledge by exploiting distribution sensitivity, which is easier to learn. Second, if we analyze the behavior of a stable and dynamic distribution over a sufficiently long horizon, then the average performance metric (i.e., the cumulative objective values divided by the number of iterations) essentially converges to the steady-state objective. Finally, in this context we can go beyond the assumptions and policy classes considered in the mean-field literature \citep{lauriere2022learning} and establish provable guarantees for continuous decision vectors given a broad class of nonlinear dynamics and nonconvex objective functions. Specifically, we make the following assumptions on the objective.
\begin{assumption}\label{assump:obj_property}
	The objective $\obj(u)$ is well defined (i.e., $\E_{d \sim \mu_d}[\Phi(u,h(u,d))] \!<\! \infty$), $L_{\obj}$-smooth (with $L_{\obj}$-Lipschitz gradients), and bounded below. The function $\Phi(u,p)$ is $L_\Phi^p$-Lipschitz in $p$. The partial gradients $\nabla_u \Phi(u,p)$ and $\nabla_p \Phi(u,p)$ are $M_\Phi^u$-Lipschitz and $M_\Phi^p$-Lipschitz in $p$, respectively.
\end{assumption}

\cref{assump:obj_property} requires a well-defined expectation function, which is common in stochastic optimization \citep[see][Section~9.2.5]{shapiro2021lectures} and can be satisfied, e.g., when for each $u$, $\Phi(u,h(u,d))$ is dominated by an integrable function of $d$. The smoothness condition is also standard \citep[see][]{bottou2018optimization} and holds, e.g., when for every stochastic sample $p_\textup{ss}=h(u,d)$, the objective $\Phi(u,p_\textup{ss}) = \Phi(u,h(u,d))$ is $L_{\obj}$-smooth and the second moment of $L_{\obj}$ is bounded. The requirement that the partial gradients are Lipschitz in the random variable $p$ is related to (though not the same as) the joint smoothness property used in performative prediction \citep{perdomo2020performative,mendler2020stochastic}. Moreover, the condition that $\obj(u)$ is bounded below implies that problem~\eqref{eq:dd_opt_problem} admits a finite optimal value $\obj^* \in \mathbb{R}$.

\begin{assumption}\label{assump:obj_rand_Lipschitz}
	There exists a positive random variable $L(d)$ such that $\mathbb{E}_{d \sim \mu_d}[L(d)] < \infty$, and that for all $u_1,u_2 \in \mathbb{R}^n$, $|\Phi(u_1,h(u_1,d)) - \Phi(u_2,h(u_2,d))| \leq L(d) \|u_1 - u_2\|$.
\end{assumption}

\cref{assump:obj_rand_Lipschitz} is similar to \citet[Eq.~(9.130)]{shapiro2021lectures}. It is a sufficient condition for the interchangeability of the expectation and gradient operators, see \citet[Theorem~9.56]{shapiro2021lectures} and also \eqref{eq:ss_distr_stoch_grad} in \cref{subsec:sg_intuition} below. If the function $\Phi(u,h(u,d))$ is Lipschitz continuous in $u$, then \cref{assump:obj_rand_Lipschitz} holds naturally.

\section{Online Stochastic Decision-Making}\label{sec:design}
We present our online stochastic algorithm for solving the decision-dependent problem~\eqref{eq:dd_opt_problem}. The main challenges stem from the non-stationary and largely unknown distribution due to the dynamics \eqref{eq:pop_dynamics} and the decisions $(u_k)_{k\in \mathbb{N}_{+}}$. To disentangle the complexity associated with distributions, we leverage samples drawn from the current transient distribution as informative characterizations and feedback. Motivated by the composite structure of the objective, we further enhance the algorithmic update with a term that proactively anticipates and shapes the dynamic distribution based on its sensitivity. Thus, we ensure local optimality of solutions even for nonconvex problems.



\subsection{Intuition of the stochastic gradient}\label{subsec:sg_intuition}
We provide the intuition of constructing appropriate stochastic gradients for online decision-making as per \eqref{eq:dd_opt_problem}.
Ideally, we aim to obtain the gradient of the objective $\obj$ at $u_k$, i.e.,
\begin{align}\label{eq:ss_distr_stoch_grad}
	\nabla \obj(u_k) =& \nabla \E_{p \sim \muss(u_k)} [\Phi(u_k,p)] \notag \\
		=& \nabla \E_{d \sim \mu_d} [\Phi(u_k,h(u_k,d))] \notag \\
		\stackrel{\textup{(a.1)}}{=}& \E_{d \sim \mu_d} [\nabla \Phi(u_k,h(u_k,d))] \notag \\
	  	\stackrel{\textup{(a.2)}}{=}& \E_{d \sim \mu_d} \left[\nabla_u \Phi(u_k,h(u_k,d)) + \nabla_u h(u_k,d) \nabla_p \Phi(u_k,p)|_{p=h(u_k,d)} \right] \notag \\
	  	\stackrel{\textup{(a.3)}}{=}& \E_{(p,d) \sim \gammass(u_k)}[\nabla_u \Phi(u_k,p) + \nabla_u h(u_k,d) \nabla_p \Phi(u_k,p)],
\end{align}
where (a.1) leverages interchangeability of the expectation and gradient operators thanks to \cref{assump:obj_rand_Lipschitz}, see also \citet[Theorem~9.56]{shapiro2021lectures}; (a.2) uses the law of the total derivative; (a.3) involves expectation with respect to the joint distribution $\gammass(u_k) \triangleq (h(u_k,\cdot), \id)_{\#} \mu_d$ of the steady-state variable $p \sim \muss(u_k) = h(u_k,\cdot)_{\#} \mu_d$ and the exogenous input $d \sim \mu_d$. 

However, the exact gradient $\nabla \obj(u_k)$ can be difficult to calculate for two reasons. First, it involves the expectation with respect to the random variable $p$, although in practice we can only access finite samples of the distribution. Second, since each sample needs a few iterations to approach its steady state, the steady-state distribution $\muss(u_k)$ of $p$ corresponding to the decision $u_k$ is unavailable at the current time $k$.

To address these issues, we use samples drawn from the current distribution $\gamma_k$ to construct a mini-batch stochastic gradient, thereby informing decision-making. The intuition is that $\gamma_k$, as reflected by these samples, serves as a reasonable proxy for the steady-state distribution $\gammass(u_k)$, provided that neighboring decisions (i.e., $u_{k-1}$ and $u_k$) are close and the iteration counter $k$ is large. Consequently, the expected gradient involving $\gamma_k$ also becomes a close approximation of the exact gradient $\nabla \obj(u_k)$ entailing $\gammass(u_k)$. We will formalize this intuition in \cref{subsec:distr_shift}.


\subsection{Algorithmic design}\label{subsec:alg_design}
Guided by the aforementioned intuition, our online stochastic algorithm for solving problem~\eqref{eq:dd_opt_problem} is
\begin{equation}\label{eq:stochastic_alg}
	u_{k+1} = u_k - \eta \nablabatch{k} \obj(u_k), \qquad k \in \mathbb{N},
\end{equation}
where $\eta > 0$ is a constant step size, $k$ is the iteration counter, and $\nablabatch{k} \obj(u_k)$ is a stochastic gradient based on mini-batches, i.e.,
\begin{equation}\label{eq:batch_stoch_grad}
	  \nablabatch{k} \obj(u_k) \triangleq \frac{1}{\nmb} \sum_{i=1}^{\nmb} \Big( 
	   \underbrace{\nabla_u \Phi(u_k,p_k^i)}_{\numcircled{1}} + \underbrace{\nabla_u h(u_k,d^i) \nabla_p \Phi(u_k,p_k^i)}_{\numcircled{2}} \Big).
\end{equation}
In \eqref{eq:batch_stoch_grad}, $\nmb \in \mathbb{N}_{+}$ is the size of the mini-batch, and $\nabla_u h(u_k,d^i)$ is the steady-state sensitivity of $p^i_\textup{ss}$ with respect to the decision $u_k$. Such a sensitivity admits various simplifications and can often be learned from data in practice, see the discussion in \cref{subsec:dynamics}. For instance, the sensitivity is a constant matrix given linear dynamics and does not involve $d^i$ if the exogenous input is additive. Further, $p_k^1, \ldots, p_k^{\nmb}$ are samples drawn from the transient distribution $\mu_k$ at time $k$. These samples rely on the decisions owing to the dynamics \eqref{eq:pop_dynamics}, thereby causing a composite structure in the objective $\Phi(u,p)$.

In essence, the mini-batch stochastic gradient \eqref{eq:batch_stoch_grad} is a finite-sample approximation of 
\begin{equation}\label{eq:cur_distr_stoch_grad}
	\widenabla \obj(u_k) = \E_{(p,d) \sim \gamma_k}[\nabla_u \Phi(u_k,p) + \nabla_u h(u_k,d) \nabla_p \Phi(u_k,p)],
\end{equation}
i.e., the approximate expected gradient at $u_k$ when $p$ and $d$ satisfy the joint distribution $\gamma_k$. Specifically, $\gamma_k = (f^{(k)}(x,y),y)_{\#}\alpha(x,y)$ denotes the pushforward of $\alpha$ in \eqref{eq:pop_dynamics}. Further, $f^{(k)}(p_0,d)$ is the value of $p_k$ given a pair of the initial state $p_0$ and the exogenous input $d$ sampled from the joint distribution $\alpha$, a sequence of decisions $(u_i)_{i=1,\ldots,k}$, and the dynamics~\eqref{eq:pop_dynamics}. That is, $f^{(k)}: \mathbb{R}^m \times \mathbb{R}^r \to \mathbb{R}^m$ is a map parameterized by $u_1,\ldots,u_k$. We define the special case $f^{(0)}(p_0,d)$ as $p_0$. We will show in \cref{sec:analysis} that with suitable algorithmic parameters, $\nablabatch{k} \obj(u_k)$ and $\widenabla \obj(u_k)$ are close to $\nabla \obj(u_k)$, enabling the stochastic algorithm \eqref{eq:stochastic_alg} to yield (locally) optimal solutions.


The mini-batch stochastic gradient $\nablabatch{k} \obj(u_k)$ involves two terms. Term \numcircled{1} in \eqref{eq:batch_stoch_grad} leverages the current sample $p_k^i$ to construct a partial gradient with respect to $u$. Term \numcircled{2} in \eqref{eq:batch_stoch_grad} anticipates how the decision $u_k$ will influence the sample $p$ (and in turn the objective) and uses this link for achieving optimality. As explained in \cref{subsec:sg_intuition}, these two terms result from the aforementioned composite structure of $\Phi(u,p)$ and the law of the total derivative. While the update rule \eqref{eq:stochastic_alg} seems obvious from our presentation, most related online methods do not use an anticipating term as \numcircled{2} in \eqref{eq:batch_stoch_grad}, as discussed in \cref{subsec:literature}.

\subsection{Properties of the stochastic gradient}\label{subsec:sg_property}
Let $\calF_k$ be the $\sigma$-algebra generated by the random variables $\nablabatch{0} \obj(u_0), \ldots, \nablabatch{k}\obj(u_k)$. Hence, the decision $u_k$ is measurable with respect to $\calF_{k-1}$. We make the following assumption on the variance of the stochastic gradient constructed from an individual sample. For a stochastic vector $\xi \in \mathbb{R}^n$, we define its variance as $\var [\xi] \triangleq \E\!\left[\|\xi - \E[\xi]\|^2\right] = \E[\|\xi\|^2] - \left\|\E[\xi]\right\|^2$.
\begin{assumption}\label{assump:var_grad}
	The stochastic gradient satisfies
	\begin{equation*}
		\var \left[\nabla_u \Phi(u_k,p_k^i) \!+\! \nabla_u h(u_k,d^i)\nabla_p \Phi(u_k,p_k^i) \big| \calF_{k-1} \right] \leq M \!+\! M_V \|\widenabla \obj(u_k)\|^2, \quad \forall i = 1,\ldots,\nmb,
	\end{equation*}
	where $M, M_V \geq 0$ are constants.
\end{assumption}


\cref{assump:var_grad} is a standard and relatively weak statement that the variance of the stochastic gradient is restricted \citep{bottou2018optimization}. It implies that this variance can be nonzero at the point where $\widenabla \obj(u_k)$ equals zero and grows at most quadratically in the norm of $\widenabla \obj(u_k)$.

Given the dynamics~\eqref{eq:pop_dynamics} and independent pairs $(p_0^1,d^1), \ldots, (p_0^{\nmb},d^{\nmb})$ of initial states and exogenous inputs, when conditioned on $\calF_{k-1}$, the samples $p_k^1, \ldots, p_k^{\nmb}$ collected at time $k$ are still independent. Building on this observation, we specify some key properties of the mini-batch gradient $\nablabatch{k} \obj(u_k)$ in the lemma below.

\begin{lemma}\label{lem:mini_batch_grad}
	Given the dynamics~\eqref{eq:pop_dynamics} and independent sample pairs $(p_0^1,d^1), \allowbreak \ldots, \allowbreak (p_0^{\nmb},d^{\nmb})$ of initial states and exogenous inputs, $\nablabatch{k} \obj(u_k)$ in \eqref{eq:batch_stoch_grad} is an unbiased estimate of $\widenabla \obj(u_k)$, i.e.,
	\begin{equation}\label{eq:unbiased_batch_grad}
		\EXPT{\nablabatch{k} \obj(u_k) \big| \calF_{k-1}} = \widenabla \obj(u_k).
	\end{equation}
	Moreover, if \cref{assump:var_grad} holds, then the expected second moment of $\nablabatch{k} \obj(u_k)$ is bounded, i.e.,
	\begin{equation}\label{eq:second_mom_batch_grad}
		\EXPT{\|\nablabatch{k} \obj(u_k)\|^2 \big| \calF_{k-1}} \leq \frac{M}{\nmb} + \left(\frac{M_V}{\nmb} + 1\right) \|\widenabla \obj(u_k)\|^2.
	\end{equation}
\end{lemma}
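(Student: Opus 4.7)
The plan is to establish the two claims by separating the conditional expectation calculation from the variance bound, and in both cases exploiting the structural fact that, conditional on $\calF_{k-1}$, the random vectors $g^i \triangleq \nabla_u \Phi(u_k,p_k^i) + \nabla_u h(u_k,d^i)\nabla_p \Phi(u_k,p_k^i)$ for $i=1,\dots,\nmb$ are independent and identically distributed, each with marginal law matching the joint distribution $\gamma_k$ used in the definition of $\widenabla \obj(u_k)$.

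First I would justify this conditional i.i.d.\ property. Since $u_k$ is $\calF_{k-1}$-measurable, conditioning on $\calF_{k-1}$ fixes the entire history of decisions $u_1,\dots,u_k$, so $p_k^i = f^{(k)}(p_0^i,d^i)$ becomes a deterministic (parameterized) function applied to the pair $(p_0^i,d^i)$. Because these pairs are drawn independently from $\alpha$, the resulting vectors $(p_k^i,d^i)$ are conditionally independent with common distribution $\gamma_k = (f^{(k)}(x,y),y)_{\#}\alpha$. For the unbiasedness claim, I would then apply the tower property to each summand: $\E[g^i \mid \calF_{k-1}] = \E_{(p,d)\sim\gamma_k}[\nabla_u \Phi(u_k,p) + \nabla_u h(u_k,d)\nabla_p \Phi(u_k,p)] = \widenabla \obj(u_k)$, and average over $i$ to obtain \eqref{eq:unbiased_batch_grad}.

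For the second-moment bound, my plan is to use the decomposition $\E[\|X\|^2] = \var[X] + \|\E[X]\|^2$ with $X = \nablabatch{k}\obj(u_k)$, conditional on $\calF_{k-1}$. The squared-norm of the mean is already $\|\widenabla \obj(u_k)\|^2$ by the first part. For the conditional variance, the independence established above lets me write
\begin{equation*}
    \var\!\left[\tfrac{1}{\nmb}\textstyle\sum_{i=1}^{\nmb} g^i \,\Big|\, \calF_{k-1}\right]
    = \tfrac{1}{\nmb^2} \sum_{i=1}^{\nmb} \var\!\left[g^i \mid \calF_{k-1}\right],
\end{equation*}
and \cref{assump:var_grad} bounds each summand by $M + M_V\|\widenabla \obj(u_k)\|^2$. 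Adding the two contributions yields exactly \eqref{eq:second_mom_batch_grad}.

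The main obstacle I anticipate is not any calculation but rather a clean argument for the conditional i.i.d.\ structure, since $p_k^i$ is produced by a multi-step recursion that mixes the stochastic initial data with the $\calF_{k-1}$-measurable decisions; I would resolve this by invoking the explicit representation $p_k^i = f^{(k)}(p_0^i,d^i)$ introduced around \eqref{eq:cur_distr_stoch_grad}, so that once $u_1,\dots,u_k$ are fixed by the conditioning, each $g^i$ is a measurable function of a single independent pair $(p_0^i,d^i)$, making the independence and the identification of the marginal law $\gamma_k$ immediate.
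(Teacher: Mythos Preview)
Your proposal is correct and follows essentially the same approach as the paper: both establish the conditional i.i.d.\ structure of the $g^i$'s via the representation $p_k^i = f^{(k)}(p_0^i,d^i)$ with $u_1,\dots,u_k$ fixed by conditioning, then use linearity for \eqref{eq:unbiased_batch_grad} and the decomposition $\E[\|X\|^2] = \var[X] + \|\E[X]\|^2$ together with the averaging property of variance under independence for \eqref{eq:second_mom_batch_grad}. If anything, your justification of the conditional independence is slightly more explicit than the paper's.
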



\section{Performance Analysis}\label{sec:analysis}
We analyze the interplay between the distribution dynamics \eqref{eq:pop_dynamics} and the proposed online stochastic algorithm \eqref{eq:stochastic_alg}. In \cref{subsec:distr_shift}, we characterize the distribution shift driven by the decision-maker through the Wasserstein metric. We then establish the optimality guarantees of the algorithm \eqref{eq:stochastic_alg} when applied to the distribution dynamics \eqref{eq:pop_dynamics}. These guarantees hold in expectation and with high probability and are given in \cref{subsec:optimality} and \cref{subsec:optimality_hp}, respectively, thereby covering a broad spectrum of the overall performance. Finally, we consider a finite-sample regime and provide generalization certificates in \cref{subsec:generalization}.

\subsection{Distribution shifts}\label{subsec:distr_shift}
The distribution dynamics \eqref{eq:pop_dynamics} bring about constant shifts of the joint distribution $\gamma_k$. We characterize such distribution shifts through the lens of the Wasserstein distance. Specifically, we focus on the behavior of $W_1(\gamma_k,\gammass(u_k))$, i.e., the Wasserstein distance between the joint distribution $\gamma_k$ at time $k$ and the joint steady-state distribution $\gammass(u_k)$ induced by the decision $u_k$.

Throughout this section, we consider the space $(\mathbb{R}^{m+r},c)$ endowed with the metric
\begin{equation}\label{eq:metric_mix_norm}
	c\left((p,d),(p',d')\right) \triangleq \|p-p'\|_P + \|d-d'\|,
\end{equation}
where $p,p' \in \mathbb{R}^m, d,d' \in \mathbb{R}^r$, and $\|\cdot\|_P$ is the weighted norm. Then, $W_1(\gamma_k,\gammass(u_k))$ is defined by
\begin{equation*}
	W_1(\gamma_k,\gammass(u_k)) = \inf_{\bar{\gamma} \in \Gamma(\gamma_k,\gammass(u_k))} \int_{\mathbb{R}^{m+r} \times \mathbb{R}^{m+r}} c\left((p,d),(p',d')\right) \ud \bar{\gamma}\left((p,d),(p',d')\right),
\end{equation*}
where $(p,d)$ and $(p',d')$ are distributed according to $\gamma_k$ and $\gammass(u_k)$, respectively.


The following lemma gives an upper bound on the Wasserstein distance $W_1(\gamma_k,\gammass(u_k))$ and establishes a related recursive inequality. It is built on the properties of the pushforward operation induced by Borel maps, see \citet[Proposition~3]{aolaritei2022uncertainty}. Recall that $L_f^p$ is the Lipschitz constant of the dynamics function $f$ with respect to $p$ under $\|\cdot\|_P$, and that $L_h^u$ is the Lipschitz constant of the steady-state map $h$ of \eqref{eq:pop_dynamics} with respect to $u$ under $\|\cdot\|$, see \cref{assump:stable_system} and the discussion below. Further, $\lambda_{\max}(P) > 0$ is the maximum eigenvalue of the positive definite matrix $P$, and $f^{(k)}(p_0,d)$ denotes the value of $p_k$ given a pair $(p_0,d) \sim \alpha$ and past decisions, see the paragraph below \eqref{eq:cur_distr_stoch_grad} in \cref{subsec:sg_intuition}.

\begin{lemma}\label{lem:wass_dist_bd_recur}
	Let \cref{assump:stable_system} hold. With \eqref{eq:pop_dynamics}, the joint distributions $(\gamma_k)_{k\in \mathbb{N}}$ satisfy
	\begin{subequations}
		\begin{align}
			W_1(\gamma_k,\gammass(u_k)) &\leq \int_{\mathbb{R}^m \times \mathbb{R}^r} \left\|f^{(k)}(p_0,d) - h(u_k,d)\right\|_P \ud \alpha(p_0,d) \triangleq V_k, \quad \forall k \in \mathbb{N}, \label{eq:wass_dist_indiv_bd} \\
			V_k &\leq L_f^p V_{k-1} + L_f^p L_h^u \sqrt{\lambda_{\max}(P)} \|u_k - u_{k-1}\|, \quad \forall k \in \mathbb{N}_{+}. \label{eq:wass_dist_bd_contract}
		\end{align}
	\end{subequations}
	Moreover, for any $u_0 \in \mathbb{R}^n$, $V_0 \triangleq \int_{\mathbb{R}^m \times \mathbb{R}^r} \|f^{(0)}(p_0,d) - h(u_0,d)\|_P \ud \alpha(p_0,d)$ is finite.
\end{lemma}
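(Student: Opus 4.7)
The plan is to attack all three claims via an explicit coupling plus the Banach-style contraction of $f$ in its first argument.

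For the bound \eqref{eq:wass_dist_indiv_bd}, I would construct a single transport plan that realizes an upper bound on $W_1$ and then appeal to the infimum definition. Concretely, starting from $(p_0,d)\sim\alpha$, I would apply the Borel map $T:(p_0,d)\mapsto \bigl((f^{(k)}(p_0,d),d),(h(u_k,d),d)\bigr)$ to obtain the candidate coupling $\bar\gamma := T_{\#}\alpha$ on $\mathbb{R}^{m+r}\times\mathbb{R}^{m+r}$. By the definition of $\gamma_k$, the first marginal of $\bar\gamma$ is $\gamma_k$; and since the $d$-marginal of $\alpha$ is $\mu_d$ and the second component of $T$ depends only on $d$, the second marginal of $\bar\gamma$ is $(h(u_k,\cdot),\id)_{\#}\mu_d=\gammass(u_k)$, so $\bar\gamma\in\Gamma(\gamma_k,\gammass(u_k))$. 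Evaluating the metric $c$ from \eqref{eq:metric_mix_norm} on $\bar\gamma$ annihilates the $d$-component contribution, and the $p$-component integral is precisely $V_k$; taking the infimum over couplings then yields the claim.

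The recursive inequality \eqref{eq:wass_dist_bd_contract} rests on two identities: the one-step expansion $f^{(k)}(p_0,d)=f(f^{(k-1)}(p_0,d),u_k,d)$ and the steady-state fixed point $h(u_k,d)=f(h(u_k,d),u_k,d)$ from \cref{assump:stable_system}. Subtracting inside $\|\cdot\|_P$ and invoking the $L_f^p$-contraction of $f$ in its first argument, followed by a triangle inequality with $\pm h(u_{k-1},d)$ inserted, would give
\[
\|f^{(k)}(p_0,d)-h(u_k,d)\|_P \le L_f^p\,\|f^{(k-1)}(p_0,d)-h(u_{k-1},d)\|_P + L_f^p\,\|h(u_{k-1},d)-h(u_k,d)\|_P.
\]
For the second summand I would combine the norm conversion $\|v\|_P\le\sqrt{\lambda_{\max}(P)}\,\|v\|$ with the $L_h^u$-Lipschitz continuity of $h$ in $u$ (established from \cref{assump:stable_system} as noted just below it). Integrating both sides against $\alpha$ then produces $V_k \le L_f^p V_{k-1} + L_f^p L_h^u \sqrt{\lambda_{\max}(P)}\,\|u_k-u_{k-1}\|$.

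Finiteness of $V_0$ follows directly from $\mu_0,\mu_d\in\mathcal{P}_1$: since $f^{(0)}(p_0,d)=p_0$, the triangle inequality and the Lipschitz continuity of $h$ in $d$ yield $\|p_0-h(u_0,d)\|_P \le \sqrt{\lambda_{\max}(P)}\bigl(\|p_0\|+L_h^d\|d\|+\|h(u_0,0)\|\bigr)$, and each of the three resulting integrals against $\alpha$ is finite. The only delicate step in this plan is the marginal bookkeeping in the first paragraph: verifying $T_{\#}\alpha\in\Gamma(\gamma_k,\gammass(u_k))$ hinges on the precise specification of $\alpha$ (with marginals $\mu_0$ and $\mu_d$) and on the fact that $\gammass(u_k)$ depends on $\alpha$ only through its $d$-marginal. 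Once this is pinned down, the remainder is a mechanical application of Lipschitz estimates, the contraction fixed point, and the weighted-to-Euclidean norm conversion.
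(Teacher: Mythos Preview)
Your proposal is correct and follows essentially the same approach as the paper: an explicit coupling with shared $d$-component for \eqref{eq:wass_dist_indiv_bd}, then the fixed-point identity plus $L_f^p$-contraction and triangle inequality for \eqref{eq:wass_dist_bd_contract}, and finally the norm conversion with $L_h^d$-Lipschitzness for finiteness of $V_0$. Your construction of the coupling $T_{\#}\alpha$ is in fact slightly more direct than the paper's route through an intermediate reformulation via \citet[Proposition~3]{aolaritei2022uncertainty}, but the underlying idea is identical.
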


\cref{lem:wass_dist_bd_recur} characterizes the behavior of $W_1(\gamma_k,\gammass(u_k))$ through the evolution of the upper bound $V_k$. Specifically, this upper bound exhibits perturbed contraction \eqref{eq:wass_dist_bd_contract}, where the contraction coefficient is $L_f^u \in (0,1)$, and the perturbation term is proportional to the difference of consecutive inputs. If the inputs are kept fixed (i.e., $u_k = u,\forall k \in \mathbb{N}$), then $V_k$ and hence $W_1(\gamma_k,\gammass(u))$ converge to $0$ as $k$ increases. The implication is that $\gamma_k$ converges weakly in $\mathcal{P}_1(\mathbb{R}^m)$ to the steady-state distribution $\gammass(u)$, see \citep[Theorem~6.9]{villani2009optimal}.

We quantify the cumulative sum of squared Wasserstein distances in the theorem below by building on \cref{lem:wass_dist_bd_recur}. As explained in \cref{subsec:sg_intuition}, the difference between $\gamma_k$ and $\gammass(u_k)$ causes a bias in the gradient at time $k$. Such a cumulative sum reflects how those biases accumulate when we deploy our online algorithm \eqref{eq:stochastic_alg} and will be useful for quantifying the convergence measure of the nonconvex problem, see \cref{thm:optimality_nonconvex} in \cref{subsec:optimality} and \cref{thm:optimality_nonconvex_high_prob} in \cref{subsec:optimality_hp}.

\begin{theorem}\label{thm:wasserstein_evolution_general}
	Given \cref{assump:stable_system} and the distribution dynamics \eqref{eq:pop_dynamics}, the joint distributions $(\gamma_k)_{k\in \mathbb{N}}$ satisfy
	\begin{equation}\label{eq:dist_square_sum_general}
		\sum_{k=0}^{T-1} W_1(\gamma_k,\gammass(u_k))^2 \leq \frac{V_0}{1-\rho_1} + \frac{\rho_2}{1-\rho_1} \sum_{k=1}^{T} \|u_k-u_{k-1}\|^2,
	\end{equation}
	where the coefficients are $\rho_1 = \frac{1+(L_f^p)^2}{2} \in (0,1)$ and $\rho_2 = \frac{1+(L_f^p)^2}{1-(L_f^p)^2} (L_f^p L_h^u)^2 \lambda_{\max}(P)$. Furthermore, the online stochastic algorithm~\eqref{eq:stochastic_alg} applied to \eqref{eq:pop_dynamics} ensures that
	\begin{equation}\label{eq:dist_square_sum_gd}
		\sum_{k=0}^{T-1} W_1(\gamma_k,\gammass(u_k))^2 \leq \frac{V_0^2}{1-\rho_1} + \frac{\eta^2 \rho_2}{1-\rho_1} \sum_{k=0}^{T-1} \|\nablabatch{k} \obj(u_k)\|^2.
	\end{equation}
\end{theorem}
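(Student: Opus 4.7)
The plan is to start from the one-step recursion \eqref{eq:wass_dist_bd_contract} provided by \cref{lem:wass_dist_bd_recur}, square it using Young's inequality with a carefully tuned parameter, and then telescope. Since \eqref{eq:wass_dist_indiv_bd} already gives $W_1(\gamma_k,\gammass(u_k))^2 \leq V_k^2$, it suffices to bound $\sum_{k=0}^{T-1} V_k^2$.

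First I would square \eqref{eq:wass_dist_bd_contract}. Writing $C \triangleq L_f^p L_h^u \sqrt{\lambda_{\max}(P)}$ for brevity and applying Young's inequality $(a+b)^2 \leq (1+\epsilon) a^2 + (1+1/\epsilon) b^2$ to $V_k \leq L_f^p V_{k-1} + C\|u_k-u_{k-1}\|$, I would choose $\epsilon$ so that the coefficient of $V_{k-1}^2$ becomes exactly $\rho_1$, i.e., set $(1+\epsilon)(L_f^p)^2 = \rho_1 = (1+(L_f^p)^2)/2$. Solving yields $\epsilon = (1-(L_f^p)^2)/(2(L_f^p)^2)$, and a short calculation gives $1+1/\epsilon = (1+(L_f^p)^2)/(1-(L_f^p)^2)$. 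Consequently
\begin{equation*}
V_k^2 \leq \rho_1 V_{k-1}^2 + \rho_2 \|u_k-u_{k-1}\|^2, \qquad k \in \mathbb{N}_{+},
\end{equation*}
with $\rho_2$ matching the statement of the theorem. This is the key step, and verifying that the Young-inequality coefficients align with the stated $\rho_1,\rho_2$ is the main place where one has to be careful; everything else is bookkeeping.

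Next I would sum the squared recursion. Summing from $k=1$ to $T-1$ and adding $V_0^2$ to both sides,
\begin{equation*}
\sum_{k=0}^{T-1} V_k^2 \leq V_0^2 + \rho_1 \sum_{k=0}^{T-2} V_k^2 + \rho_2 \sum_{k=1}^{T-1} \|u_k-u_{k-1}\|^2 \leq V_0^2 + \rho_1 \sum_{k=0}^{T-1} V_k^2 + \rho_2 \sum_{k=1}^{T} \|u_k-u_{k-1}\|^2,
\end{equation*}
where the last step enlarges the control-difference sum to index $T$ (which only weakens the bound) so that it matches the statement. Rearranging using $\rho_1 \in (0,1)$ and combining with \eqref{eq:wass_dist_indiv_bd} yields \eqref{eq:dist_square_sum_general}.

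Finally, for \eqref{eq:dist_square_sum_gd} I would substitute the update rule \eqref{eq:stochastic_alg} into \eqref{eq:dist_square_sum_general}. Since $u_k - u_{k-1} = -\eta \nablabatch{k-1}\obj(u_{k-1})$ for every $k \in \mathbb{N}_{+}$, we have $\|u_k-u_{k-1}\|^2 = \eta^2 \|\nablabatch{k-1}\obj(u_{k-1})\|^2$, and re-indexing the sum from $k=1,\ldots,T$ to $k=0,\ldots,T-1$ delivers the claimed bound. I expect no genuine difficulty in this last step; the entire proof hinges on the Young-inequality splitting performed in the key step above.
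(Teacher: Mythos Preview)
Your proposal is correct and follows essentially the same approach as the paper: the paper packages your Young-inequality splitting (with the same choice of parameter yielding $\rho_1$ and $\rho_2$) into an auxiliary lemma (\cref{lem:seq_square_sum}), then applies it to the recursion \eqref{eq:wass_dist_bd_contract} exactly as you do inline, and finishes by substituting the update rule \eqref{eq:stochastic_alg}. Your derivation of $V_0^2/(1-\rho_1)$ is also what the paper's proof obtains; the $V_0$ in the displayed statement \eqref{eq:dist_square_sum_general} is a typo.
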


Analogous to \eqref{eq:wass_dist_bd_contract}, the upper bound on the sum of squared Wasserstein distances depends on the cumulative variation in decisions. After invoking the gradient-based update rule \eqref{eq:stochastic_alg}, we know that this cumulative variation is proportional to the step size $\eta$ and the squared norm of the mini-batch gradient $\nablabatch{k} \obj(u_k)$, where $k=0,\ldots,T\!-\!2$. In the following subsections, we will exploit \eqref{eq:dist_square_sum_gd} and the connection between $\nablabatch{k} \obj(u_k)$ and $\nabla \obj(u_k)$ to establish convergence guarantees.

An important implication of the distribution shift due to the dynamics \eqref{eq:pop_dynamics} is that the mini-batch stochastic gradient \eqref{eq:batch_stoch_grad}, albeit unbiased with respect to the approximate gradient $\widenabla \obj(u_k)$ (see \eqref{eq:cur_distr_stoch_grad}), is a biased estimate of the true gradient $\nabla \obj(u_k)$. The reason is that we draw samples from the current distribution $\gamma_k$ instead of the steady-state distribution $\gammass(u_k)$ to construct \eqref{eq:batch_stoch_grad}. Let $e_k$ denote the difference of expected gradients due to the discrepancy between $\gamma_k$ and $\gammass(u_k)$, 
that is,
\begin{align}\label{eq:error_grad}
	e_k \triangleq& \widenabla \obj(u_k) - \nabla \obj(u_k) \notag \\
		=& \E_{(p,d) \sim \gamma_k}[\nabla_u \Phi(u_k,p) + \nabla_u h(u_k,d) \nabla_p \Phi(u_k,p)] \notag \\
		 &- \E_{(p,d) \sim \gammass(u_k)}[\nabla_u \Phi(u_k,p) + \nabla_u h(u_k,d) \nabla_p \Phi(u_k,p)].
\end{align}
The following lemma provides an upper bound on $\|e_k\|$ through $W_1(\gamma_k,\gammass(u_k))$, i.e., the Wasserstein distance between two joint distributions $\gamma_k$ and $\gammass(u_k)$.

\begin{lemma}\label{lem:grad_err_bound}
	If \cref{assump:stable_system,assump:obj_property,assump:obj_rand_Lipschitz} are satisfied, then
	\begin{equation}\label{eq:grad_err_bd_wasserstein}
		\|e_k\| = \|\widenabla \obj(u_k) - \nabla \obj(u_k)\| \leq L W_1(\gamma_k,\gammass(u_k)),
	\end{equation}
	where $L = \max\left(L_{\Phi}^p M_h^d, \frac{M_\Phi^u + L_h^u M_\Phi^p}{\sqrt{\lambda_{\min}(P)}} \right)$.
\end{lemma}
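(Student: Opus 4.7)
The natural approach is Kantorovich--Rubinstein duality combined with a direct coupling argument: rewrite $e_k$ as the integral of a vector-valued map against a coupling of $\gamma_k$ and $\gammass(u_k)$, show that this map is $L$-Lipschitz with respect to the product metric $c$ on $\mathbb{R}^{m+r}$ defined in \eqref{eq:metric_mix_norm}, and then pass to the infimum over couplings to recover $W_1$.

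Concretely, denote $F(u,p,d) \triangleq \nabla_u \Phi(u,p) + \nabla_u h(u,d)\,\nabla_p \Phi(u,p)$, so that $e_k = \mathbb{E}_{\gamma_k}[F(u_k,p,d)] - \mathbb{E}_{\gammass(u_k)}[F(u_k,p,d)]$. For any coupling $\bar\gamma \in \Gamma(\gamma_k, \gammass(u_k))$, the linearity of expectation and Jensen's inequality give
\begin{equation*}
\|e_k\| \;\leq\; \int \bigl\|F(u_k,p,d) - F(u_k,p',d')\bigr\|\,\ud\bar\gamma\bigl((p,d),(p',d')\bigr).
\end{equation*}
The key step is therefore a pointwise Lipschitz bound on the integrand. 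I would split $F(u_k,p,d)-F(u_k,p',d')$ into three summands, namely $\nabla_u\Phi(u_k,p)-\nabla_u\Phi(u_k,p')$, $\nabla_u h(u_k,d)[\nabla_p\Phi(u_k,p)-\nabla_p\Phi(u_k,p')]$, and $[\nabla_u h(u_k,d) - \nabla_u h(u_k,d')]\nabla_p\Phi(u_k,p')$, and bound each in turn. From \cref{assump:obj_property} the first is at most $M_\Phi^u\|p-p'\|$; the second is at most $L_h^u M_\Phi^p\|p-p'\|$, using $\|\nabla_u h(u_k,d)\|\leq L_h^u$ (a consequence of the Lipschitzness of $h$ in $u$ noted below \cref{assump:stable_system}); and the third is at most $L_\Phi^p M_h^d \|d-d'\|$, using $\|\nabla_p\Phi(u_k,p')\|\leq L_\Phi^p$ together with \cref{assump:stable_system}.

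Summing the three contributions yields
\begin{equation*}
\bigl\|F(u_k,p,d) - F(u_k,p',d')\bigr\| \;\leq\; (M_\Phi^u + L_h^u M_\Phi^p)\,\|p-p'\| + L_\Phi^p M_h^d\,\|d-d'\|.
\end{equation*}
To convert the unweighted norm on $p-p'$ into the weighted norm appearing in $c$, I would invoke the inequality $\|p-p'\|\leq \|p-p'\|_P/\sqrt{\lambda_{\min}(P)}$ (this is the kind of fact collected in \cref{lem:weighted_norm} of \cref{app:lemmas}). With $L$ defined as in the statement, the right-hand side becomes at most $L\,c((p,d),(p',d'))$. Plugging back into the integral bound and taking the infimum over $\bar\gamma \in \Gamma(\gamma_k,\gammass(u_k))$ yields $\|e_k\|\leq L\,W_1(\gamma_k,\gammass(u_k))$ by definition of $W_1$ under the metric \eqref{eq:metric_mix_norm}.

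The only mild obstacle is bookkeeping in the three-way split to ensure that each factor gets a boundedness or Lipschitz constant from an assumption actually stated in the paper; in particular, it is important to use $\|\nabla_u h\|\leq L_h^u$ rather than the stronger Lipschitzness of $\nabla_u h$ in $u$ (which has not been assumed), and to invoke the Lipschitzness of $\nabla_u h$ in $d$ (\cref{assump:stable_system}) only for the third summand. Integrability of the integrand against any coupling follows from the Lipschitz estimate together with the fact that $\gamma_k,\gammass(u_k)\in\mathcal{P}_1$, so exchanging expectation and the coupling integral is routine.
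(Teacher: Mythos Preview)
Your proposal is correct and follows essentially the same route as the paper's proof: define the vector-valued map $F(u_k,p,d)=\nabla_u\Phi(u_k,p)+\nabla_u h(u_k,d)\nabla_p\Phi(u_k,p)$, show it is $L$-Lipschitz in $(p,d)$ with respect to the metric $c$ of \eqref{eq:metric_mix_norm} via exactly the same three ingredients ($M_\Phi^u$, $L_h^u M_\Phi^p$, and $L_\Phi^p M_h^d$) and the conversion $\|p-p'\|\leq\|p-p'\|_P/\sqrt{\lambda_{\min}(P)}$, and then pass to $W_1$. The only cosmetic differences are that the paper telescopes in two steps (first $d$, then $p$) rather than your three-term split, and that it packages the final coupling/infimum argument as an appeal to \cref{lem:function_closeness} rather than writing it out directly; the content is identical.
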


\cref{lem:grad_err_bound} indicates that the difference $e_k$ between the approximate gradient $\widenabla \obj(u_k)$ involving $\gamma_k$ and the true gradient $\nabla \obj(u_k)$ involving $\gammass(u_k)$ is proportional to the Wasserstein distance $W_1(\gamma_k, \gammass(u_k))$. By referring to \cref{lem:wass_dist_bd_recur} on the evolution of $W_1(\gamma_k, \gammass(u_k))$, we know that when neighboring decisions (i.e., $u_k$ and $u_{k-1}$) are close and the iteration counter is large, $\widenabla \obj(u_k)$ is a close approximation of $\nabla \obj(u_k)$. This aligns with the intuition stated at the end of \cref{subsec:sg_intuition}.

\subsection{Optimality in expectation}\label{subsec:optimality}
We present the optimality guarantee of our stochastic algorithm~\eqref{eq:stochastic_alg} when applied to the distribution dynamics~\eqref{eq:pop_dynamics}. The major challenge is that the evolution of the distribution and the iterates of the algorithm are coupled. To disentangle this coupling, we leverage the characterizations of the distribution shift in \cref{thm:wasserstein_evolution_general} and the descent-type iterate \eqref{eq:stochastic_alg} and then quantify the overall convergence measure, i.e., the average expected second moment of gradients.

Recall that $\obj(u) = \E_{p \sim \muss(u)}[\Phi(u,p)]$ is the objective, $\obj^*$ is the optimal value of problem~\eqref{eq:dd_opt_problem}, $\nabla \obj(u)$ is given by \eqref{eq:ss_distr_stoch_grad}, $\eta$ is the step size, $T \in \mathbb{N}_+$ is the number of iterations, $\nmb \in \mathbb{N}_+$ is the size of the mini-batch, $L$ is the constant specified in \cref{lem:grad_err_bound}, $M$ is the constant in the variance bound in \cref{assump:var_grad}, and $\rho_1$ and $\rho_2$ are constants given in \cref{thm:wasserstein_evolution_general}. In the following theorem, we provide the first main convergence result of our stochastic algorithm \eqref{eq:stochastic_alg}.

\begin{theorem}\label{thm:optimality_nonconvex}
	Suppose that \cref{assump:stable_system,assump:obj_property,assump:obj_rand_Lipschitz,assump:var_grad} hold. Let $\eta$ be chosen such that
	\begin{equation}\label{eq:step_size_expt}
		0 < \eta \leq \frac{1}{\sqrt{T}} \cdot \min\left(\frac{1}{4L\left(\frac{M_V}{\nmb} + 1\right)}, \sqrt{\frac{1-\rho_1}{14\rho_2 L^2 \left(\frac{M_V}{\nmb} + 1\right)}} \right).
	\end{equation}
	The stochastic algorithm \eqref{eq:stochastic_alg} applied to the distribution dynamics \eqref{eq:pop_dynamics} guarantees that
	\begin{align}\label{eq:average_second_moment_bd}
		\frac{1}{T} \sum_{k=0}^{T-1} \EXPT{\|\nabla \obj(u_k)\|^2} \leq \frac{8(\obj(u_0) - \obj^*)}{\eta T} + \frac{4LM\eta}{\nmb} + \bigO{\frac{1}{T}}.
	\end{align}
	By choosing $\eta$ as the upper bound in \eqref{eq:step_size_expt}, the order of the right-hand side of \eqref{eq:average_second_moment_bd} is
	\begin{equation}\label{eq:average_second_moment_complexity}
		\frac{1}{T} \sum_{k=0}^{T-1} \EXPT{\|\nabla \obj(u_k)\|^2} = \bigO{\frac{1}{\sqrt{T}}}.
	\end{equation}
\end{theorem}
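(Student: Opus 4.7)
My plan is to combine the standard $L_{\obj}$-smoothness descent argument with the distribution-shift bounds from \cref{thm:wasserstein_evolution_general} and \cref{lem:grad_err_bound}, being careful that the bias $e_k = \widenabla \obj(u_k) - \nabla \obj(u_k)$ accumulates through both a direct term and an indirect term mediated by the dynamics.

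\emph{Step 1: Biased SGD descent.} Starting from $L_{\obj}$-smoothness of $\obj$ and the update rule~\eqref{eq:stochastic_alg}, I would write
\begin{equation*}
    \obj(u_{k+1}) \leq \obj(u_k) - \eta \langle \nabla \obj(u_k), \nablabatch{k} \obj(u_k)\rangle + \tfrac{L_{\obj} \eta^2}{2} \|\nablabatch{k} \obj(u_k)\|^2.
\end{equation*}
Taking conditional expectation given $\calF_{k-1}$ and applying \cref{lem:mini_batch_grad} (both \eqref{eq:unbiased_batch_grad} and \eqref{eq:second_mom_batch_grad}) turns this into an inequality in $\widenabla \obj(u_k)$. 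Decomposing $\widenabla \obj(u_k) = \nabla \obj(u_k) + e_k$, applying Young's inequality on $\langle \nabla \obj(u_k), e_k\rangle$, and using $\|\widenabla \obj(u_k)\|^2 \leq 2\|\nabla \obj(u_k)\|^2 + 2\|e_k\|^2$, I expect to obtain a per-step bound of the shape
\begin{equation*}
    \EXPT{\obj(u_{k+1}) \mid \calF_{k-1}} \leq \obj(u_k) - \tfrac{\eta}{2}\|\nabla \obj(u_k)\|^2 + C_1 \eta \|e_k\|^2 + C_2 \eta^2 \|\nabla \obj(u_k)\|^2 + \tfrac{L_{\obj} \eta^2 M}{2\nmb},
\end{equation*}
with $C_1 = \tfrac{1}{2} + L_{\obj}\eta(\tfrac{M_V}{\nmb}+1)$ and $C_2 = L_{\obj}(\tfrac{M_V}{\nmb}+1)$.

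\emph{Step 2: Closing the loop through Wasserstein shifts.} Summing from $k=0$ to $T-1$, telescoping $\obj(u_k)$ against $\obj^*$, and taking total expectation produces
\begin{equation*}
    \Bigl(\tfrac{\eta}{2} - C_2 \eta^2\Bigr) \sum_{k=0}^{T-1} \EXPT{\|\nabla \obj(u_k)\|^2} \leq \obj(u_0)-\obj^* + C_1 \eta \sum_{k=0}^{T-1} \EXPT{\|e_k\|^2} + \tfrac{T L_{\obj} \eta^2 M}{2\nmb}.
\end{equation*}
I would now invoke \cref{lem:grad_err_bound} to get $\|e_k\|^2 \leq L^2 W_1(\gamma_k,\gammass(u_k))^2$, and then apply \eqref{eq:dist_square_sum_gd} from \cref{thm:wasserstein_evolution_general} to control $\sum_k W_1^2$ by $\tfrac{V_0^2}{1-\rho_1} + \tfrac{\eta^2 \rho_2}{1-\rho_1} \sum_k \|\nablabatch{k} \obj(u_k)\|^2$. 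Re-applying \eqref{eq:second_mom_batch_grad} and $\|\widenabla \obj(u_k)\|^2 \leq 2\|\nabla \obj(u_k)\|^2 + 2\|e_k\|^2$ lets me express $\EXPT{\|\nablabatch{k}\obj(u_k)\|^2}$ in terms of $\EXPT{\|\nabla \obj(u_k)\|^2}$ and $\EXPT{\|e_k\|^2}$. Substituting these back produces a self-referential inequality
\begin{equation*}
    A(\eta) \sum_{k=0}^{T-1} \EXPT{\|\nabla \obj(u_k)\|^2} \leq \obj(u_0) - \obj^* + B(\eta) + \tfrac{T L_{\obj} \eta^2 M}{2\nmb},
\end{equation*}
where $A(\eta)$ is of the form $\tfrac{\eta}{2} - \mathcal{O}(\eta^2) - \mathcal{O}(\eta^3)$ and $B(\eta) = \mathcal{O}(\eta) + \mathcal{O}(\eta^3)$ collects the constant terms coming from $V_0^2$.

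\emph{Step 3: Step size and rate.} The step size condition \eqref{eq:step_size_expt}, with its two competing ceilings, is exactly what I need to (a) ensure $\eta L_{\obj}(\tfrac{M_V}{\nmb}+1) \leq 1/8$ so the direct quadratic in $\eta$ is absorbed, and (b) ensure $\eta^2 \rho_2 L^2 (\tfrac{M_V}{\nmb}+1)/(1-\rho_1)$ is small enough for the indirect term coming through \eqref{eq:dist_square_sum_gd} to be absorbed as well. After absorption, $A(\eta) \geq \eta/8$, which yields \eqref{eq:average_second_moment_bd} with the displayed leading constants and an $\mathcal{O}(1/T)$ remainder from the $V_0^2$ and $\eta^3 T$ contributions. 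Dividing by $T$ and substituting $\eta \asymp 1/\sqrt{T}$ from the upper bound of \eqref{eq:step_size_expt} gives \eqref{eq:average_second_moment_complexity}.

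\emph{Main obstacle.} The delicate part is the self-referential loop: the bias $\|e_k\|^2$ is bounded by the cumulative squared Wasserstein distance, which itself depends on $\sum_k \|\nablabatch{k}\obj(u_k)\|^2$ via \eqref{eq:dist_square_sum_gd}, which in turn is bounded back in terms of $\|\nabla \obj(u_k)\|^2$ and $\|e_k\|^2$. Making sure that after all these substitutions the coefficient in front of $\sum \EXPT{\|\nabla \obj(u_k)\|^2}$ remains strictly positive, and that it is bounded below by a constant multiple of $\eta$, is what drives the two-pronged step-size restriction in \eqref{eq:step_size_expt}; tracking the constants so that the final bound assumes the clean form \eqref{eq:average_second_moment_bd} is the bookkeeping challenge of the proof.
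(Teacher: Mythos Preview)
Your proposal is correct and follows essentially the same approach as the paper's proof: smoothness descent, conditional expectation via \cref{lem:mini_batch_grad}, the decomposition $\widenabla\obj(u_k)=\nabla\obj(u_k)+e_k$ with Young's inequality, then closing the self-referential loop through \cref{lem:grad_err_bound} and \cref{thm:wasserstein_evolution_general}, with the two branches of \eqref{eq:step_size_expt} absorbing the direct and Wasserstein-mediated quadratic terms respectively. The only organizational difference is that the paper first isolates the bound $\sum_k \EXPT{W_1(\gamma_k,\gammass(u_k))^2} \le \tfrac{1}{6L^2}\sum_k \EXPT{\|\nabla\obj(u_k)\|^2} + \text{const}$ as a standalone lemma (solving the inner self-reference for $\sum W_1^2$ before substituting into the descent inequality), whereas you propose to unwind both loops inline; the algebra and resulting constants are the same.
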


The convergence measure analyzed in \cref{thm:optimality_nonconvex} is the average of the expected second moments of $\nabla \obj(u_k)$. The expectation is taken with respect to the randomness of the proposed algorithm, i.e., the randomness in selecting mini-batch samples. This measure is typical in nonconvex optimization to characterize (local) optimality of solutions \citep{bottou2018optimization}. The upper bound \eqref{eq:average_second_moment_bd} on this measure involves the number of iterations $T$, the step size $\eta$, and other constants related to the problem and the algorithm. With a step size attaining the upper bound in \eqref{eq:step_size_expt}, our stochastic algorithm \eqref{eq:stochastic_alg} yields an $\mathcal{O}(1/\sqrt{T})$ rate of convergence, matching stochastic gradient descent for static nonconvex problems. 
It is common in the stochastic optimization literature \citep{ghadimi2013stochastic,bottou2018optimization} to let the bound on the step size be a function of the number of iterations, facilitating the analysis of the convergence rate.

The rate \eqref{eq:average_second_moment_complexity}, nonetheless, is nontrivial given the presence of unknown dynamics and the composite structure of the decision-dependent problem. Online stochastic algorithms reviewed in \cref{subsec:literature} that lack an anticipation term as \eqref{eq:batch_stoch_grad} can incur persistent biases in gradients, causing sub-optimality (namely, the average expected second moment of gradients does not vanish). 
Although dynamics inhibit us from directly accessing steady-state samples and cause biases in gradients, we demonstrate that the accumulation of these biases does not deteriorate the overall convergence rate. This is largely due to the contracting distribution dynamics and a relatively slow algorithm (with a bounded step size), \`{a} la time-scale separation \citep{khalil2002nonlinear} as quantified by \eqref{eq:step_size_expt}. Finally, we remark that variance reduction techniques offer a promising means of improving the convergence rate \citep{bottou2018optimization}.

The expected gradient $\nabla \obj(u_k)$, as part of the convergence measure in \eqref{eq:average_second_moment_bd}, involves the steady-state distribution $\gammass(u_k)$. In practice, however, we can only sample from the current distribution $\gamma_k$ to gain an understanding of the quality of solutions. In the following corollary, we analyze the average expected second moment of $\widenabla \obj(u_k)$ involving $\gamma_k$, see \eqref{eq:cur_distr_stoch_grad}.

\begin{corollary}\label{cor:approximate_optimality}
	Let the conditions of \cref{thm:optimality_nonconvex} hold. The stochastic algorithm \eqref{eq:stochastic_alg} acting on the distribution dynamics \eqref{eq:pop_dynamics} ensures that
	\begin{equation}\label{eq:average_approx_sec_mom_complexity}
		\frac{1}{T} \sum_{k=0}^{T-1} \EXPT{\|\widenabla \obj(u_k)\|^2} \leq \frac{7}{3T} \sum_{k=0}^{T-1} \EXPT{\|\nabla \obj(u_k)\|^2} + \bigO{\frac{1}{T}} = \bigO{\frac{1}{\sqrt{T}}}.
	\end{equation}
\end{corollary}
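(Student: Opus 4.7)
The plan is to decompose $\widenabla \obj(u_k) = \nabla \obj(u_k) + e_k$, where $e_k$ is the gradient bias defined in \eqref{eq:error_grad}, and apply Young's inequality $\|a+b\|^2 \leq (1+\alpha)\|a\|^2 + (1+1/\alpha)\|b\|^2$ with $\alpha = 4/3$ to obtain
\begin{equation*}
    \|\widenabla \obj(u_k)\|^2 \leq \tfrac{7}{3}\|\nabla \obj(u_k)\|^2 + \tfrac{7}{4}\|e_k\|^2.
\end{equation*}
By \cref{lem:grad_err_bound}, $\|e_k\|^2 \leq L^2 W_1(\gamma_k, \gammass(u_k))^2$, which converts the task of controlling the gradient bias into that of controlling the squared Wasserstein discrepancy between the transient and induced steady-state joint distributions.

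Next I would sum over $k = 0, \ldots, T-1$, take total expectation, divide by $T$, and invoke \cref{thm:wasserstein_evolution_general}, specifically \eqref{eq:dist_square_sum_gd}, to bound $\frac{1}{T}\sum_k \EXPT{W_1(\gamma_k,\gammass(u_k))^2}$ by a term of order $1/T$ plus an $\eta^2$-scaled average of $\EXPT{\|\nablabatch{k} \obj(u_k)\|^2}$. The latter is then tamed using the tower property of conditional expectation together with \cref{lem:mini_batch_grad}, specifically \eqref{eq:second_mom_batch_grad}, giving a further bound of $M/\nmb + (M_V/\nmb + 1)\,\EXPT{\|\widenabla \obj(u_k)\|^2}$. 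Writing $A_T \triangleq \frac{1}{T}\sum_{k=0}^{T-1}\EXPT{\|\widenabla \obj(u_k)\|^2}$ and $B_T \triangleq \frac{1}{T}\sum_{k=0}^{T-1}\EXPT{\|\nabla \obj(u_k)\|^2}$, this chain assembles into a self-referential inequality
\begin{equation*}
    A_T \leq \tfrac{7}{3} B_T + \frac{7L^2 \rho_2 \eta^2}{4(1-\rho_1)}\left(\tfrac{M_V}{\nmb}+1\right) A_T + \bigO{\tfrac{1}{T}}.
\end{equation*}

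The main obstacle is precisely this self-reference in $A_T$, and the resolution is to exploit the step-size cap \eqref{eq:step_size_expt}. The prescribed $\eta^2 \leq (1-\rho_1)/[14\rho_2 L^2(M_V/\nmb+1) T]$ is calibrated so that the coefficient of $A_T$ on the right is bounded by $1/(8T)$. Rearranging and using $1/(1-1/(8T)) = 1 + \bigO{1/T}$ yields $A_T \leq \tfrac{7}{3} B_T + \bigO{B_T/T} + \bigO{1/T}$. Finally, I would substitute the rate $B_T = \bigO{1/\sqrt{T}}$ from \cref{thm:optimality_nonconvex}; since this makes $B_T/T = \bigO{1/T}$, both the explicit inequality and the $\bigO{1/\sqrt{T}}$ conclusion in \eqref{eq:average_approx_sec_mom_complexity} follow at once.
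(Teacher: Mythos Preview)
Your argument is correct, but it takes a somewhat different route from the paper's. The paper uses the cruder split $\|\widenabla\obj(u_k)\|^2 \leq 2\|\nabla\obj(u_k)\|^2 + 2\|e_k\|^2$ and then invokes an intermediate lemma (established inside the proof of \cref{thm:optimality_nonconvex}) that already bounds $\sum_k \EXPT{W_1(\gamma_k,\gammass(u_k))^2}$ directly by $\tfrac{1}{6L^2}\sum_k\EXPT{\|\nabla\obj(u_k)\|^2}$ plus $\bigO{1}$ constants; the self-reference in $\widenabla\obj$ has thus already been resolved upstream, and the combination $2 + 2L^2\cdot\tfrac{1}{6L^2} = \tfrac{7}{3}$ drops out with the exact constant and no residual $\bigO{B_T/T}$ term. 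Your approach instead tunes Young's inequality to hit $7/3$ from the outset, appeals only to the main-text \cref{thm:wasserstein_evolution_general} and \cref{lem:mini_batch_grad}, and handles the self-reference explicitly via the step-size cap \eqref{eq:step_size_expt}; this is self-contained but forces you to invoke the rate $B_T=\bigO{1/\sqrt{T}}$ from \cref{thm:optimality_nonconvex} to absorb the $\bigO{B_T/T}$ slack into $\bigO{1/T}$. Both reach the same inequality; the paper's version is shorter because it recycles work already done, while yours makes the dependence on \eqref{eq:step_size_expt} more transparent.
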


\cref{cor:approximate_optimality} implies that the average expected second moments of $\widenabla \obj(u_k)$ related to the current distribution $\gamma_k$ are still of the order of $\mathcal{O}(1/\sqrt{T})$. The reason is that the convergence measures in \eqref{eq:average_second_moment_complexity} and \eqref{eq:average_approx_sec_mom_complexity} can be connected via the average expected squared Wasserstein distance, i.e., $\big(\sum_{k=0}^{T-1} \EXPT{W_1(\gamma_k,\gammass(u_k))^2}\big)/T$, whose order is the same as both measures, see also \eqref{eq:wasserstein_square_sum_pre} in \cref{app:proof_thm_optimality}.

Apart from the above optimality guarantees, we are also interested in the convergence of the distribution dynamics~\eqref{eq:pop_dynamics} while interacting with the decision-maker as per \eqref{eq:stochastic_alg}. We provide the convergence guarantee in the following theorem.
\begin{theorem}\label{thm:distribution_expt}
	Let the conditions of \cref{thm:optimality_nonconvex} hold. Then,
	\begin{equation}\label{eq:avg_wasserstein_dist}
		\frac{1}{T} \sum_{k=0}^{T-1} \EXPT{W_1(\gamma_k, \gammass(u_k))} \leq \sqrt{\frac{1}{6L^2} \left(\frac{8(\obj(u_0) \!-\! \obj^*)}{\eta T} \!+\! \frac{4LM\eta}{\nmb}\right)} \!+\! \bigO{\frac{1}{\sqrt{T}}} \!=\! \bigO{\frac{1}{T^{\frac{1}{4} }}}.
	\end{equation}
\end{theorem}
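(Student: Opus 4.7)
The plan is to reduce the bound on $\frac{1}{T}\sum_k \EXPT{W_1(\gamma_k,\gammass(u_k))}$ to a combination of the contraction estimate in \cref{thm:wasserstein_evolution_general} and the optimality guarantee of \cref{thm:optimality_nonconvex}. I would first bound the averaged squared Wasserstein distance $\frac{1}{T}\sum_k \EXPT{W_1(\gamma_k,\gammass(u_k))^2}$ and then pass to the first moment via Jensen's inequality $\frac{1}{T}\sum_k \EXPT{W_1} \le \sqrt{\frac{1}{T}\sum_k \EXPT{W_1^2}}$ combined with the subadditivity $\sqrt{a+b}\le \sqrt{a}+\sqrt{b}$.

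Starting from \eqref{eq:dist_square_sum_gd} and taking expectations, the task reduces to controlling $\sum_k \EXPT{\|\nablabatch{k}\obj(u_k)\|^2}$. I would apply the second-moment estimate \eqref{eq:second_mom_batch_grad}, then invoke the decomposition $\widenabla\obj(u_k)=\nabla\obj(u_k)+e_k$ together with the elementary inequality $\|a+b\|^2 \le 2\|a\|^2+2\|b\|^2$ to obtain $\EXPT{\|\widenabla\obj(u_k)\|^2}\le 2\EXPT{\|\nabla\obj(u_k)\|^2}+2\EXPT{\|e_k\|^2}$. \cref{lem:grad_err_bound} converts $\EXPT{\|e_k\|^2}$ into $L^2\,\EXPT{W_1(\gamma_k,\gammass(u_k))^2}$, producing a self-referential inequality of the form $(1-\alpha)\sum \EXPT{W_1^2} \le C_0 + C_1 \eta^2 T + \beta \sum \EXPT{\|\nabla\obj(u_k)\|^2}$, with $\alpha=\frac{2\eta^2\rho_2 L^2(M_V/\nmb+1)}{1-\rho_1}$ and $\beta=\frac{2\eta^2\rho_2(M_V/\nmb+1)}{1-\rho_1}$.

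The second branch of the step-size constraint \eqref{eq:step_size_expt} is tuned so that $\alpha \le \frac{1}{7T}\le \frac{1}{7}$, giving $(1-\alpha)^{-1}\le 7/6$, while the same constraint forces $\beta$ to shrink like $\frac{1}{L^2 T}$. Combining these and invoking \cref{thm:optimality_nonconvex} on $\frac{1}{T}\sum \EXPT{\|\nabla\obj(u_k)\|^2}$ (which is at most $\frac{8(\obj(u_0)-\obj^*)}{\eta T}+\frac{4LM\eta}{\nmb}+\bigO{1/T}$) yields $\frac{1}{T}\sum \EXPT{W_1^2}\le \frac{1}{6L^2}\left(\frac{8(\obj(u_0)-\obj^*)}{\eta T}+\frac{4LM\eta}{\nmb}\right)+\bigO{1/T}$.

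Finally, Jensen's inequality together with $\sqrt{a+b}\le \sqrt{a}+\sqrt{b}$ peels off the additive $\bigO{1/T}$ residual inside the square root as an $\bigO{1/\sqrt{T}}$ correction outside, producing \eqref{eq:avg_wasserstein_dist}; the overall $\bigO{1/T^{1/4}}$ rate then follows by substituting $\eta=\Theta(1/\sqrt{T})$. The main obstacle is the constant bookkeeping through the self-referential step: the particular factor $14\rho_2 L^2(M_V/\nmb+1)$ in \eqref{eq:step_size_expt} is chosen precisely so that, after multiplication by the $7/6$ inverse-coupling factor, the $L$-dependence collapses cleanly to the $1/(6L^2)$ prefactor appearing in the statement.
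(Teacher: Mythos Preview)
Your proposal is correct and follows essentially the same route as the paper: the paper packages your self-referential inequality and the $\alpha\le \tfrac{1}{7}$ bookkeeping into an intermediate lemma (\cref{lem:wasserstein_dist_sum}), then combines it with \cref{thm:optimality_nonconvex} and the arithmetic--quadratic mean inequality (\cref{lem:am_qm_ineq}), which is exactly your Jensen step. The only cosmetic difference is that you re-derive \cref{lem:wasserstein_dist_sum} inline rather than citing it.
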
 

In \cref{thm:distribution_expt}, we characterize the convergence of the distribution dynamics~\eqref{eq:pop_dynamics} via the average expected Wasserstein distance between the current distribution $\gamma_k$ and the corresponding steady-state distribution $\gammass(u_k)$. Similar to \eqref{eq:average_second_moment_bd}, the upper bound in \eqref{eq:avg_wasserstein_dist} also depends on the number of iterations, the step size, and other problem and algorithm specific constants. As the number of iterations grows, the average expected Wasserstein distance becomes closer to zero, implying that the dynamic distribution approaches the steady-state distribution in the long run. This convergence plays an important role in optimality certificates that couple the distribution dynamics~\eqref{eq:pop_dynamics} and the algorithm~\eqref{eq:stochastic_alg}, see the discussions below \cref{thm:wasserstein_evolution_general,thm:optimality_nonconvex}.

We remark that the Wasserstein metric for the distribution dynamics \eqref{eq:pop_dynamics} in \cref{thm:distribution_expt} is of the same order as the average expected gradient norm $\sum_{k=0}^{T-1} \mathbb{E}[\|\nabla \obj(u_k)\|]/T$ rather than the average expected second moment of gradients in \cref{thm:optimality_nonconvex}. Given the analogy between optimization and sampling, readers may wonder why the convergence rate \eqref{eq:avg_wasserstein_dist} does not match the $\mathcal{O}(1/\sqrt{T})$ rate of Langevin dynamics for sampling from a target distribution \citep[see, e.g.,][]{chau2021stochastic}. 
Apart from the distinction in problem setups, another major reason for this difference in convergence rates lies in the coupling of the distribution dynamics~\eqref{eq:pop_dynamics} and the algorithm~\eqref{eq:stochastic_alg}. As characterized by \eqref{eq:dist_square_sum_gd}, the incurred cumulative Wasserstein distance is related to the second moments of gradients. Since the nonconvex objective of problem~\eqref{eq:dd_opt_problem} limits the achievable order of the average second moment (namely, $\mathcal{O}(1/\sqrt{T})$, see \cref{thm:optimality_nonconvex}), the order of the average Wasserstein distance between distributions is also restricted.

\subsection{Optimality with high probability}\label{subsec:optimality_hp}
The convergence guarantee in \cref{thm:optimality_nonconvex} holds in expectation. It reflects the average performance pertaining to the interplay between the stochastic algorithm~\eqref{eq:stochastic_alg} and the distribution dynamics~\eqref{eq:pop_dynamics}. However, some cases (e.g., with constraints on computational resources or time) allow only a single trial or a few trials of the stochastic algorithm, rendering the occurrence of an extreme outcome a key concern, particularly in high-stakes scenarios, such as elections motivated in \cref{exm:political_position}. In this regard, another angle for characterizing algorithmic and distributional behaviors is convergence with high probability. Our goal is to quantify the confidence level that a stochastic algorithm yields satisfactory solutions when applied to distribution dynamics after a single trial involving many iterations.

To establish high-probability guarantees, we need an assumption that differs from \cref{assump:var_grad} and is instead related to the distribution of the gradient noise. Recall from \eqref{eq:unbiased_batch_grad} in \cref{lem:mini_batch_grad} that the mini-batch gradient $\nablabatch{k} \obj(u_k)$ is an unbiased estimate of the approximate gradient $\widenabla \obj(u_k)$. Let
\begin{equation}\label{eq:batch_grad_stoch_err}
	\xi_k = \nablabatch{k} \obj(u_k) - \widenabla \obj(u_k) 
\end{equation}
be the noise in $\nablabatch{k} \obj(u_k)$ relative to $\widenabla \obj(u_k)$. Hence, $\EXPT{\xi_k | \mathcal{F}_{k-1}} = 0$, where $\mathcal{F}_{k-1}$ is the $\sigma$-algebra generated by $\nablabatch{0} \obj(u_0),\ldots,\nablabatch{k-1} \obj(u_{k-1})$. Specifically, we assume that $\xi_k$ is sub-Gaussian, i.e., with a tail dominated by the tail of a Gaussian distribution. This specification is formalized by the following assumption.

\begin{assumption}\label{assump:sub_gaussian_noise}
	When conditioned on $\calF_{k-1}$, the noise $\xi_k$ \eqref{eq:batch_grad_stoch_err} is sub-Gaussian, i.e., there exists $\sigmamb > 0$ such that for any $u_k \in \mathbb{R}^n$, 
	\begin{equation}\label{eq:subgaussian_exp}
		\EXPT{\exp\left(\frac{\|\xi_k\|^2}{\sigmamb^2}\right)\bigg| \calF_{k-1}} \leq \exp(1).
	\end{equation}
\end{assumption}

Compared to \cref{assump:var_grad} on the variance, \cref{assump:sub_gaussian_noise} specifies that the gradient noise $\xi_k$ is light-tailed. Building on Jensen's inequality (i.e., $\forall X \in \mathbb{R}, \exp(\E[X]) \leq \E[\exp(X)]$), \cref{assump:sub_gaussian_noise} implies that the variance of $\nablabatch{k} \obj(u_k)$ admits a constant upper bound $\sigmamb^2$, which is a bit stronger than \cref{assump:var_grad}. In practice, \cref{assump:sub_gaussian_noise} can be satisfied when the noise in the sample gradient corresponding to each individual is sub-Gaussian, or when the mini-batch size $\nmb$ is large enough so that the central limit theorem ensures that $\xi_k$ is close to Gaussian. Such light-tailed sub-Gaussian noises are considered in \cite{harvey2019tight,li2020high} to analyze the behaviors of stochastic gradient descent for static problems. Further, self-normalized concentration inequalities provide a promising avenue for handling heavy-tailed (e.g., sub-Weibull) noises, see \cite{madden2024high,li2022high}.

We establish the high-probability guarantee of our stochastic algorithm~\eqref{eq:stochastic_alg} when applied to the distribution dynamics~\eqref{eq:pop_dynamics}. 
The involved variables, parameters, and constants are the same as those in \cref{thm:optimality_nonconvex}. The proof leverages the characterization of the cumulative squared Wasserstein distances in \cref{thm:wasserstein_evolution_general} and some high-probability bounds on terms involving $\xi_k$. 

\begin{theorem}\label{thm:optimality_nonconvex_high_prob}
	Suppose that \cref{assump:stable_system,assump:obj_property,assump:obj_rand_Lipschitz,assump:sub_gaussian_noise} hold. Let $\eta$ be chosen such that
	\begin{equation}\label{eq:step_size_hp}
		0 < \eta \leq \frac{1}{\sqrt{T}} \cdot \min\left(\frac{1}{6L}, \sqrt{\frac{1-\rho_1}{21\rho_2 L^2}} \right).
	\end{equation}
	The stochastic algorithm \eqref{eq:stochastic_alg} applied to the distribution dynamics \eqref{eq:pop_dynamics} guarantees that for any fixed $\tau \in (0,1)$, with probability at least $1-\tau$,
	\begin{align}\label{eq:average_second_moment_high_prob}
		\frac{1}{T} \sum_{k=0}^{T-1} \|\nabla \obj(u_k)\|^2 \leq \frac{16(\obj(u_0) - \obj^*)}{\eta T} + 24\eta L\sigmamb^2 \left(1+\ln \frac{2}{\tau}\right) + \bigO{\frac{1}{T}\left(1+\ln \frac{1}{\tau}\right)}.
	\end{align}
	By selecting $\eta$ as the upper bound in \eqref{eq:step_size_hp}, the order of the right-hand side of \eqref{eq:average_second_moment_high_prob} is
	\begin{equation*}
		\frac{1}{T} \sum_{k=0}^{T-1} \|\nabla \obj(u_k)\|^2 = \bigO{\frac{1}{\sqrt{T}} \left(1 + \ln \frac{1}{\tau}\right)}.
	\end{equation*}	
\end{theorem}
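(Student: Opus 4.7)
The plan is to mirror the expectation-level proof of \cref{thm:optimality_nonconvex} while replacing each application of the tower rule with a high-probability concentration inequality for martingale differences, enabled by the sub-Gaussian hypothesis \cref{assump:sub_gaussian_noise}. I would start from the $L_{\obj}$-smoothness descent inequality along $u_{k+1} = u_k - \eta \nablabatch{k}\obj(u_k)$, decompose the stochastic gradient as $\nablabatch{k}\obj(u_k) = \nabla \obj(u_k) + e_k + \xi_k$ using \eqref{eq:error_grad} and \eqref{eq:batch_grad_stoch_err}, apply $\|\nablabatch{k}\obj(u_k)\|^2 \le 3\bigl(\|\nabla \obj(u_k)\|^2 + \|e_k\|^2 + \|\xi_k\|^2\bigr)$ to the smoothness quadratic, and telescope from $k=0$ to $T-1$ using $\obj(u_T) \ge \obj^*$. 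This yields a pathwise inequality of the schematic form
\begin{equation*}
    \eta \left(1 - \frac{3\eta L_{\obj}}{2}\right) \sum_{k=0}^{T-1} \|\nabla \obj(u_k)\|^2 \le \obj(u_0) - \obj^* + (\text{bias}) + (\text{noise}),
\end{equation*}
in which the bias aggregates the $-\eta \langle \nabla \obj(u_k), e_k\rangle$ and $\tfrac{3\eta^2 L_{\obj}}{2}\|e_k\|^2$ contributions, and the noise aggregates the $-\eta \langle \nabla \obj(u_k), \xi_k\rangle$ and $\tfrac{3\eta^2 L_{\obj}}{2}\|\xi_k\|^2$ contributions.

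To control the bias, I would split $|\langle \nabla \obj(u_k), e_k\rangle|$ by Young's inequality and invoke \cref{lem:grad_err_bound} to obtain $\|e_k\|^2 \le L^2 W_1(\gamma_k, \gammass(u_k))^2$. The cumulative Wasserstein sum can then be replaced by the coupled bound \eqref{eq:dist_square_sum_gd} in \cref{thm:wasserstein_evolution_general}, and splitting $\|\nablabatch{k}\obj(u_k)\|^2$ once more produces a self-referential inequality where $\sum_k \|e_k\|^2$ appears on both sides. The step-size cap \eqref{eq:step_size_hp} is tailored precisely so that this self-reference can be absorbed, leaving only residual contributions of $\sum_k \|\nabla \obj(u_k)\|^2$ and $\sum_k \|\xi_k\|^2$ on the right. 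This part reuses essentially the same algebra as in \cref{thm:optimality_nonconvex}, adapted pathwise rather than in expectation.

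For the noise, \cref{assump:sub_gaussian_noise} supplies two concentration tools. Because $\nabla \obj(u_k)$ is $\calF_{k-1}$-measurable and $\xi_k$ is conditionally $\sigmamb$-sub-Gaussian, the increment $\langle \nabla \obj(u_k), \xi_k\rangle$ is a conditionally sub-Gaussian martingale difference with parameter $\sigmamb \|\nabla \obj(u_k)\|$, so a Laplace-transform argument followed by Markov's inequality gives, with probability at least $1 - \tau/2$, $\bigl|\sum_k \langle \nabla \obj(u_k), \xi_k\rangle\bigr| \le \tfrac{\lambda \sigmamb^2}{2}\sum_k \|\nabla \obj(u_k)\|^2 + \tfrac{1}{\lambda}\ln(4/\tau)$ for any fixed $\lambda > 0$; choosing $\lambda$ small enough that $\eta \lambda \sigmamb^2$ fits inside the budget on the left-hand side absorbs the first piece. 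For $\sum_k \|\xi_k\|^2$, the tower property applied to \eqref{eq:subgaussian_exp} gives $\EXPT{\exp\bigl(\sum_k \|\xi_k\|^2 / \sigmamb^2\bigr)} \le e^T$, and a direct Chernoff bound then yields $\sum_k \|\xi_k\|^2 \le \sigmamb^2 (T + \ln(2/\tau))$ with probability at least $1 - \tau/2$. A union bound combines the two events; substituting back and dividing by $\eta T$ produces the bound \eqref{eq:average_second_moment_high_prob}.

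The main obstacle I anticipate is the simultaneous calibration of three small-parameter budgets: the Young's-inequality weight for the bias cross term, the Laplace parameter $\lambda$ for the noise cross term, and the step size $\eta$ itself. All three must be chosen consistently so that every cross-term contribution to $\sum_k \|\nabla \obj(u_k)\|^2$ on the right stays strictly below the leading coefficient on the left, and so that the self-referential $\sum_k \|e_k\|^2$ term coming through \cref{thm:wasserstein_evolution_general} can be absorbed. The two-piece form of \eqref{eq:step_size_hp}, with one piece governed by $L$ and the other by $\rho_1$, $\rho_2$, $L^2$, reflects exactly these two constraints, and the delicate bookkeeping required to verify that concrete choices realize them is what distinguishes this argument from a vanilla high-probability SGD analysis.
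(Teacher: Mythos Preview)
Your proposal is correct and follows essentially the same route as the paper: descent lemma, the decomposition $\nablabatch{k}\obj(u_k)=\nabla\obj(u_k)+e_k+\xi_k$, pathwise control of $\sum_k\|e_k\|^2$ via \cref{lem:grad_err_bound} and \cref{thm:wasserstein_evolution_general} with self-reference absorbed through \eqref{eq:step_size_hp}, and the two high-probability noise bounds (martingale Laplace bound for the cross term, Chernoff for $\sum_k\|\xi_k\|^2$) combined by a union bound. The only cosmetic difference is that the paper obtains the bound on $\sum_k\|\xi_k\|^2$ via H\"older's inequality on $\E\bigl[\exp\bigl(\sum_k\|\xi_k\|^2/(T\sigmamb^2)\bigr)\bigr]$ rather than your direct tower-rule argument, but both yield the same $T\sigmamb^2(1+\ln(2/\tau))$ estimate.
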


The upper bound on the step size in \eqref{eq:step_size_hp} is of the same order as that of \eqref{eq:step_size_expt} in \cref{thm:optimality_nonconvex}, although constant coefficients differ. In the stochastic optimization literature \citep{ghadimi2013stochastic,bottou2018optimization}, this bound is typically set as a function of the number of iterations to characterize the convergence rate. 
The convergence measure in \cref{thm:optimality_nonconvex_high_prob} is still the average second moment of gradients. In contrast to \cref{thm:optimality_nonconvex}, however, the measure here does not involve expectation anymore. The upper bound \eqref{eq:average_second_moment_high_prob} scales inversely with the square root of the number of iterations $T$. Moreover, it features a logarithmic dependence on the inverse of the failure probability $\tau$. This dependence is favorable particularly when the desired confidence level is high, i.e., when $\tau$ is small. For example, $\tau=10^{-4}$ leads to $\ln \frac{1}{\tau} = 4$, which translates to a moderate increase in magnitude. Hence, under the appropriate \cref{assump:sub_gaussian_noise}, similar convergence results as \cref{thm:optimality_nonconvex} also hold with high probability. 

Analogous to \cref{thm:distribution_expt}, we offer a high-probability characterization of the convergence of the distribution dynamics \eqref{eq:pop_dynamics} when driven by a decision-maker as per \eqref{eq:stochastic_alg}.
\begin{theorem}\label{thm:distribution_high_prob}
	Let the conditions of \cref{thm:optimality_nonconvex_high_prob} hold. For any fixed $\tau \in (0,1)$, with probability at least $1-\tau$,
	\begin{equation}\label{eq:wasserstein_dist_avg_hp}
		\frac{1}{T} \sum_{k=0}^{T-1} W_1(\gamma_k, \gammass(u_k)) \leq \sqrt{\textstyle \frac{1}{6L^2} \left(\frac{16(\obj(u_0) \!-\! \obj^*)}{\eta T} \!+\! 24\eta L\sigmamb^2 \left(1\!+\!\ln \frac{2}{\tau}\right) \right)} + \bigO{\textstyle \frac{1}{T^\frac{1}{2}} \sqrt{1\!+\!\ln \frac{1}{\tau}}}.
	\end{equation}
	By choosing $\eta$ as the upper bound in \eqref{eq:step_size_hp}, the order of the right-hand side of \eqref{eq:wasserstein_dist_avg_hp} is
	\begin{equation*}
		\frac{1}{T} \sum_{k=0}^{T-1} W_1(\gamma_k, \gammass(u_k)) = \bigO{\frac{1}{T^\frac{1}{4}} \sqrt{1+\ln \frac{1}{\tau}}}.
	\end{equation*}
\end{theorem}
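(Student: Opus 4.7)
The plan is to mirror the argument of \cref{thm:distribution_expt}, replacing each expectation-based bound by its high-probability counterpart. I start with the Cauchy--Schwarz (equivalently, Jensen) inequality
\begin{equation*}
    \frac{1}{T}\sum_{k=0}^{T-1} W_1(\gamma_k, \gammass(u_k)) \leq \sqrt{\frac{1}{T}\sum_{k=0}^{T-1} W_1(\gamma_k, \gammass(u_k))^2},
\end{equation*}
which reduces the task to a high-probability upper bound on the average squared Wasserstein distance $\tfrac{1}{T}\sum_k W_1(\gamma_k,\gammass(u_k))^2$. Once such a bound is in hand, I apply $\sqrt{a+b}\leq\sqrt{a}+\sqrt{b}$ and substitute the step-size choice \eqref{eq:step_size_hp} to obtain the claimed $\bigO{T^{-1/4}\sqrt{1+\ln(1/\tau)}}$ rate.

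For the high-probability bound on $\sum_k W_1^2$, I take as starting point \cref{thm:wasserstein_evolution_general}, which supplies the deterministic inequality $\sum_k W_1^2 \leq \tfrac{V_0^2}{1-\rho_1} + \tfrac{\eta^2\rho_2}{1-\rho_1}\sum_k\|\nablabatch{k}\obj(u_k)\|^2$. Decomposing $\nablabatch{k}\obj(u_k) = \nabla\obj(u_k) + e_k + \xi_k$, applying $(a+b+c)^2 \leq 3(a^2+b^2+c^2)$, and using $\|e_k\|^2 \leq L^2 W_1(\gamma_k,\gammass(u_k))^2$ from \cref{lem:grad_err_bound}, I arrive at a schematic inequality of the form $\sum_k W_1^2 \leq C_1 V_0^2 + C_2\eta^2 \sum_k\|\nabla\obj(u_k)\|^2 + C_3\eta^2\sum_k W_1^2 + C_4\eta^2\sum_k\|\xi_k\|^2$. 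The step-size bound \eqref{eq:step_size_hp} is calibrated precisely so that $C_3\eta^2 \leq 1/2$, which allows me to absorb the $\sum_k W_1^2$ term on the right-hand side.

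Two remaining sums must now be bounded with high probability. For $\sum_k\|\nabla\obj(u_k)\|^2$, I directly invoke \eqref{eq:average_second_moment_high_prob} from \cref{thm:optimality_nonconvex_high_prob}, which holds with probability at least $1-\tau/2$; this is the origin of the main term $\tfrac{16(\obj(u_0)-\obj^*)}{\eta T} + 24\eta L\sigmamb^2(1+\ln\tfrac{2}{\tau})$ appearing inside the square root of \eqref{eq:wasserstein_dist_avg_hp}. For $\sum_k\|\xi_k\|^2$, the conditional sub-Gaussianity of \cref{assump:sub_gaussian_noise} yields a sub-exponential tail on $\|\xi_k\|^2$ given $\calF_{k-1}$, so a Bernstein-type concentration inequality for martingale difference sequences delivers $\sum_k\|\xi_k\|^2 \leq T\sigmamb^2 + \bigO{\sigmamb^2\sqrt{T\ln(1/\tau)} + \sigmamb^2\ln(1/\tau)}$ with probability at least $1-\tau/2$. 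A union bound combines the two events, so that with probability at least $1-\tau$ both estimates hold simultaneously; plugging them back into the bound for $\sum_k W_1^2$, dividing by $T$, extracting the square root, and substituting $\eta$ completes the argument.

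The main obstacle will be the bookkeeping required to make the coefficients condense into the precise factor $\tfrac{1}{6L^2}$ of \eqref{eq:wasserstein_dist_avg_hp} — which is what dictates the specific numerical constants in the step-size condition \eqref{eq:step_size_hp} — and the care needed to formalize the sub-exponential martingale concentration for $\sum_k\|\xi_k\|^2$, since \cref{assump:sub_gaussian_noise} only supplies a conditional tail and the stochastic samples at step $k$ depend on the entire past history of decisions through \eqref{eq:pop_dynamics}. Once these pieces are in place, the remaining algebra is routine and parallels the proof of \cref{thm:distribution_expt}, with \cref{thm:optimality_nonconvex_high_prob} taking the role that \cref{thm:optimality_nonconvex} played in the expectation setting.
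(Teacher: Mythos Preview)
Your proposal is correct and follows essentially the same route as the paper: reduce to the average squared Wasserstein distance via Cauchy--Schwarz, derive (what the paper packages as \cref{lem:wasserstein_dist_sum_hp}) by expanding $\|\nablabatch{k}\obj(u_k)\|^2$ and absorbing the $W_1^2$ term using \eqref{eq:step_size_hp}, then union-bound two high-probability events for $\sum_k\|\nabla\obj(u_k)\|^2$ and $\sum_k\|\xi_k\|^2$.

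The one place where you take a longer path is the tail bound on $\sum_k\|\xi_k\|^2$. You propose a Bernstein-type martingale concentration for the centered sub-exponential increments, and you flag this as a main obstacle. The paper avoids this entirely: it observes that the conditional sub-Gaussian assumption \eqref{eq:subgaussian_exp} combined with H\"older's inequality gives $\EXPT{\exp\bigl(\sum_k\|\xi_k\|^2/(T\sigmamb^2)\bigr)}\leq e$, and a single application of Markov's inequality then yields $\sum_k\|\xi_k\|^2\leq T\sigmamb^2(1+\ln(1/\tau_2))$ with probability at least $1-\tau_2$ (this is \eqref{eq:err_sec_mom_hp} in \cref{lem:err_inner_prod_square}, already used inside the proof of \cref{thm:optimality_nonconvex_high_prob}). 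This bound is weaker than what Bernstein would give---it carries a factor $T\ln(1/\tau)$ rather than $\sqrt{T\ln(1/\tau)}$---but after dividing by $T^2$ in \eqref{eq:wasserstein_dist_avg_hp_mid} it lands in the $\bigO{1/T}$ remainder anyway, so the extra sharpness buys nothing here and the simpler argument suffices.
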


\cref{thm:distribution_high_prob} ensures that with high probability, the average Wasserstein distance between the current distribution and the associated steady-state distribution approaches zero at a rate of $\mathcal{O}(1/T^{\frac{1}{4}})$ as the number of iterations $T$ increases. Moreover, the upper bound on this average distance exhibits a polylogarithmic dependence on the inverse of the failure probability $\tau$. Overall, the distributions $\gamma_k$ and $\gammass(u_k)$ are close on average with high probability.

\subsection{Generalization in a finite-sample regime}\label{subsec:generalization}
In this subsection, we examine a finite-sample regime, where access is limited to a specific set of samples rather than allowing unrestricted sampling from the full distribution as in problem~\eqref{eq:dd_opt_problem}. When applied in this context, the stochastic algorithm \eqref{eq:stochastic_alg} yields decisions that essentially optimize an empirical objective associated with these samples. We are interested in the generalization guarantees on how such decisions perform in the problem involving the overall distribution.

This setup is of interest from an application perspective. Various reasons, e.g., the strategy of seeding trials or the restricted sampling due to privacy concerns, may cause us to only work with specific samples from the distribution. Yet, we hope the decisions obtained from analyzing these finite samples can generalize to the distribution-level problem. We will show how our distributional perspective allows seamless integration of the measure concentration argument \citep{fournier2015rate} into performance analysis, thus facilitating generalization guarantees.

The distributions of the initial state and the exogenous input are $\mu_0$ and $\mu_d$, respectively. Moreover, let $p^1_0, \ldots, p^N_0 \sim \mu_0$ and $d^1, \ldots, d^N \sim \mu_d$ be independent and identically distributed samples drawn from these distributions, where $N \in \mathbb{N}_{+}$ is the total number of samples. The empirical distributions of the initial state and the exogenous input are $\mu^N_0 = \frac{1}{N} \sum_{i=1}^{N} \delta_{p^i_0}$ and $\mu^N_d = \frac{1}{N} \sum_{i=1}^{N} \delta_{d^i}$, respectively. At every time $k$, the decision affects the overall distribution $\mu_k$ as per \eqref{eq:pop_dynamics} and in turn influences the resulting empirical distribution $\mu^N_k$. In a finite-sample regime, the empirical distribution $\mu^N_k$ rather than the full distribution $\mu_k$ is accessible for sampling. For this setting, the online stochastic algorithm \eqref{eq:stochastic_alg} generates decisions $\{u_0^N,\ldots,u_{T-1}^N\}$ that solve the following finite-sample empirical problem
\begin{equation}\label{eq:dd_opt_problem_sample}
	\min_{u\in \mathbb{R}^n} \quad \frac{1}{N} \sum_{i=1}^{N} \Phi(u, p^i_{\textup{ss}}) = \frac{1}{N} \sum_{i=1}^{N} \Phi(u, h(u,d^i)) \triangleq \obj^N(u).
\end{equation}
Our focus is on establishing generalization guarantees on the performance of the decisions $\{u_0^N,\ldots,\allowbreak u_{T-1}^N\}$ in the distribution-level problem~\eqref{eq:dd_opt_problem}.





\cref{thm:optimality_nonconvex} allows us to characterize the stationarity of decisions concerned with the empirical objective $\obj^N$ in \eqref{eq:dd_opt_problem_sample}. In terms of the distribution-level performance, however, we center on the stationarity in the sense of $\obj$ in \eqref{eq:dd_opt_reduced_obj}, i.e., the reduced objective of problem~\eqref{eq:dd_opt_problem}. To this end, we leverage the following decomposition
\begin{equation}\label{eq:pop_grad_decomposition}
	\|\nabla \obj(u_k^N)\|^2 \leq 2\underbrace{\|\nabla \obj(u_k^N) - \nabla \obj^N(u_k^N)\|^2}_{\numcircled{1}} + 2\underbrace{\|\nabla \obj^N(u_k^N)\|^2}_{\numcircled{2}}.
\end{equation}
The left-hand side of \eqref{eq:pop_grad_decomposition} is the squared norm of the gradient $\nabla \obj(u_k^N)$, which reflects the quality of decisions for the distribution-level problem~\eqref{eq:dd_opt_problem}. Term \numcircled{1} in \eqref{eq:pop_grad_decomposition} is the squared norm of the difference between the empirical gradient $\nabla \obj^N$ and the gradient $\nabla \obj$, and it represents the generalization error \citep{li2022high}. Term \numcircled{2} in \eqref{eq:pop_grad_decomposition} is the squared norm of the empirical gradient, constituting the optimization error for the empirical problem~\eqref{eq:dd_opt_problem_sample}.

The following lemma gives a high-probability bound on the generalization error averaged over iterations. It is built on the measure concentration result derived in Wasserstein distances, see \citet[Theorem~2]{fournier2015rate}. Recall that $r$ is the size of the exogenous input $d$.

\begin{lemma}\label{lem:general_err}
	Let \cref{assump:stable_system,assump:obj_property,assump:obj_rand_Lipschitz} hold. Suppose that there exist $\theta > 1$ and $\kappa > 0$ such that $\mathcal{E}_{\theta,\kappa}(\mu_d) \triangleq \int_{\mathbb{R}^r} e^{\kappa |x|^\theta} \ud \mu_d(x)$ is finite. For any fixed $\tau \in (0,1)$, with probability at least $1 - \frac{\tau}{2}$, the decisions $\{u_0^N, \ldots, u_{T-1}^N\}$ satisfy
	\begin{align*}
		\frac{1}{T} \sum_{k=0}^{T-1} &\|\nabla \obj(u_k^N) - \nabla \obj^N(u_k^N)\|^2 \leq \\
		&
		\begin{cases}
			L^2 \left(L_h^d \sqrt{\lambda_{\max}(P)}\!+\!1\right)^2 \left(\frac{1}{c_2 N} \ln\left(\frac{2c_1}{\tau} \right)\right)^{\frac{2}{r}}, & \text{if } N \geq \frac{1}{c_2} \ln\left(\frac{2c_1}{\tau} \right), \\
			L^2 \left(L_h^d \sqrt{\lambda_{\max}(P)}\!+\!1\right)^2 \left(\frac{1}{c_2 N} \ln\left(\frac{2c_1}{\tau} \right)\right)^{\frac{2}{\theta}}, & \text{if } 1 \leq N < \frac{1}{c_2} \ln\left(\frac{2c_1}{\tau} \right),
		\end{cases}
	\end{align*}
	where $c_1$ and $c_2$ are positive constants that only depend on $r,\theta,\kappa$, and $\mathcal{E}_{\theta,\kappa}(\mu_d)$.
\end{lemma}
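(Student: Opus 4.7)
The plan is to bound the pointwise gradient difference $\|\nabla \obj(u) - \nabla \obj^N(u)\|$ uniformly in $u$ by a quantity depending only on the Wasserstein distance $W_1(\mu_d, \mu_d^N)$ between the true and empirical distributions of the exogenous input, and then invoke a measure concentration inequality from \cite{fournier2015rate} to convert this into the claimed high-probability statement. The key observation is that the uniform bound I derive is entirely decoupled from the (random) iterates $u_k^N$, so no union bound over iterations will be necessary.

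First, I would use the chain rule as in \eqref{eq:ss_distr_stoch_grad} to write both gradients as expectations of a common integrand $\psi(u,p,d) \triangleq \nabla_u \Phi(u,p) + \nabla_u h(u,d) \nabla_p \Phi(u,p)$: $\nabla \obj(u) = \E_{(p,d) \sim \gammass(u)}[\psi(u,p,d)]$ with $\gammass(u) = (h(u,\cdot),\id)_{\#}\mu_d$, and $\nabla \obj^N(u) = \E_{(p,d) \sim \gammass^N(u)}[\psi(u,p,d)]$ with $\gammass^N(u) \triangleq (h(u,\cdot),\id)_{\#}\mu_d^N$. Following the same Lipschitz-in-$(p,d)$ argument that underlies \cref{lem:grad_err_bound} (dualizing the vector-valued Kantorovich--Rubinstein inequality by projecting $\psi$ onto unit vectors), I obtain $\|\nabla \obj(u) - \nabla \obj^N(u)\| \leq L \, W_1(\gammass(u), \gammass^N(u))$, where the Wasserstein distance uses the metric $c$ in \eqref{eq:metric_mix_norm}.

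Second, I would transfer this to a bound involving only $W_1(\mu_d, \mu_d^N)$. Pushing any coupling of $\mu_d$ and $\mu_d^N$ through the componentwise map $(h(u,\cdot),\id)$ yields a valid coupling of $\gammass(u)$ and $\gammass^N(u)$; combining the $L_h^d$-Lipschitz continuity of $h(u,\cdot)$ in $d$ with the norm comparison $\|\cdot\|_P \leq \sqrt{\lambda_{\max}(P)}\,\|\cdot\|$ gives
\begin{equation*}
W_1(\gammass(u), \gammass^N(u)) \leq \bigl(L_h^d \sqrt{\lambda_{\max}(P)} + 1\bigr) \, W_1(\mu_d, \mu_d^N).
\end{equation*}
Since this right-hand side is independent of $u$, squaring and averaging over $k = 0, \ldots, T-1$ preserves the same bound for every (random) iterate $u_k^N$.

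Finally, I would invoke \cite[Theorem~2]{fournier2015rate}, which under $\mathcal{E}_{\theta,\kappa}(\mu_d) < \infty$ yields the tail inequality $\PR{W_1(\mu_d, \mu_d^N) \geq \epsilon} \leq c_1 \exp(-c_2 N a(\epsilon))$ with $a(\epsilon) = \epsilon^r$ for $\epsilon \leq 1$ and $a(\epsilon) = \epsilon^\theta$ for $\epsilon > 1$. Setting this tail equal to $\tau/2$ and inverting $a(\cdot)$ gives $\epsilon = \bigl(\tfrac{1}{c_2 N} \ln(2c_1/\tau)\bigr)^{1/r}$ in the large-$N$ regime $N \geq \tfrac{1}{c_2}\ln(2c_1/\tau)$ (so $\epsilon \leq 1$), and $\epsilon = \bigl(\tfrac{1}{c_2 N} \ln(2c_1/\tau)\bigr)^{1/\theta}$ otherwise; squaring reproduces exactly the two-case bound in the statement. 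I expect the main obstacle to be the first step, where extending the scalar Kantorovich--Rubinstein argument to the vector-valued integrand $\psi$ requires care in tracking the constant $L$; once the uniform-in-$u$ transfer is in place, the remainder reduces to routine inversion of the Fournier--Guillin tail.
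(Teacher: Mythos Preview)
Your proposal is correct and follows essentially the same route as the paper: bound $\|\nabla\obj(u)-\nabla\obj^N(u)\|$ by $L\,W_1(\gammass(u),\gammass^N(u))$ via the Kantorovich--Rubinstein argument of \cref{lem:grad_err_bound}, push couplings through $(h(u,\cdot),\id)$ to reduce to $(L_h^d\sqrt{\lambda_{\max}(P)}+1)\,W_1(\mu_d,\mu_d^N)$, note this is uniform in $u$ so averaging over $k$ is trivial, and then invert the Fournier--Guillin tail bound with failure probability $\tau/2$. The paper's proof is identical in structure; your anticipated obstacle (tracking $L$ in the vector-valued Lipschitz step) is handled exactly as in \cref{lem:grad_err_bound}.
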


The finite moment condition (namely, on $\mathcal{E}_{\theta,\kappa}(\mu_d)$) in \cref{lem:general_err} specifies the degree to which the distribution $\mu_d$ is light-tailed. This condition is common in the literature \citep{fournier2015rate,kuhn2024Distributionally}. \cref{lem:general_err} encompass both the low-data and high-data regimes.


We now characterize the generalization performance of our stochastic algorithm~\eqref{eq:stochastic_alg}. We focus on the decisions $\{u_0^N,\ldots,u_{T-1}^N\}$ obtained by the algorithm~\eqref{eq:stochastic_alg} in solving the finite-sample problem~\eqref{eq:dd_opt_problem_sample}, with the mini-batch size $\nmb$ satisfying $1 \leq \nmb \leq N$. The quality of these decisions is evaluated by the average second moment of gradients of the objective $\obj$, which involves the distributions $\mu_0$ and $\mu_d$ and is associated with problem~\eqref{eq:dd_opt_problem}. Note that $r$ is the size of $d$, $\theta > 1$ is a constant present in $\mathcal{E}_{\theta,\kappa}(\mu_d)$, and $c_1$ and $c_2$ are constants specified in \cref{lem:general_err}.

\begin{theorem}\label{thm:generalization_pop}
	Let \cref{assump:stable_system,assump:obj_property,assump:obj_rand_Lipschitz,assump:var_grad} and the conditions of \cref{lem:general_err} hold. Consider the use of the stochastic algorithm~\eqref{eq:stochastic_alg} to solve the finite-sample problem~\eqref{eq:dd_opt_problem_sample}, with the step size chosen by \eqref{eq:step_size_hp}. Let $\tau \in (0,1)$ be fixed. When $N \geq \frac{1}{c_2} \ln\left(\frac{2c_1}{\tau} \right)$, with probability at least $1-\tau$, 
	\begin{align*}
		\frac{1}{T} \sum_{k=0}^{T-1} \|\nabla \obj(u_k^N)\|^2 \leq& 
		2L^2 \left(L_h^d\sqrt{\lambda_{\max}(P)}\!+\!1\right)^2 \left(\frac{1}{c_2 N} \ln\left(\frac{2c_1}{\tau} \right)\right)^{\frac{2}{r}} \\
		& + \frac{32(\obj(u_0) \!-\! \obj^*)}{\eta T} + 48\eta L\sigmamb^2 \left(1+\ln \frac{4}{\tau}\right) + \bigO{\frac{1}{T}\left(1 + \ln\frac{1}{\tau}\right)},
	\end{align*}
	and the overall order is
	\begin{equation*}
		\frac{1}{T} \sum_{k=0}^{T-1} \|\nabla \obj(u_k^N)\|^2 = \bigO{\frac{1}{N^\frac{2}{r}} \left(\ln \frac{1}{\tau}\right)^\frac{2}{r}} + \bigO{\frac{1}{\sqrt{T}}\left(1+\ln \frac{1}{\tau}\right)}.
	\end{equation*}
	When $1 \leq N < \frac{1}{c_2} \ln\left(\frac{2c_1}{\tau} \right)$, with probability at least $1-\tau$,
	\begin{align*}
		\frac{1}{T} \sum_{k=0}^{T-1} \|\nabla \obj(u_k^N)\|^2 \leq& 
		2L^2 \left(L_h^d\sqrt{\lambda_{\max}(P)}\!+\!1\right)^2 \left(\frac{1}{c_2 N} \ln\left(\frac{2c_1}{\tau} \right)\right)^{\frac{2}{\theta}} \\
		& + \underbrace{\frac{32(\obj(u_0) \!-\! \obj^*)}{\eta T} + 48\eta L\sigmamb^2 \left(1+\ln \frac{4}{\tau}\right)}_{\sim \bigO{(1+\ln(1/\tau))/\sqrt{T}}} + \bigO{\frac{1}{T}\left(1 + \ln\frac{1}{\tau}\right)}.
	\end{align*}
\end{theorem}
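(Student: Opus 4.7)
The plan is to combine the bias--variance decomposition in \eqref{eq:pop_grad_decomposition} with two high-probability bounds — one for the statistical (generalization) term and one for the optimization term — and close via a union bound at confidence $1-\tau$.

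First, I would bound term \numcircled{1} in \eqref{eq:pop_grad_decomposition} by directly invoking \cref{lem:general_err} at failure probability $\tau/2$. This yields the $\big(\frac{1}{c_2 N}\ln(2c_1/\tau)\big)^{2/r}$ bound in the regime $N \geq \frac{1}{c_2}\ln(2c_1/\tau)$ and the $\big(\frac{1}{c_2 N}\ln(2c_1/\tau)\big)^{2/\theta}$ bound in the opposite regime, which account for the first summands on the right-hand sides of the two claimed inequalities (after multiplying by the factor of $2$ from \eqref{eq:pop_grad_decomposition}).

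Next, I would handle term \numcircled{2} in \eqref{eq:pop_grad_decomposition} by making the following key observation: running the algorithm~\eqref{eq:stochastic_alg} on the finite-sample problem~\eqref{eq:dd_opt_problem_sample} is equivalent to running it on an instance of the original decision-dependent problem~\eqref{eq:dd_opt_problem} in which the population distributions $\mu_0$ and $\mu_d$ are replaced by their empirical versions $\mu_0^N$ and $\mu_d^N$. The dynamics $f$, steady-state map $h$, and pointwise objective $\Phi$ are unchanged, so \cref{assump:stable_system,assump:obj_property,assump:obj_rand_Lipschitz,assump:sub_gaussian_noise} transfer to this empirical instance with the same (or no worse) constants — the empirical measures have finite support, so the relevant Lipschitz, smoothness, and sub-Gaussian constants are preserved under finite averaging. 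I can therefore invoke \cref{thm:optimality_nonconvex_high_prob} with the step size~\eqref{eq:step_size_hp} and failure probability $\tau/2$, which yields an $\mathcal{O}(1/\sqrt{T})$ bound on $\frac{1}{T}\sum_{k=0}^{T-1}\|\nabla\obj^N(u_k^N)\|^2$ of the form $\frac{16(\obj^N(u_0) - \obj^{N,*})}{\eta T} + 24\eta L \sigmamb^2\big(1+\ln\tfrac{4}{\tau}\big) + \mathcal{O}\!\left(\tfrac{1}{T}(1+\ln\tfrac{1}{\tau})\right)$.

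A union bound across the two events (each with failure probability at most $\tau/2$) then establishes the claim with probability at least $1-\tau$ after substituting back into \eqref{eq:pop_grad_decomposition} and absorbing the factors of $2$ into the leading constants (e.g.\ $16 \to 32$, $24 \to 48$). The main technical obstacle I anticipate is bookkeeping around the gap $\obj^N(u_0) - \obj^{N,*}$ produced by \cref{thm:optimality_nonconvex_high_prob} versus the population gap $\obj(u_0) - \obj^*$ that appears in the theorem statement; I would bridge this either by a short concentration argument showing $|\obj^N(u) - \obj(u)|$ is uniformly small at $u_0$ and at the optimum (which can be absorbed into the $\mathcal{O}\big(\tfrac{1}{T}(1+\ln\tfrac{1}{\tau})\big)$ residual), or by observing that $\obj^{N,*} \leq \obj^N(u^*)$ and again using concentration to relate $\obj^N(u^*)$ to $\obj^*$. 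All of these adjustments preserve the final convergence rates $\mathcal{O}(N^{-2/r}(\ln\tfrac{1}{\tau})^{2/r}) + \mathcal{O}\!\big(\tfrac{1}{\sqrt T}(1+\ln\tfrac{1}{\tau})\big)$ in the high-data regime and the analogous expression with exponent $2/\theta$ in the low-data regime.
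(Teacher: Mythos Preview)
Your proposal is correct and follows essentially the same route as the paper: split via \eqref{eq:pop_grad_decomposition}, bound the generalization term with \cref{lem:general_err} at level $\tau/2$, bound the optimization term by applying \cref{thm:optimality_nonconvex_high_prob} to the empirical instance at level $\tau/2$, and combine by a union bound. You even flag the $\obj^N(u_0)-\obj^{N,*}$ versus $\obj(u_0)-\obj^*$ discrepancy, which the paper's proof simply writes as the population gap without further comment; your proposed fixes are adequate and do not affect the rates.
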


\cref{thm:generalization_pop} quantifies how the performance measure (i.e., the average second moment of the gradient $\nabla \obj(u_k^N)$) scales polynomially with the number of samples $N$ and the number of iterations $T$, as well as polylogarithmically with the inverse of the specified failure probability $\tau$. As $N$ and $T$ increase, this performance measure approaches zero. It implies that the decisions $\{u_0^N,\ldots,u_{T-1}^N\}$ arising from the finite-sample problem~\eqref{eq:dd_opt_problem_sample} generalize to the distribution-level problem~\eqref{eq:dd_opt_problem}.



\section{Case Studies}\label{sec:experiment}
We present two case studies to demonstrate how decision dependence in stochastic optimization can be effectively addressed with our algorithm. The first case study delineates \cref{exm:political_position} in opinion dynamics and involves continuous population distributions. The second example originates from a recommender system context and focuses on discrete distributions over the probability simplex. 
Our code is publicly available\footnote{\url{https://github.com/zyhe/distribution-dynamics-opt}}.

\subsection{Affinity maximization in a polarized population}\label{subsec:polarized_pop}
We revisit \cref{exm:political_position} and consider the interaction of an opportunistic party and a polarized dynamic population. While the case study is streamlined, a relevant real-world example is the U.S.~presidential election \citep{yang2020us}. The ideological position of a party causes a shift in the position of each individual in the population. This party focuses on picking an ideology to gain the most votes. A tractable proxy for this goal is to maximize the steady-state population-wide affinities.

The shift of the individual position is described by the following polarized dynamics model adapted from \cite{hkazla2024geometric,gaitonde2021polarization}
\begin{equation}\label{eq:polarized_dynamics}
	\tilde{p}_{k+1} = \lambda p_k + (1-\lambda) p_0 + \sigma \cdot (p_k^{\top} q) q, \qquad p_{k+1} = \frac{\tilde{p}_{k+1}}{\|\tilde{p}_{k+1}\|}, \qquad p_0 \sim \mu_0, ~ \|p_0\|=1, ~ k \in \mathbb{N},
\end{equation}
where $p_k \in \mathbb{R}^m$ is the position of an individual at time $k$, $q \in \mathbb{R}^m$ is the ideological position of the party, $\lambda \in [0,1]$ is a coefficient for blending the current and initial positions, $\sigma > 0$ regulates the position evolution, and $\mu_0$ is the distribution of initial positions. In \eqref{eq:polarized_dynamics}, the term $(1-\lambda)p_0$ captures the persistent influence of the initial position $p_0$, which is also present in the classical linear Friedkin-Johnsen model \citep{proskurnikov2017tutorial}. The closer $\lambda$ is to $1$, the more an individual sticks to her initial position. Thanks to the normalization step, the position $p_k$ is of unit norm. The coefficients $\lambda$ and $\sigma$ are the same for each individual, and therefore the overall population is homogeneous. 

The polarized dynamics \eqref{eq:polarized_dynamics} feature biased assimilation \citep{dean2022preference}. If a specific individual prefers an ideology $q$ (i.e., $p_k^{\top} q > 0$), then her position will move closer to $q$. Conversely, if the ideology is disliked (i.e., $p_k^{\top} q < 0$), then her position will instead move away from $q$, or more specifically, closer to $-q$. As illustrated in \cref{fig:polarize_ss}, the polarized dynamics \eqref{eq:polarized_dynamics} admit a steady-state position $\pss = h(q,p_0)$, and the location of $\pss$ relative to $q$ depends on the angle between $p^0$ and $q$ and the coefficients $\lambda$ and $\sigma$. 
The affinity of an individual towards an ideology $q$ at time $k$ is given by the inner product $p_k^{\top} q$. We formalize the properties of the polarized dynamics \eqref{eq:polarized_dynamics} in the following theorem.


\begin{theorem}\label{thm:polarized_sens}
	For a fixed ideology $q$, the dynamics \eqref{eq:polarized_dynamics} admit a unique steady state $\pss$ such that $\tilde{p}_\textup{ss} = \lambda \pss + (1-\lambda)p_0 + \sigma \cdot (\pss^\top q) q,~ \pss = \tilde{p}_\textup{ss}/\|\tilde{p}_\textup{ss}\|$. As $\lambda \in [0,1]$ and $\sigma > 0$ increase, $\pss$ becomes closer to $q$ (meaning $\pss^\top q > p_0^\top q$) when $p_0^\top q > 0$, and to $-q$ (meaning $\pss^\top q < p_0^\top q$) when $p_0^\top q < 0$, indicating a stronger steering ability. 
	The steady-state sensitivity $\nabla_q h(q,p_0)$ is
	\begin{equation}\label{eq:sens_polarized_formula}
		\nabla_q h(q,p_0) = - \sigma \left(\pss^{\top}q I + \pss q^\top\right)(I- \pss \pss^{\top})\left[(\lambda I + \sigma q q^\top)(I-\pss \pss^{\top}) - \|\tilde{p}_\textup{ss}\| I\right]^{-1}.
	\end{equation}
\end{theorem}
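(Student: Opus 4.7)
My plan is to dispatch the three parts of the theorem in sequence. Existence and uniqueness of $\pss$ and the monotonicity statement both reduce to a scalar problem obtained by projecting onto the plane $\mathrm{span}(p_0,q)$, while the sensitivity formula is a direct application of the implicit-function expression~\eqref{eq:sens_formula} together with a careful computation of the Jacobian of the normalization map.

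For existence and uniqueness of $\pss$, I would first observe that rearranging the fixed-point condition $\|\tilde{p}_\textup{ss}\|\pss = \lambda\pss + (1-\lambda)p_0 + \sigma(\pss^\top q)q$ yields $(\|\tilde{p}_\textup{ss}\|-\lambda)\pss = (1-\lambda)p_0 + \sigma(\pss^\top q)q$, which confines $\pss$ to $V := \mathrm{span}(p_0,q)$ generically. Within $V$, I would parametrize $\pss$ by its angle $\theta$ to $q$ and introduce $r := \pss^\top q$ and $r_0 := p_0^\top q$. Projecting the fixed-point equation onto $q$ and onto the orthogonal direction in $V$, then enforcing $\|\pss\|=1$, boils the problem down to a single scalar equation $\Psi(\theta;\lambda,\sigma,r_0)=0$. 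Existence follows from an intermediate-value argument applied on the arc between $p_0$ and $q$ (in the hemisphere determined by $\sgn(r_0)$), while uniqueness follows by showing $\Psi$ is strictly monotone in $\theta$ on this arc.

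The monotonicity claim in $\lambda$ and $\sigma$ is obtained by implicitly differentiating $\Psi(\theta;\lambda,\sigma,r_0)=0$. Computing $\partial_\lambda\Psi$, $\partial_\sigma\Psi$, and $\partial_\theta\Psi$ at the fixed point and invoking the implicit function theorem, I would check that the numerators carry one sign and the denominator the opposite sign, yielding $\partial r/\partial\lambda$ and $\partial r/\partial\sigma$ both with the sign of $r_0$. Since the distance between $\pss$ and $q$ (respectively $-q$) is monotone in $r$ on the chosen hemisphere, this is exactly the ``stronger steering'' statement of the theorem.

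The sensitivity formula~\eqref{eq:sens_polarized_formula} follows from~\eqref{eq:sens_formula} applied to $f(p,q,p_0) = \tilde{p}/\|\tilde{p}\|$, with $\tilde{p} = \lambda p + (1-\lambda)p_0 + \sigma(p^\top q)q$. The central calculation is the Jacobian of the normalization: the chain rule gives $\partial f/\partial v = \|\tilde{p}\|^{-1}(I - \tilde{p}\tilde{p}^\top/\|\tilde{p}\|^2)\,\partial\tilde{p}/\partial v$ for $v\in\{p,q\}$, and evaluating at $p=\pss$ collapses the rank-one projector to $I-\pss\pss^\top$ using $\tilde{p}_\textup{ss}/\|\tilde{p}_\textup{ss}\| = \pss$. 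Substituting $\partial\tilde{p}/\partial p = \lambda I + \sigma q q^\top$ and $\partial\tilde{p}/\partial q = \sigma(qp^\top + (p^\top q)I)$, transposing into the paper's convention, plugging into~\eqref{eq:sens_formula}, and cancelling the common factor $\|\tilde{p}_\textup{ss}\|^{-1}$ from the two factors produces~\eqref{eq:sens_polarized_formula}. The main obstacle is verifying that $(\lambda I + \sigma qq^\top)(I-\pss\pss^\top) - \|\tilde{p}_\textup{ss}\|I$ is invertible at $\pss$; this matrix equals $\|\tilde{p}_\textup{ss}\|(\nabla_p f|_{p=\pss} - I)$, so its invertibility is equivalent to $1\notin\mathrm{spec}(\nabla_p f|_{p=\pss})$, which holds because the normalization projects away the one expansive direction $\pss$ of the unnormalized Jacobian $\lambda I + \sigma qq^\top$.
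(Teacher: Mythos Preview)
Your sensitivity computation is essentially identical to the paper's: both differentiate the normalization map, evaluate at $p=\pss$ to collapse the projector to $I-\pss\pss^\top$, and plug into~\eqref{eq:sens_formula}. The paper does not separately justify invertibility of $(\lambda I + \sigma qq^\top)(I-\pss\pss^\top) - \|\tilde{p}_\textup{ss}\| I$ either, so your brief remark is on par.

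Where you diverge is on existence/uniqueness and the $(\lambda,\sigma)$-monotonicity. The paper argues \emph{dynamically}: it shows by induction on $k$ that $\sgn(p_k^\top q)=\sgn(p_0^\top q)$, then proves via a conic-combination lemma that the angle between $p_k$ and $q$ is strictly monotone along the trajectory, so the monotone convergence theorem yields a limit $\pss$, which is automatically the unique fixed point reached from $p_0$. For the parameter dependence, the paper observes that $(1-\lambda)p_0 + \sigma(\pss^\top q)q$ is collinear with $\pss$ and applies the law of sines to the resulting triangle to get $\sin(2\bar\theta)/\sin(\phi-\bar\theta)=2(1-\lambda)/(\sigma\|q\|^2)$, from which the monotonicity in $\lambda$ and $\sigma$ is read off. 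Your route---reduce to a scalar fixed-point equation $\Psi(\theta;\lambda,\sigma,r_0)=0$ in $\mathrm{span}(p_0,q)$, use the intermediate value theorem plus strict monotonicity of $\Psi$ in $\theta$, then implicitly differentiate in $\lambda,\sigma$---is a legitimate and arguably cleaner static alternative. Two caveats: first, your argument establishes uniqueness of the \emph{fixed point} but not that the trajectory $(p_k)$ actually converges to it, which the paper's dynamical proof gives for free and which is the natural reading of ``the dynamics admit a unique steady state''; second, your ``generically'' for $\pss\in\mathrm{span}(p_0,q)$ hides the edge case $\|\tilde{p}_\textup{ss}\|=\lambda$, which you should dispatch explicitly (it only bites at $\lambda=1$). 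Modulo those two points, both approaches are sound; the paper's buys convergence of the iteration, yours buys a more mechanical and reusable scalar reduction.
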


\tikzset{every picture/.style={line width=1pt}} 
\begin{figure}[!tb]%
	\centering
	\subfloat[Not influenced ($p_0^\top q = 0$) \label{fig:polarize_ss_orthogonal}]{
    \resizebox{!}{2.5cm}{
	    \begin{tikzpicture}
	    [x=0.75pt,y=0.75pt,yscale=-1,xscale=1]
	    
		\draw    (90,120.5) -- (89.69,60.05) ;
		\draw [shift={(89.68,58.05)}, rotate = 89.71] [color={rgb, 255:red, 0; green, 0; blue, 0 }  ][line width=0.75]    (10.93,-3.29) .. controls (6.95,-1.4) and (3.31,-0.3) .. (0,0) .. controls (3.31,0.3) and (6.95,1.4) .. (10.93,3.29)   ;
		\draw [color={rgb, 255:red, 76; green, 0; blue, 153 }  ,draw opacity=1 ]   (90,120.5) -- (152.18,120.54) ;
		\draw [shift={(154.18,120.55)}, rotate = 180.04] [color={rgb, 255:red, 76; green, 0; blue, 153 }  ,draw opacity=1 ][line width=0.75]    (10.93,-3.29) .. controls (6.95,-1.4) and (3.31,-0.3) .. (0,0) .. controls (3.31,0.3) and (6.95,1.4) .. (10.93,3.29)   ;

		\draw (64,52.4) node [anchor=north west][inner sep=0.75pt]    {$\textcolor[rgb]{0,0.4,0.8}{p}\textcolor[rgb]{0,0.4,0.8}{_{\text{ss}}}$};
		\draw (95.5,52.4) node [anchor=north west][inner sep=0.75pt]    {$p_{0}$};
		\draw (157,108.9) node [anchor=north west][inner sep=0.75pt]  [color={rgb, 255:red, 76; green, 0; blue, 153 }  ,opacity=1 ]  {$\textcolor[rgb]{0.3,0,0.6}{q}$};
	    \end{tikzpicture}
	    }
    } \hspace{.5cm}
	\subfloat[Acute angle remains \\ ($p_0^{\top} q > 0$) \label{fig:polarize_ss_acute}]{
	\resizebox{!}{2.5cm}{
	    \begin{tikzpicture}
	    [x=0.75pt,y=0.75pt,yscale=-1,xscale=1]
		    
		\draw    (90,120.5) -- (89.69,60.05) ;
		\draw [shift={(89.68,58.05)}, rotate = 89.71] [color={rgb, 255:red, 0; green, 0; blue, 0 }  ][line width=0.75]    (10.93,-3.29) .. controls (6.95,-1.4) and (3.31,-0.3) .. (0,0) .. controls (3.31,0.3) and (6.95,1.4) .. (10.93,3.29)   ;
		\draw [color={rgb, 255:red, 76; green, 0; blue, 153 }  ,draw opacity=1 ]   (90,120.5) -- (145.86,94.88) ;
		\draw [shift={(147.68,94.05)}, rotate = 155.36] [color={rgb, 255:red, 76; green, 0; blue, 153 }  ,draw opacity=1 ][line width=0.75]    (10.93,-3.29) .. controls (6.95,-1.4) and (3.31,-0.3) .. (0,0) .. controls (3.31,0.3) and (6.95,1.4) .. (10.93,3.29)   ;
		\draw [color={rgb, 255:red, 0; green, 102; blue, 204 }  ,draw opacity=1 ]   (90,120.5) -- (122.62,68.24) ;
		\draw [shift={(123.68,66.55)}, rotate = 121.97] [color={rgb, 255:red, 0; green, 102; blue, 204 }  ,draw opacity=1 ][line width=0.75]    (10.93,-3.29) .. controls (6.95,-1.4) and (3.31,-0.3) .. (0,0) .. controls (3.31,0.3) and (6.95,1.4) .. (10.93,3.29)   ;

		\draw (124.67,52.4) node [anchor=north west][inner sep=0.75pt]    {$\textcolor[rgb]{0,0.4,0.8}{p}\textcolor[rgb]{0,0.4,0.8}{_{\text{ss}}}$};
		\draw (66.5,52.4) node [anchor=north west][inner sep=0.75pt]    {$p_{0}$};
		\draw (150,84.9) node [anchor=north west][inner sep=0.75pt]  [color={rgb, 255:red, 76; green, 0; blue, 153 }  ,opacity=1 ]  {$\textcolor[rgb]{0.3,0,0.6}{q}$};
        \end{tikzpicture}
        }
	} \hspace{.5cm}
	\subfloat[Obtuse angle remains \\ ($p_0^{\top} q < 0$) \label{fig:polarize_ss_obtuse}]{
    	\resizebox{!}{2.7cm}{
	    \begin{tikzpicture}
	    [x=0.75pt,y=0.75pt,yscale=-1,xscale=1]
		    
		\draw    (90,120.5) -- (89.69,60.05) ;
		\draw [shift={(89.68,58.05)}, rotate = 89.71] [color={rgb, 255:red, 0; green, 0; blue, 0 }  ][line width=0.75]    (10.93,-3.29) .. controls (6.95,-1.4) and (3.31,-0.3) .. (0,0) .. controls (3.31,0.3) and (6.95,1.4) .. (10.93,3.29)   ;
		\draw [color={rgb, 255:red, 76; green, 0; blue, 153 }  ,draw opacity=1 ]   (90,120.5) -- (146.26,136.98) ;
		\draw [shift={(148.18,137.55)}, rotate = 196.33] [color={rgb, 255:red, 76; green, 0; blue, 153 }  ,draw opacity=1 ][line width=0.75]    (10.93,-3.29) .. controls (6.95,-1.4) and (3.31,-0.3) .. (0,0) .. controls (3.31,0.3) and (6.95,1.4) .. (10.93,3.29)   ;
		\draw [color={rgb, 255:red, 0; green, 102; blue, 204 }  ,draw opacity=1 ]   (90,120.5) -- (47.49,71.56) ;
		\draw [shift={(46.18,70.05)}, rotate = 49.03] [color={rgb, 255:red, 0; green, 102; blue, 204 }  ,draw opacity=1 ][line width=0.75]    (10.93,-3.29) .. controls (6.95,-1.4) and (3.31,-0.3) .. (0,0) .. controls (3.31,0.3) and (6.95,1.4) .. (10.93,3.29)   ;

		\draw (26.5,65.4) node [anchor=north west][inner sep=0.75pt]    {$\textcolor[rgb]{0,0.4,0.8}{p}\textcolor[rgb]{0,0.4,0.8}{_{\text{ss}}}$};
		\draw (70.5,51.9) node [anchor=north west][inner sep=0.75pt]    {$p_{0}$};
		\draw (151.5,125.9) node [anchor=north west][inner sep=0.75pt]  [color={rgb, 255:red, 76; green, 0; blue, 153 }  ,opacity=1 ]  {$\textcolor[rgb]{0.3,0,0.6}{q}$};
        \end{tikzpicture}
        }
	}
	\caption{The polarized dynamics \eqref{eq:polarized_dynamics} admit a unique steady state depending on the sign of $p_0^{\top} q$.}
	\label{fig:polarize_ss}
\end{figure}
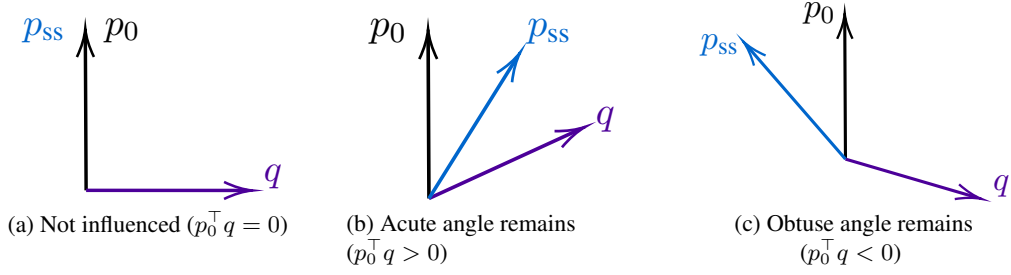


To align with the majority and gain the most votes, the party aims to find an ideology $q$ that optimizes the steady-state population-wide affinities:
\begin{equation}\label{eq:max_aff_pol}
	\begin{split}
		\max_{q \in \mathbb{R}^m} \quad & \mathbb{E}_{p \sim \muss(q)} [p^\top q] \\
		\textup{s.t.} \quad & \muss(q) = h(q,\cdot)_{\#} \mu_0, \\
			& \|q\| \leq 1,
	\end{split}
\end{equation}
where $\muss(q)$ is the steady-state distribution induced by $q$, and the randomness is due to the persistent influence of the stochastic initial position acting as an exogenous input $d$ in \eqref{eq:pop_dynamics}. In \eqref{eq:max_aff_pol}, the norm constraint on $q$ represents restrictions (e.g., of resources or social norms) and ensures that the optimal solution to \eqref{eq:max_aff_pol} is bounded.

As established in \citet[Proposition~1]{dean2022preference}, when $\lambda = 1$ (i.e., the initial position disappears from \eqref{eq:polarized_dynamics}), then for a constant ideology $q$, $\pss = q$ if $p_0^{\top} q > 0$, and $\pss = -q$ if $p_0^{\top} q < 0$. In this case, the goal of individual affinity maximization is trivial: the key is to identify the hemisphere of $p_0$ and then stick to some $q$ that ensures $p_0^{\top} q > 0$ \citep{dean2022preference}. Then, the maximum achievable affinity equals $1$. However, for the general case that involves $\lambda \in [0,1)$ and unknown $p_0$ and extends from the individual to the population, affinity maximization is not straightforward anymore, and a systematic means of searching for $q$ is required.

Problem~\eqref{eq:max_aff_pol} entails the underlying distribution dynamics \eqref{eq:polarized_dynamics}. In practice, the difficulty of accessing an accurate dynamics model and the need for real-time decision-making can render the steady-state distribution $\muss(q)$ elusive, thereby inhibiting offline numerical pipelines that rely on the formula or samples of $\muss(q)$. One online strategy to solve problem~\eqref{eq:max_aff_pol} is applying a stochastic algorithm in the style of performative prediction \citep{hardt2023performative}, see the discussions in \cref{subsec:literature}. This algorithm exploits repeated sampling and retraining but is unaware of the composite structure wherein the steady-state distribution of positions depends on the ideology. The update rule of such a vanilla online algorithm is
\begin{equation}\label{eq:stochastic_vanilla}  
	q_{k+1} = \proj_{\|q\| \leq 1}\left(q_k + \frac{\eta}{\nmb} \sum_{i=1}^{\nmb} p_k^i \right),
\end{equation}
where $\proj_{\|q\| \leq 1}(\cdot)$ denotes the projection onto the norm ball $\{q \in \mathbb{R}^m \,|\, \|q\| \leq 1\}$. This form of projected gradient ascent is due to maximization in \eqref{eq:max_aff_pol}. For comparison, we leverage the proposed online stochastic algorithm \eqref{eq:stochastic_alg} interacting with the distribution dynamics \eqref{eq:polarized_dynamics} with the following adjustments. First, we add a projection step within iterations to satisfy the norm constraint $\|q\| \leq 1$. Second, we use real-time samples to construct in an online manner the sensitivity $\widehat{H}(q_k,p_k)$ of the dynamics \eqref{eq:polarized_dynamics}, namely,
\begin{equation}\label{eq:sens_polarized_approx}
	\widehat{H}(q_k,p_k) = - \sigma \left(p_k^{\top}q_k I + p_k q_k^\top\right)(I- p_k p_k^{\top})\left[(\lambda I + \sigma q_k q_k^\top)(I - p_k p_k^{\top}) - \|\tilde{p}_k\| I\right]^{-1},
\end{equation}
where $\tilde{p}_k = \lambda p_k + (1-\lambda) p_0 + \sigma \cdot (p_k^{\top} q_k) q_k$. The difference of \eqref{eq:sens_polarized_approx} compared to \eqref{eq:sens_polarized_formula} lies in the replacement of $\pss$ with the current position $p_k$. Overall, our online stochastic algorithm \eqref{eq:stochastic_alg} tailored to \eqref{eq:max_aff_pol} reads
\begin{equation}\label{eq:stochastic_composite}
	q_{k+1} = \proj_{\|q\| \leq 1}\left(q_k + \frac{\eta}{\nmb} \sum_{i=1}^{\nmb} \left(p_k^i + \widehat{H}(q_k,p_k^i) q_k\right) \right).
\end{equation}
Here the ascent-based update corresponds to maximization in \eqref{eq:max_aff_pol}.

\begin{figure}[!tb]
    \centering
    \begin{minipage}[c]{.45\columnwidth}
		\centering
		\subfloat[Relative optimality gap]{\includegraphics[width=\columnwidth]{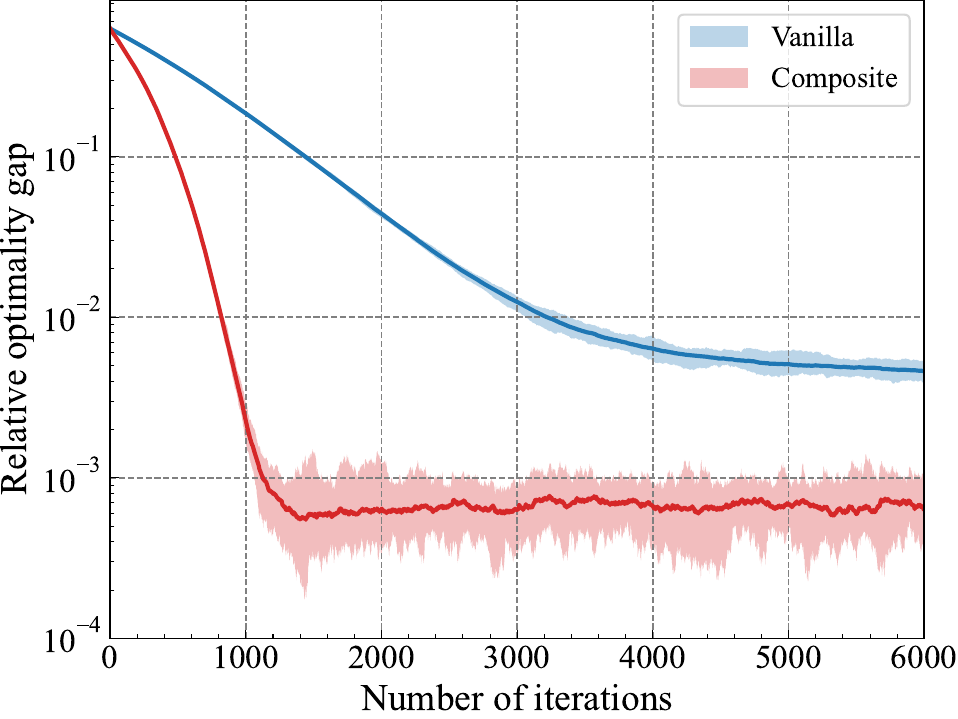}}
	\end{minipage}%
	\hfill
	\begin{minipage}[c]{.45\columnwidth}
		\centering
		\subfloat[Relative distance to the optimal solution $q^*$]{\includegraphics[width=\columnwidth]{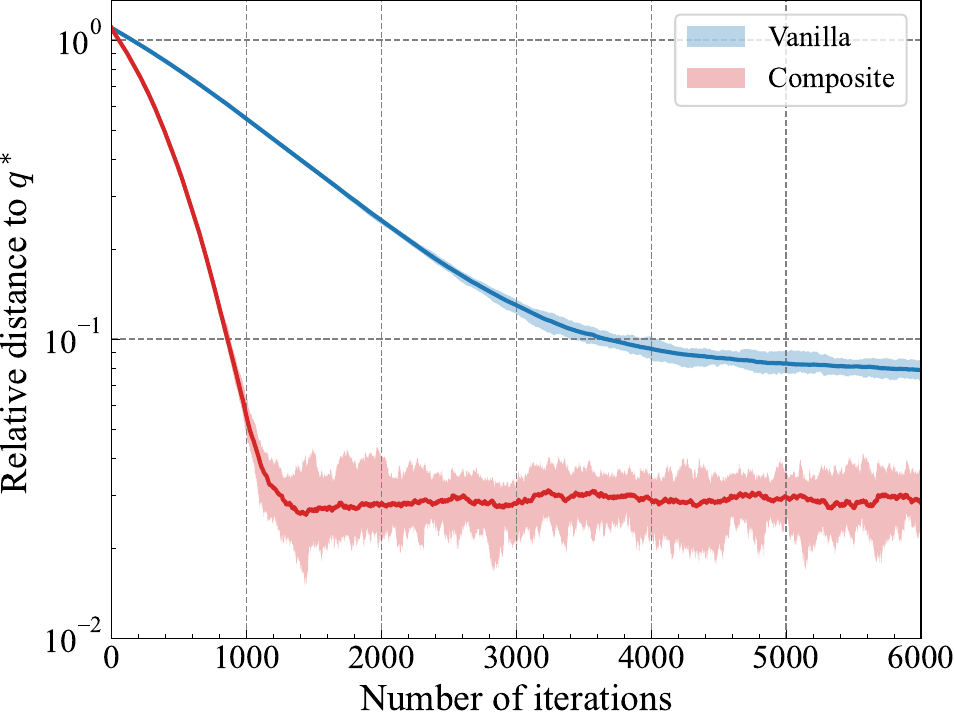}}
	\end{minipage}
    \caption{This figure illustrates the convergence behaviors of our online stochastic algorithm \eqref{eq:stochastic_composite} (termed ``composite'') and the vanilla algorithm \eqref{eq:stochastic_vanilla} (termed ``vanilla'') oblivious of the composite structure due to decision dependence. The solid lines represent the average values of convergence measures, whereas the shaded areas indicate the ranges of change in various independent trials.}
    \label{fig:affinity_evolution}
\end{figure}

In experiments, we set $p_k$ and $q$ to be $20$-dimensional vectors, i.e., $m=20$. We generate a population of $1000$ individuals and sample their initial positions from a unit hemisphere in the $20$-dimensional space, i.e., the set of unit vectors that form an angle not more than $90^{\circ}$ with a randomly generated reference vector. Regarding the parameters in the distribution dynamics \eqref{eq:polarized_dynamics} and the stochastic algorithms \eqref{eq:stochastic_vanilla} and \eqref{eq:stochastic_composite}, we set the coefficients $\lambda = 0.4$, $\sigma = 0.5$, the step size $\eta = 5 \times 10^{-3}$, and the mini-batch size $\nmb = 50$. As a benchmark, we use the optimizer IPOPT \citep{wachter2006implementation} to calculate a (locally) optimal solution $q^*$ and the corresponding optimal value related to problem~\eqref{eq:max_aff_pol}. This optimizer starts from a random initial guess and accesses all the parameters and data across the population. In practice, however, such full access can be prohibitive, and our online algorithm \eqref{eq:stochastic_composite} is more desirable. We run $20$ independent trials of the online stochastic algorithms \eqref{eq:stochastic_vanilla} and \eqref{eq:stochastic_composite}.



\cref{fig:affinity_evolution} illustrates the evolution of average affinities across the population when different algorithms interact with the dynamics \eqref{eq:polarized_dynamics} and solve problem~\eqref{eq:max_aff_pol}. While using the same step size, our algorithm \eqref{eq:stochastic_composite} exhibits a faster convergence rate, achieves a lower optimality gap, and attains a smaller distance to the optimal solution compared to the vanilla algorithm \eqref{eq:stochastic_vanilla}. The key reason stems from the algorithmic structure. The vanilla algorithm \eqref{eq:stochastic_vanilla} is oblivious to the composite characteristic of problem~\eqref{eq:max_aff_pol} and solely focuses on adaptation. In contrast, our algorithm~\eqref{eq:stochastic_composite} takes into account the composite problem structure and actively regulates the shift of distribution dynamics~\eqref{eq:polarized_dynamics}, thereby attaining more favorable optimality guarantees. We remark that the non-vanishing errors in \cref{fig:affinity_evolution} are due to the variance of the stochastic sample collected during iterations. Further experiments confirm that a larger mini-batch size $\nmb$ and a smaller (or diminishing) step size $\eta$ contribute to a lower asymptotic value of the convergence measure.

\begin{figure}[!tb]
    \centering
    \begin{minipage}[c]{.33\columnwidth}
		\centering
		\subfloat[Histogram corresponding to the vanilla algorithm \eqref{eq:stochastic_vanilla}]{\includegraphics[width=\columnwidth]{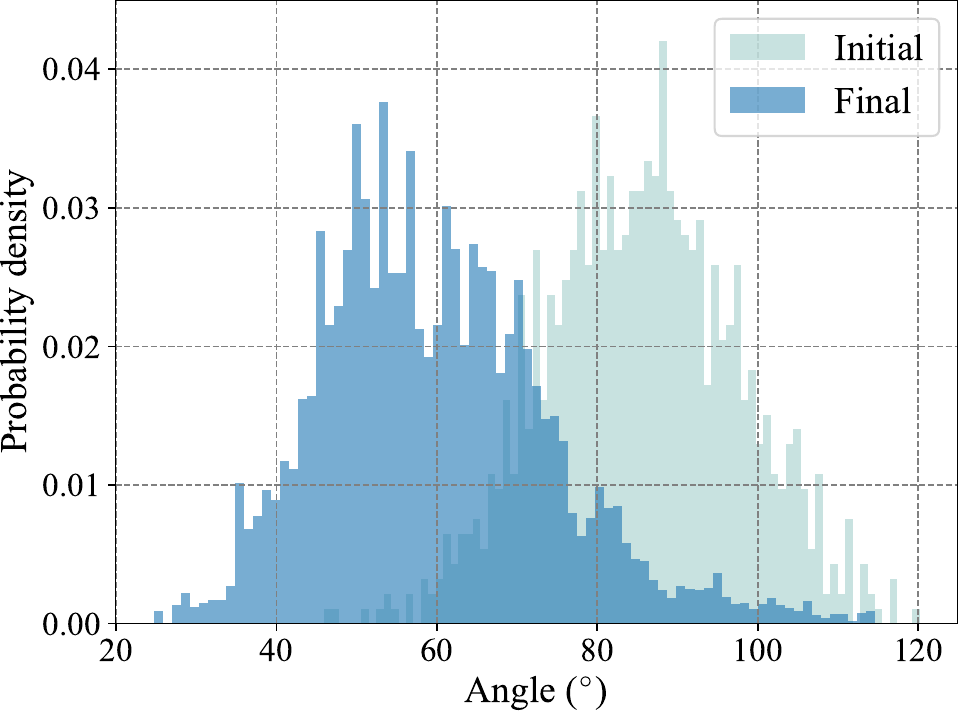} \label{fig:angle_hist_vanilla}} 
	\end{minipage}%
	\hfill
	\begin{minipage}[c]{.33\columnwidth}
		\centering
		\subfloat[Histogram corresponding to our algorithm \eqref{eq:stochastic_composite}]{\includegraphics[width=\columnwidth]{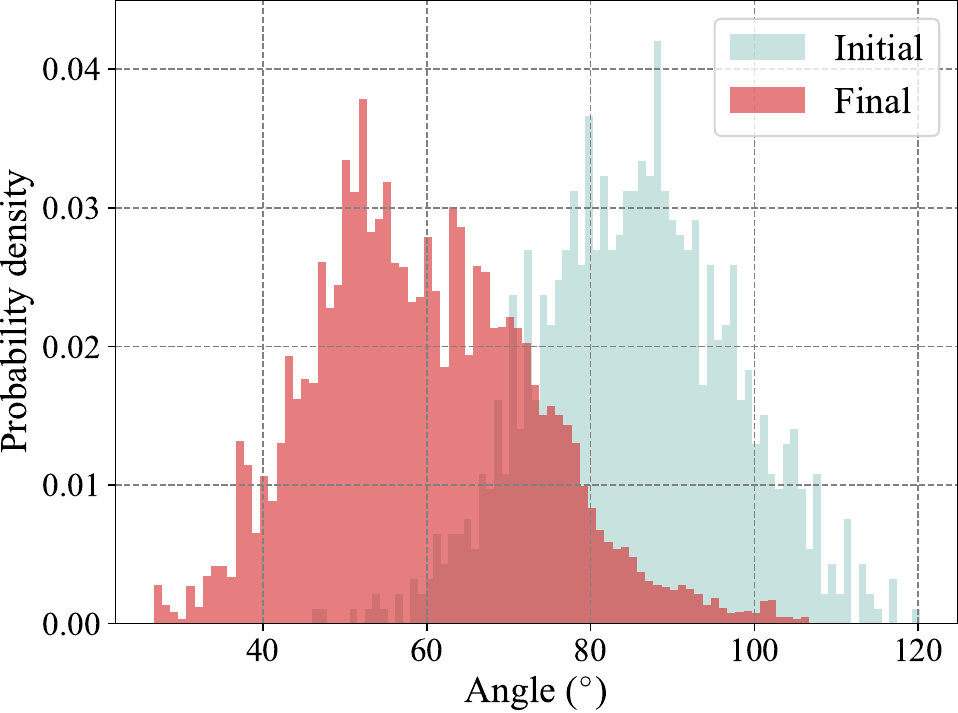}}
	\end{minipage}
    \begin{minipage}[c]{.33\columnwidth}
		\centering
		\subfloat[Histogram induced by the optimal solution $q^*$ obtained via IPOPT]{\includegraphics[width=\columnwidth]{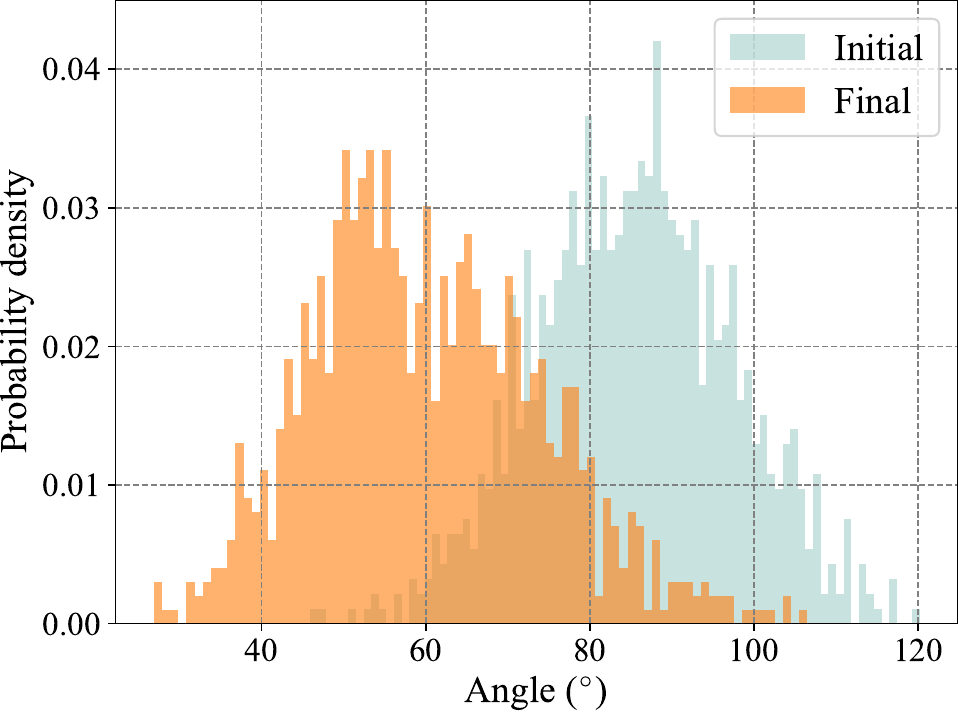}}
	\end{minipage}
    \caption{This figure illustrates the histograms of the angles between the position and the decision across the population. Initially, the angles are largely concentrated in $[80^{\circ}, 100^{\circ}]$, implying the average population-wide affinity is close to zero. In the end (i.e., when the population reaches the steady state), these angles mostly fall into $[40^\circ, 80^\circ]$, which indicates that the average affinity becomes positive.}
    \label{fig:angle_histogram}
\end{figure}

The above convergence behaviors are also linked with the histograms of the angles between the position and the decision across the population, see \cref{fig:angle_histogram}. Since the objective function of \eqref{eq:max_aff_pol} is an inner product of the position $p$ and the decision $q$, the angle between $p$ and $q$ is a useful indicator of the sign and the value of the inner product $p^\top q$. As shown in \cref{fig:angle_histogram}, the initial angles are largely distributed in $[80^\circ, 100^\circ]$, rendering the average population-wide affinity close to zero. The optimal solution returned by IPOPT and the solutions obtained by the vanilla algorithm \eqref{eq:stochastic_vanilla} and our algorithm \eqref{eq:stochastic_composite} all result in a final configuration wherein most angles fall into $[40^\circ, 80^\circ]$. Hence, the final average affinity is positive. Since the vanilla algorithm \eqref{eq:stochastic_vanilla} focuses less on steering the distribution, in \cref{fig:angle_hist_vanilla}, there are relatively more final angles in $[90^{\circ}, 115^{\circ}]$, which accounts for a smaller affinity and a larger optimality gap as shown in \cref{fig:affinity_evolution}. In contrast, the solution found by our algorithm \eqref{eq:stochastic_composite} induces a nearly identical population-wide behavior as the ground truth.

\subsection{Performance optimization given discrete choice distributions}
\cref{subsec:polarized_pop} is concerned with affinity maximization involving continuous population distributions. In contrast, we now consider decision-making given discrete distributions lying in the probability simplex $\Delta^{m} = \{p \in \mathbb{R}^m | 1^\top p = 1, 0 \leq p^i \leq 1, i=1,\ldots,m\}$, where $m \in \mathbb{N}_{+}$. This setting is motivated by a recommender system scenario, where a user selects a specific item (e.g., product, movie, or plan) from a set of candidates, and the choice model is characterized by a discrete distribution in $\Delta^m$. The recommender aims to optimize certain performance measures (e.g., of gain and diversity) by interacting with this user and adjusting its decision. This setting is also related to the multi-armed bandit problem, in that the user action described by a choice distribution $p \in \Delta^m$ incurs performance feedback determined by the decision-maker.

We consider the following distribution dynamics
\begin{equation}\label{eq:softmax_dynamics}
	p_{k+1} = \lambda_1 p_k + \lambda_2 \softmax(-\epsilon q) + (1-\lambda_1-\lambda_2) p_0, \qquad k \in \mathbb{N},
\end{equation}
where $p_k \in \Delta^m$ denotes the choice distribution at time $k$, $q \in \mathbb{R}^m$ is a decision vector whose element represents, e.g., the price or loss of each item in a set, $\lambda_1, \lambda_2 \in [0,1)$ are combination coefficients, and the $i$-th element of $\softmax(-\epsilon q)$ is given by $\exp(-\epsilon q^i)/\sum_{i=1}^{m} \exp(-\epsilon q^i)$, where $\epsilon > 0$ is a coefficient, and $i=1,\ldots,m$. The term $\softmax(-\epsilon q)$ gives a distribution consisting of probabilities proportional to the exponentials of $-\epsilon q$. One interpretation of this softmax operation is that an item $i$ entailing a low price $q^i$ is selected with a high probability. Similar to \eqref{eq:polarized_dynamics}, the last term of \eqref{eq:softmax_dynamics} captures the persistent influence of the initial choice distribution. The steady-state distribution of \eqref{eq:softmax_dynamics} given a fixed decision $q$ is
\begin{equation}\label{eq:steady_state_softmax}
	\pss = \frac{\lambda_2}{1-\lambda_1} \softmax(-\epsilon q) + \frac{1-\lambda_1-\lambda_2}{1-\lambda_1} p_0 \triangleq h(q, p_0).
\end{equation}

The recommender pursues maximizing its expected gain while preserving diversity, subject to certain budget requirements. This is formalized by the following problem
\begin{equation}\label{eq:max_adv_entropy}
\begin{split}
	\max_{q \in \mathbb{R}^m} \quad& p^\top q + \rho \sum_{i=1}^{m} p_i \log p_i \triangleq \Phi(q,p) \\
	\text{s.t.} \quad& p = h(q, p_0), \\
		&1^\top q = b, \\
		&0 \leq q_i \leq \overline{q}.
\end{split}
\end{equation}
In problem \eqref{eq:max_adv_entropy}, the first term $p^\top q$ of the objective function represents the expected value given the steady-state discrete distribution $p=h(q,p_0)$, see also \eqref{eq:steady_state_softmax}. This term indicates the expected price paid by the user, or conversely, the expected gain of the recommender. The second term of the objective is an entropy regularization with the coefficient $\rho > 0$, and its purpose is to promote diversity by favoring the distribution $p$ with a high entropy. The remaining constraints of problem \eqref{eq:max_adv_entropy} specify budget requirements of the decision $q$, where $b, \overline{q} > 0$ are constants.

Problem~\eqref{eq:max_adv_entropy} involves the underlying distribution dynamics \eqref{eq:softmax_dynamics}. An offline numerical solver hinges on an explicit and accurate model of \eqref{eq:softmax_dynamics} and requires sufficient waiting time for $p_k$ to converge to its steady state $\pss$, which can be restrictive in practice. To solve problem~\eqref{eq:max_adv_entropy} in an online fashion, the vanilla algorithm, which is of the style of performative prediction \citep{hardt2023performative} and is oblivious to the composite problem structure, reads as follows
\begin{equation}\label{eq:stochastic_vanilla_simplex}
	q_{k+1} = \proj_{\mathcal{C}}(q_k + \eta p_k),
\end{equation}
where $\proj_{\mathcal{C}}(\cdot)$ denotes the projection to the simplex $\mathcal{C} = \{q \in \mathbb{R}^m | 1^\top q = b, 0 \leq q_i \leq \overline{q}\}$, and $\eta > 0$ is the step size. Another choice is the following derivative-free algorithm \citep{zhang2022new}
\begin{equation}\label{eq:stochastic_dfo_simplex}
	q_{k+1} = \proj_{\mathcal{C}}\left(q_k + \eta \frac{m}{\delta}(\Phi(q_k + \delta v_k, p_k) - \Phi(q_{k-1}+\delta v_{k-1}, p_{k-1}))v_k\right),
\end{equation}
where $\eta > 0$ is the step size, $m$ is the dimension of $q_k$, $\delta > 0$ is the smoothing parameter, $v_k,v_{k-1}$ are independent random vectors uniformly sampled from the unit sphere in $\mathbb{R}^m$, and $\Phi(q_k + \delta v_k, p_k)$ is the objective value evaluated at time $k$. Algorithm~\eqref{eq:stochastic_dfo_simplex} exploits bandit feedback of the objective, and a similar version of \eqref{eq:stochastic_dfo_simplex} is adopted in \citet{miller2021outside,ray2022decision}. In contrast, our algorithm taking into account the composite structure due to decision dependence is
\begin{equation}\label{eq:stochastic_composite_simplex}
	q_{k+1} = \proj_{\mathcal{C}}\Big(q_k + \eta \big(p_k + H_k(q_k + \rho(1 + \log p_k) \big) \Big).
\end{equation}
In \eqref{eq:stochastic_composite_simplex}, $\eta>0$ is the step size, and the distribution sensitivity $H^k$ (independent of $p_k$ and $p_0$) is
\begin{equation*}
	H_k = \nabla_q h(q_k,p_0) = -\frac{\epsilon \lambda_2}{1-\lambda_1} \cdot \frac{(1^\top z_k)\operatorname{diag}(z_k) - z_k z_k^\top}{(1^\top z_k)^2},
\end{equation*}
where the vector $z_k \in \mathbb{R}^m$ is given by $\exp(-\epsilon q_k)$, and $\operatorname{diag}(z_k)$ denotes a diagonal matrix with $z_k$ as its diagonal elements. The ascent-based updates in \eqref{eq:stochastic_vanilla_simplex}, \eqref{eq:stochastic_dfo_simplex}, and \eqref{eq:stochastic_composite_simplex} are due to maximization in \eqref{eq:max_adv_entropy}. Since the recommender interacts directly with a single user and does not require multiple samples, the update \eqref{eq:stochastic_composite_simplex} differs slightly from \eqref{eq:stochastic_alg}, although both are grounded in the same principle.

In experiments, the vectors $p$ for the choice distribution and the decision $q$ are $100$-dimensional vectors, i.e., $m=100$. In terms of the distribution dynamics \eqref{eq:softmax_dynamics}, we set $\lambda_1 = 0.2, \lambda_2 = 0.5$, and $\epsilon = 0.5$. In problem~\eqref{eq:max_adv_entropy}, we set the regularization coefficient $\rho = 0.1$, the budget $b = 250$, and the upper bound $\overline{q} = 5$. For both algorithms \eqref{eq:stochastic_vanilla_simplex} and \eqref{eq:stochastic_composite_simplex}, we select the step size $\eta = 0.5$. For the algorithm \eqref{eq:stochastic_dfo_simplex}, we set $\eta=0.1, \delta=2$ and run $20$ independent trials. We use IPOPT \citep{wachter2006implementation} to calculate the optimal value and the optimal point of problem~\eqref{eq:max_adv_entropy} in an offline manner as benchmarks. However, as discussed earlier, numerical solvers can be restrictive in practice when the dynamics model \eqref{eq:softmax_dynamics} is inaccurate and real-time decision-making is necessary.

\begin{figure}[!tb]
    \centering
    \begin{minipage}[c]{.49\columnwidth}
		\centering
		\subfloat[Relative optimality gap]{\includegraphics[width=\columnwidth]{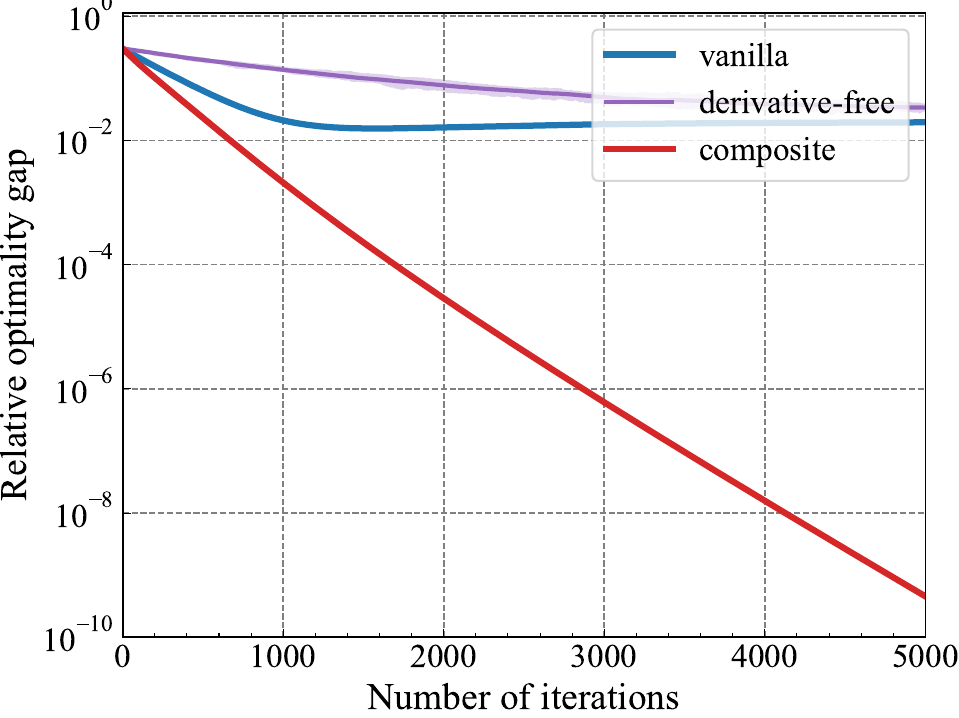}}
	\end{minipage}%
	\hfill
	\begin{minipage}[c]{.49\columnwidth}
		\centering
		\subfloat[Relative distance to the optimal solution $q^*$]{\includegraphics[width=\columnwidth]{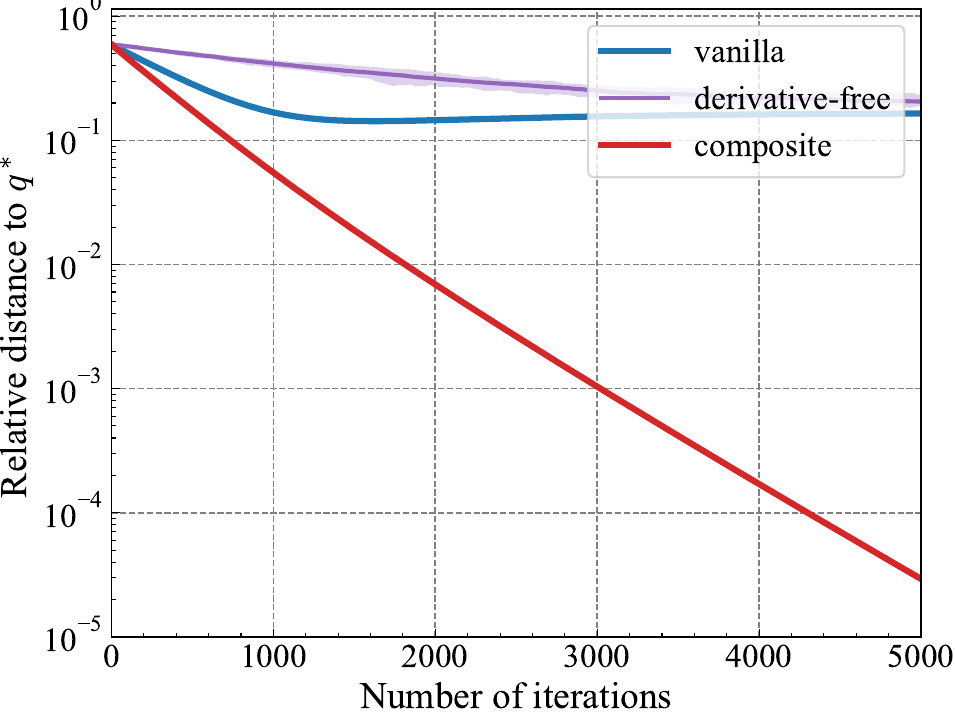}}
	\end{minipage}
    \caption{This figure illustrates the convergence behaviors of the algorithm \eqref{eq:stochastic_vanilla_simplex} (labeled ``vanilla'') unaware of the composite problem structure, the derivative-free algorithm \eqref{eq:stochastic_dfo_simplex} (labeled ``derivative-free''), and our online stochastic algorithm \eqref{eq:stochastic_composite_simplex} (labeled ``composite'').}
    \label{fig:loss_evolution}
\end{figure}

\cref{fig:loss_evolution} shows the convergence results of online algorithms \eqref{eq:stochastic_vanilla_simplex}, \eqref{eq:stochastic_dfo_simplex}, and \eqref{eq:stochastic_composite_simplex} while interacting with the dynamics \eqref{eq:softmax_dynamics} to solve problem~\eqref{eq:max_adv_entropy}. The vanilla algorithm \eqref{eq:stochastic_vanilla_simplex} is online in nature, and therefore it adapts to the changing distribution \eqref{eq:softmax_dynamics}, eventually reaching a fixed-point solution. However, this algorithm suffers from significant sub-optimality, because it neglects the composite problem structure where the steady-state distribution is a function of the decision. The derivative-free algorithm~\eqref{eq:stochastic_dfo_simplex} converges slowly due to the stochasticity of gradient estimates. 
In contrast, our algorithm \eqref{eq:stochastic_composite_simplex} respects the composite structure by leveraging distribution sensitivities and regulating the shift of the choice distribution. Consequently, it attains a significantly higher solution accuracy. We remark that compared to \cref{fig:affinity_evolution} in \cref{subsec:polarized_pop}, the curves of algorithms \eqref{eq:stochastic_vanilla_simplex} and \eqref{eq:stochastic_composite_simplex} in \cref{fig:loss_evolution} do not exhibit stochastic oscillations, because we no longer resort to samples of a population distribution for mini-batch gradients.

The aforementioned difference in solution quality also causes the discrepancy in the evolution of the Wasserstein distances $W_1(p_k, \pss(q^*))$ between the dynamic choice distribution $p_k$ and the steady-state distribution $\pss(q^*)$ induced by the optimal decision $q^*$ via IPOPT\@. \cref{fig:wass_dist} illustrates such a difference in $W_1(p_k, \pss(q^*))$. Overall, these distances converge, thanks to the stable distribution dynamics \eqref{eq:softmax_dynamics} and the convergence of the algorithms to fixed decisions. We note that the vanilla algorithm \eqref{eq:stochastic_vanilla_simplex} and the derivative-free algorithm \eqref{eq:stochastic_dfo_simplex} bring about biased final choice distributions, whereas our algorithm \eqref{eq:stochastic_composite_simplex} drives the distribution to asymptotically converge to $\pss(q^*)$ corresponding to the optimal decision. As pointed out earlier, this major difference is due to our algorithmic structure, which explicitly addresses the composite problem characteristic resulting from decision dependence.

\begin{figure}[!tb]
    \centering
    \includegraphics[width=0.5\columnwidth]{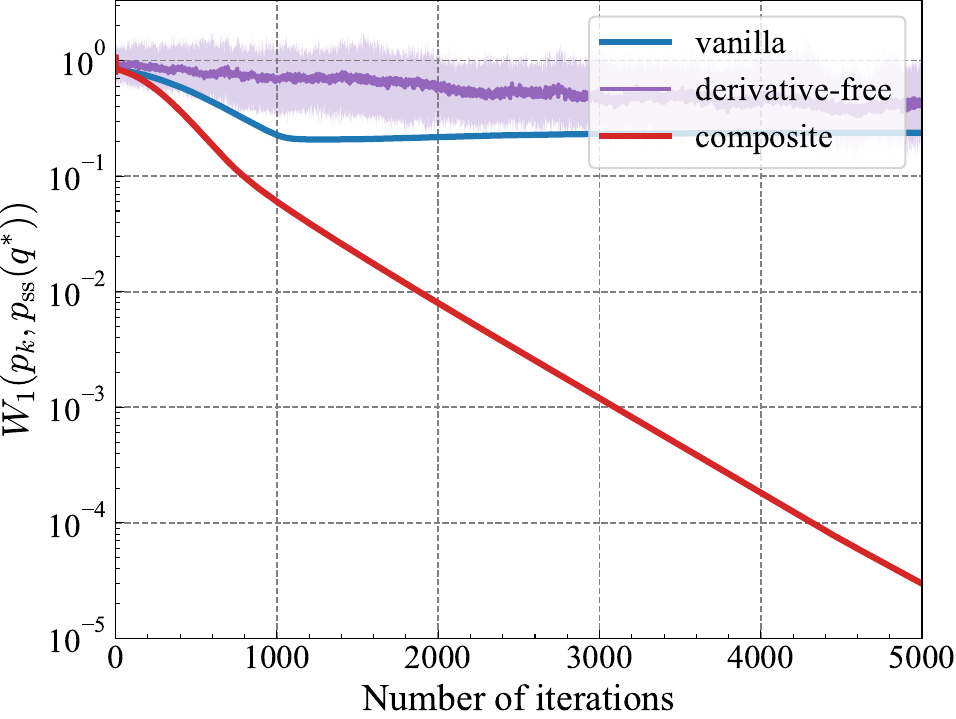}
    \caption{This figure demonstrates the evolution of the Wasserstein distances $W_1(p_k, \pss(q^*))$ between the choice distributions $p_k$ and $\pss(q^*)$. The legends ``vanilla'', ``derivative-free'', and ``composite'' correspond to algorithms \eqref{eq:stochastic_vanilla_simplex}, \eqref{eq:stochastic_dfo_simplex}, and \eqref{eq:stochastic_composite_simplex}, respectively.}
    \label{fig:wass_dist}
\end{figure}

\section{Conclusion}\label{sec:conclusion}
We formulated a decision-dependent stochastic optimization problem, where the dependence originates from the closed-loop interaction between a decision-maker and a dynamically evolving distribution. We presented an online stochastic algorithm that leverages samples from the dynamic distribution, takes into account the composite structure of the problem by anticipating the sensitivity of the distribution with respect to the decision, and shapes the overall distribution to inform optimal decision-making. We established the optimality guarantees of the proposed algorithm both in expectation and with high probability. Furthermore, we quantified the generalization performance in a finite-sample regime with an empirical distribution.

Future directions include but are not limited to i) addressing constraints related to the decision and the distribution (represented by, e.g., bounds on risk measures), ii) analyzing a game-theoretic scenario where multiple decision-makers interact with a dynamic distribution, and iii) developing model-free methods (through, e.g., derivative-free optimization) that bypass the need for sensitivity matrices of distribution dynamics. We hope this work sparks further advances at the intersection of machine learning, stochastic optimization, and nonlinear control.


    \acks{We thank the Max Planck ETH Center for Learning Systems, the Swiss National Science Foundation via NCCR Automation, and the German Research Foundation for their support. We acknowledge N.~Lanzetti for useful insights and comments, particularly on \cref{lem:wass_dist_bd_recur}, and thank G.~De Pasquale, P.~Grontas, L.~Aolaritei, S.~Li, X.~He, and S.~Hall for helpful discussions.}



    \appendix
\section{Useful Lemmas}\label{app:lemmas}
First, we present a lemma that establishes the upper and lower bounds of the weighted norm.
\begin{lemma}\label{lem:weighted_norm}
	Let $x \in \mathbb{R}^m$ and $P \in \mathbb{R}^{m\times m}$ be positive definite. Then,
	\begin{equation}\label{eq:weight_norm_bd}
		\sqrt{\lambda_{\min}(P)} \|x\| \leq \|x\|_P \leq \sqrt{\lambda_{\max}(P)} \|x\|.
	\end{equation}
\end{lemma}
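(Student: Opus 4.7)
The plan is to invoke the spectral decomposition of $P$, which is available because $P$ is (real) symmetric and positive definite. I would write $P = U \Lambda U^\top$, where $U \in \mathbb{R}^{m \times m}$ is orthogonal and $\Lambda = \operatorname{diag}(\lambda_1, \ldots, \lambda_m)$ collects the eigenvalues of $P$, all of which satisfy $\lambda_{\min}(P) \leq \lambda_i \leq \lambda_{\max}(P)$ and are strictly positive.

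Next, I would introduce the change of variable $y = U^\top x \in \mathbb{R}^m$ and expand the weighted norm as
\begin{equation*}
    \|x\|_P^2 = x^\top P x = x^\top U \Lambda U^\top x = y^\top \Lambda y = \sum_{i=1}^m \lambda_i y_i^2.
\end{equation*}
Bounding each $\lambda_i$ above and below yields
\begin{equation*}
    \lambda_{\min}(P) \|y\|^2 \leq \|x\|_P^2 \leq \lambda_{\max}(P) \|y\|^2.
\end{equation*}
Since $U$ is orthogonal, $\|y\|^2 = x^\top U U^\top x = \|x\|^2$, so these inequalities are exactly $\lambda_{\min}(P) \|x\|^2 \leq \|x\|_P^2 \leq \lambda_{\max}(P) \|x\|^2$. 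Taking square roots (all quantities are nonnegative) produces the two bounds claimed in \eqref{eq:weight_norm_bd}.

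This proof is essentially a one-step spectral calculation, so I do not anticipate any real obstacle; the only thing worth double-checking is that $P$ being positive definite guarantees both the orthogonal diagonalization and the positivity of all eigenvalues, which is standard. No additional assumption beyond positive definiteness is required.
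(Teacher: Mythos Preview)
Your proof is correct and essentially the same as the paper's. The paper states the bounds slightly more tersely by invoking the positive semidefiniteness of $\lambda_{\max}(P)I - P$ and $P - \lambda_{\min}(P)I$ directly, whereas you unpack this via the explicit spectral decomposition; both arguments are standard and equivalent.
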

\begin{proof}
	For a positive definite matrix $P$, both $\lambda_{\max}(P)I - P$ and $P - \lambda_{\min}(P)I$ are positive semidefinite. Hence, for any $x \in \mathbb{R}^m$, $\lambda_{\max}(P) x^\top x - x^\top P x \geq 0$ and $ x^\top P x - \lambda_{\min}(P) x^\top x \geq 0$. 
	By rearranging terms and taking the square root, we know that \eqref{eq:weight_norm_bd} holds.
\end{proof}

Some implications of \cref{lem:weighted_norm} are as follows. If a function $\Psi: \mathbb{R}^m \to \mathbb{R}$ is $L_\Psi$-Lipschitz with respect to $\|\cdot\|_P$, then it is $L_\Psi \sqrt{\lambda_{\max}(P)}$-Lipschitz with respect to $\|\cdot\|$. Conversely, if $\Psi$ is $L'_\Psi$-Lipschitz with respect to $\|\cdot\|$, then it is $L'_\Psi/\sqrt{\lambda_{\min}(P)}$-Lipschitz with respect to $\|\cdot\|_P$.

The following lemma provides upper bounds related to the square of a nonnegative sequence.
\begin{lemma}\label{lem:seq_square_sum}
    Suppose that a nonnegative sequence $(a_k)_{k\in \mathbb{N}}$ satisfies $a_{k+1} \leq c a_k + b_k, \forall k \in \mathbb{N}$, where $c \in (0,1), a_k, b_k \in \mathbb{R}$. Then,
    \begin{subequations}
    \begin{align}
    	a_{k+1}^2 &\leq \frac{1+c^2}{2} a_k^2 + \frac{1+c^2}{1-c^2} b_k^2, \label{eq:square_recursive_ineq} \\
        \sum_{k=0}^{T-1} a_k^2 &\leq \frac{2}{1-c^2} \left(a_0^2 + \frac{1+c^2}{1-c^2} \sum_{k=0}^{T-1} b_k^2\right). \label{eq:decreasing_seq_sum_bd}
    \end{align}
    \end{subequations}
\end{lemma}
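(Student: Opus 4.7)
The plan is to derive \eqref{eq:square_recursive_ineq} from the basic recurrence $a_{k+1}\le c a_k + b_k$ by squaring and using a weighted Young-type inequality, and then obtain \eqref{eq:decreasing_seq_sum_bd} by summing \eqref{eq:square_recursive_ineq} telescopically and solving for $\sum_{k=0}^{T-1} a_k^2$.

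For \eqref{eq:square_recursive_ineq}, I will square both sides of $a_{k+1}\le c a_k + b_k$ (legitimate because $a_{k+1}\ge 0$ and, if the right-hand side were ever negative, the inequality would still be non-trivial in the sense that one may replace it with the positive part; in the intended application both sides are nonnegative). This yields
\begin{equation*}
a_{k+1}^2 \le c^2 a_k^2 + 2 c\, a_k b_k + b_k^2.
\end{equation*}
I then apply the scalar inequality $2xy \le \varepsilon x^2 + \varepsilon^{-1} y^2$ with $x = c a_k$, $y = b_k$, and choose $\varepsilon = (1-c^2)/(2c^2) > 0$ so that $(1+\varepsilon) c^2 = (1+c^2)/2$. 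A direct computation shows $1 + 1/\varepsilon = (1+c^2)/(1-c^2)$, which reproduces exactly the constants in \eqref{eq:square_recursive_ineq}.

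For \eqref{eq:decreasing_seq_sum_bd}, I will sum \eqref{eq:square_recursive_ineq} over $k = 0,\dots,T-1$ and re-index the left-hand side as $\sum_{k=1}^{T} a_k^2 \ge \sum_{k=0}^{T-1} a_k^2 - a_0^2$ (using $a_T^2 \ge 0$). Setting $S_T := \sum_{k=0}^{T-1} a_k^2$, this yields
\begin{equation*}
S_T - a_0^2 \le \frac{1+c^2}{2}\, S_T + \frac{1+c^2}{1-c^2} \sum_{k=0}^{T-1} b_k^2,
\end{equation*}
so that $\bigl(1 - (1+c^2)/2\bigr) S_T = \tfrac{1-c^2}{2} S_T$ can be isolated on the left. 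Dividing through by $(1-c^2)/2$ gives \eqref{eq:decreasing_seq_sum_bd}.

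There is no real obstacle here; the only modest subtlety is identifying the correct Young's parameter $\varepsilon$, which is pinned down by requiring the coefficient of $a_k^2$ in \eqref{eq:square_recursive_ineq} to be the geometric mean-like quantity $(1+c^2)/2 \in (c^2,1)$, leaving enough slack below $1$ so that the telescoping step in the second inequality produces a positive factor $(1-c^2)/2$ on $S_T$. Any other choice of $\varepsilon$ would either fail to yield a strict contraction in \eqref{eq:square_recursive_ineq} (preventing the sum to be bounded) or give loose constants.
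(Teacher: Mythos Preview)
Your proof is correct and follows essentially the same approach as the paper: square the recurrence, apply Young's inequality $2xy\le \varepsilon x^2+\varepsilon^{-1}y^2$ with the parameter chosen so the $a_k^2$-coefficient becomes $(1+c^2)/2$, then sum and telescope. One minor remark: your parenthetical about the right-hand side possibly being negative is unnecessary, since $a_{k+1}\ge 0$ together with the assumed inequality forces $c a_k + b_k \ge 0$.
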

\begin{proof}
	We establish a recursive inequality of $(a_k^2)_{k \in \mathbb{N}}$ as follows
	\begin{align*}
		a_{k+1}^2 \leq c^2 a_k^2 + 2c a_k b_k + b_k^2 \leq c^2 a_k^2 + \frac{1-c^2}{2} a_k^2 + \frac{2c^2}{1-c^2}b_k^2 + b_k^2,
	\end{align*}
	where the last inequality holds because $2ab \leq t^2a^2 + \frac{b^2}{t^2}, \forall a,b \in \mathbb{R},t>0$. Hence, \eqref{eq:square_recursive_ineq} is proved. We sum up both sides of \eqref{eq:square_recursive_ineq} for $k=0,\ldots,T-1$ and obtain
	\begin{equation*}
		\sum_{k=1}^{T} a_k^2 \leq \frac{1+c^2}{2} \sum_{k=0}^{T-1} a_k^2 + \frac{1+c^2}{1-c^2} \sum_{k=0}^{T-1} b_k^2.
	\end{equation*} 
	It follows that
	\begin{align*}
		\frac{1-c^2}{2} \sum_{k=0}^{T-1} a_k^2 \leq a_0^2 - a_T^2 + \frac{1+c^2}{1-c^2} \sum_{k=0}^{T-1} b_k^2 \leq a_0^2 + \frac{1+c^2}{1-c^2} \sum_{k=0}^{T-1} b_k^2.
	\end{align*}
	We multiply both sides of the above inequality by $2/(1-c^2)$ and arrive at \eqref{eq:decreasing_seq_sum_bd}.
\end{proof}

We present a useful inequality relating the arithmetic and quadratic means. It can be proved by the Cauchy-Schwarz inequality or Jensen's inequality.
\begin{lemma}\label{lem:am_qm_ineq}
	Consider $K \in \mathbb{N}{+}$ real numbers $X_1, \ldots, X_K \in \mathbb{R}$. Then, $\frac{1}{K} \sum_{i=1}^{K} X_i \leq \sqrt{\frac{\sum_{i=1}^K X_i^2}{K}}$.
\end{lemma}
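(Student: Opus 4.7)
The plan is to prove this via the Cauchy--Schwarz inequality, which the statement itself hints at as a possible route. Specifically, I would apply Cauchy--Schwarz to the vectors $(X_1, \ldots, X_K) \in \mathbb{R}^K$ and $(1, 1, \ldots, 1) \in \mathbb{R}^K$. This yields
\begin{equation*}
    \left|\sum_{i=1}^K X_i\right| = \left|\sum_{i=1}^K X_i \cdot 1\right| \leq \sqrt{\sum_{i=1}^K X_i^2} \cdot \sqrt{\sum_{i=1}^K 1^2} = \sqrt{K \sum_{i=1}^K X_i^2}.
\end{equation*}
Dividing both sides by $K$ then yields $\frac{1}{K}\left|\sum_{i=1}^K X_i\right| \leq \sqrt{\frac{\sum_{i=1}^K X_i^2}{K}}$. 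Since $\sum_{i=1}^K X_i \leq |\sum_{i=1}^K X_i|$ trivially (regardless of the sign of the sum), the desired inequality follows immediately.

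An alternative route is to invoke Jensen's inequality with the convex function $\phi(x) = x^2$. This gives $\left(\frac{1}{K}\sum_{i=1}^K X_i\right)^2 \leq \frac{1}{K}\sum_{i=1}^K X_i^2$. Taking square roots on both sides and again using that a real number is bounded above by its absolute value completes the argument. Either derivation is about two lines long.

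There is no substantive obstacle here; this is a classical AM--QM style inequality, and the only subtlety worth flagging is that the $X_i$ are stated to be real (not necessarily nonnegative), so one must be careful to insert the absolute value step before concluding. In the main body, this lemma is presumably invoked to convert bounds on $\sum_k \|\nabla\tilde\Phi(u_k)\|^2$ into bounds on $\sum_k \|\nabla\tilde\Phi(u_k)\|$, linking the optimality guarantees of \cref{thm:optimality_nonconvex,thm:optimality_nonconvex_high_prob} to the distribution-convergence results of \cref{thm:distribution_expt,thm:distribution_high_prob}, which explains the $T^{1/4}$ rate observed there.
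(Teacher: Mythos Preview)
Your proposal is correct and matches the paper's approach exactly: the paper does not spell out a proof but simply remarks that the inequality ``can be proved by the Cauchy--Schwarz inequality or Jensen's inequality,'' which is precisely the pair of routes you outline. Your observation about inserting the absolute-value step for real (possibly negative) $X_i$ is a nice touch, and your description of where the lemma is invoked (linking squared-gradient bounds to the $T^{1/4}$ rates in \cref{thm:distribution_expt,thm:distribution_high_prob}) is accurate.
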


The following lemma uses the Wasserstein distance between distributions to give an upper bound on the difference between the expectations of a Lipschitz function under these distributions. It is a corollary of the Kantorovich–Rubinstein theorem \citep[Particular case 5.16]{villani2009optimal}.

\begin{lemma}[{\citet[Lemma~D.4]{perdomo2020performative}}]\label{lem:function_closeness}
	Consider a metric space $(\mathbb{R}^m,c)$. Let $\mu,\nu \in \mathcal{P}_1(\mathbb{R}^m)$, and let $\Psi:\mathbb{R}^m \to \mathbb{R}^n$ be $L_\Psi$-Lipschitz, i.e., $\forall x,y \in \mathbb{R}^m, \|\Psi(x) - \Psi(y)\| \leq L_\Psi c(x,y)$. With $W_1(\mu,\nu)$ defined by \eqref{eq:W1_dist}, we have
	\begin{equation}\label{eq:func_closeness}
		\left\|\E_{p\sim \mu}[\Psi(p)] - \E_{p\sim \nu}[\Psi(p)] \right\| \leq L_\Psi W_1(\mu,\nu).
	\end{equation}
\end{lemma}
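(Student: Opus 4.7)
The plan is to work directly from the coupling-based definition of $W_1$ in \eqref{eq:W1_dist}, rather than trying to route through the scalar Kantorovich--Rubinstein dual formulation (which would only immediately apply to real-valued $\Psi$ and would require an extra componentwise argument to extend to $\mathbb{R}^n$-valued maps).

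The first step is to fix an arbitrary coupling $\gamma \in \Gamma(\mu,\nu)$, so that its marginals are $\mu$ and $\nu$. Using the marginal property, I would rewrite the two expectations as integrals against $\gamma$:
\begin{equation*}
	\E_{p\sim\mu}[\Psi(p)] - \E_{p\sim\nu}[\Psi(p)] = \int_{\mathbb{R}^m \times \mathbb{R}^m} \bigl(\Psi(x) - \Psi(y)\bigr) \,\mathrm{d}\gamma(x,y).
\end{equation*}
Next, I would take the Euclidean norm on $\mathbb{R}^n$ of both sides and apply the triangle inequality for Bochner-type integrals (i.e., $\|\int f \,\mathrm{d}\gamma\| \leq \int \|f\|\,\mathrm{d}\gamma$, which is a direct consequence of Jensen's inequality applied to the convex function $\|\cdot\|$), yielding
\begin{equation*}
	\bigl\|\E_{p\sim\mu}[\Psi(p)] - \E_{p\sim\nu}[\Psi(p)]\bigr\| \leq \int_{\mathbb{R}^m \times \mathbb{R}^m} \|\Psi(x) - \Psi(y)\| \,\mathrm{d}\gamma(x,y).
\end{equation*}
Then I would invoke the $L_\Psi$-Lipschitz hypothesis pointwise under the integral to replace $\|\Psi(x)-\Psi(y)\|$ by $L_\Psi\, c(x,y)$, and finally take the infimum over all couplings $\gamma \in \Gamma(\mu,\nu)$ to obtain $L_\Psi W_1(\mu,\nu)$ on the right-hand side.

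There is no substantial obstacle: the integrability of $\Psi(x)-\Psi(y)$ under $\gamma$ follows from $\mu,\nu \in \mathcal{P}_1(\mathbb{R}^m)$ together with the Lipschitz bound (so that $\|\Psi(\cdot)\|$ grows at most linearly), which ensures both expectations on the left-hand side are well defined and that the integral manipulations above are legitimate. The only mildly delicate point is making sure the triangle-inequality-under-the-integral step is applied to the vector-valued integrand; this is standard but is the reason one prefers the coupling approach over a direct appeal to the scalar dual form of Kantorovich--Rubinstein.
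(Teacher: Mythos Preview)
Your argument is correct. The paper does not actually prove this lemma; it merely cites \citet[Lemma~D.4]{perdomo2020performative} and remarks that the result is a corollary of the Kantorovich--Rubinstein theorem \citep[Particular case 5.16]{villani2009optimal}. Your coupling-based proof is a genuinely different route from the one the paper points to: the Kantorovich--Rubinstein approach would establish the scalar case and then extend to $\mathbb{R}^n$-valued $\Psi$ by testing against an arbitrary unit vector $v\in\mathbb{R}^n$ (since $x\mapsto v^\top\Psi(x)$ is again $L_\Psi$-Lipschitz) and taking a supremum, whereas you bypass duality entirely and work directly with the primal definition \eqref{eq:W1_dist}. Your approach is more elementary and handles the vector-valued output in one stroke without the componentwise detour; the dual route, on the other hand, makes the tightness of the bound (that $W_1$ is exactly the supremum over $1$-Lipschitz scalar test functions) more transparent.
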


Next, we present H\"older's inequality for probability measures. It enables us to bound the expectation of the product of random variables.
\begin{lemma}[H\"older's inequality]\label{lem:holder_ineq}
	Consider $K \in \mathbb{N}_{+}$ random variables $X_1,\ldots,X_K \in \mathbb{R}$. Let constants $p_1,\ldots,p_K > 0$ satisfy $\sum_{i=1}^{K} 1/p_i = 1$. Then, $\EXPT{\left|\prod_{i=1}^{N}X_i \right|} \leq \prod_{i=1}^{N} \left(\EXPT{|X_i|^{p_i}}\right)^{\frac{1}{p_i}}$.
\end{lemma}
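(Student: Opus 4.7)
The plan is to reduce the general case to the two-variable case and then use a normalization argument plus induction. The two-variable case (classical H\"older) is the workhorse, and everything else follows from it.

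First, I would establish the pointwise Young-type inequality: for non-negative reals $a_1,\ldots,a_K$ and exponents $p_1,\ldots,p_K>0$ with $\sum_{i=1}^K 1/p_i = 1$,
\begin{equation*}
    \prod_{i=1}^K a_i \leq \sum_{i=1}^K \frac{a_i^{p_i}}{p_i}.
\end{equation*}
This is a direct consequence of Jensen's inequality applied to the concave function $\log$: $\log\bigl(\sum_i (1/p_i) a_i^{p_i}\bigr) \geq \sum_i (1/p_i) \log(a_i^{p_i}) = \log\bigl(\prod_i a_i\bigr)$, with the convention $0\cdot\log 0=0$.

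Next, I would handle the two-variable case $K=2$ by normalization. Assume first that $0 < \mathbb{E}[|X_1|^{p_1}], \mathbb{E}[|X_2|^{p_2}] < \infty$, and define the normalized variables $Y_i \triangleq X_i / (\mathbb{E}[|X_i|^{p_i}])^{1/p_i}$, so that $\mathbb{E}[|Y_i|^{p_i}]=1$. Applying the pointwise Young inequality to $(|Y_1|,|Y_2|)$ and taking expectations yields $\mathbb{E}[|Y_1 Y_2|] \leq 1/p_1 + 1/p_2 = 1$, which, after rescaling, is precisely the desired inequality. The degenerate cases (some $\mathbb{E}[|X_i|^{p_i}]=0$, in which case $X_i=0$ almost surely and both sides vanish; or some expectation equals $+\infty$, in which case the inequality is trivial) are disposed of directly.

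Finally, I would extend to general $K$ by induction. Assuming the result holds for $K-1$ factors, define $r$ by $1/r = \sum_{i=1}^{K-1} 1/p_i = 1 - 1/p_K$. Applying the $K=2$ case with exponents $r$ and $p_K$ to $\prod_{i=1}^{K-1} X_i$ and $X_K$ gives
\begin{equation*}
    \mathbb{E}\!\left[\left|\prod_{i=1}^K X_i\right|\right] \leq \left(\mathbb{E}\!\left[\left|\prod_{i=1}^{K-1} X_i\right|^r\right]\right)^{1/r} \bigl(\mathbb{E}[|X_K|^{p_K}]\bigr)^{1/p_K}.
\end{equation*}
Applying the induction hypothesis to $|X_1|^r, \ldots, |X_{K-1}|^r$ with exponents $p_i/r$ (which satisfy $\sum_{i=1}^{K-1} r/p_i = 1$) bounds the first factor by $\prod_{i=1}^{K-1}(\mathbb{E}[|X_i|^{p_i}])^{1/p_i}$, completing the induction.

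The argument is essentially textbook, so there is no deep obstacle; the only care required is bookkeeping the normalization step and the degenerate cases. If a shorter path is preferred, one can also take expectations directly in the pointwise Young inequality applied to $|Y_i| = |X_i|/(\mathbb{E}[|X_i|^{p_i}])^{1/p_i}$ for all $K$ simultaneously, obtaining the general case in one stroke without induction.
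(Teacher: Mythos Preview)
Your proof is correct and follows the standard textbook route (Young's inequality via concavity of $\log$, normalization, then induction on the number of factors). The paper, however, does not actually prove this lemma: it is listed in Appendix~A among ``Useful Lemmas'' as a known result and stated without proof, so there is no paper proof to compare against. Your argument is a perfectly acceptable way to fill in that citation.
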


For a set of random variables endowed with tail bounds, we can quantify the probability that their sums do not exceed a certain threshold, as discussed in the following lemma.
\begin{lemma}\label{lem:high_prob_sum_bd}
	Suppose that $K \in \mathbb{N}_{+}$ random variables $X_i \in \mathbb{R}$ satisfy $\PR{X_i \leq a_i} \geq 1 - \tau_i$, where $i \in \{1,\ldots,K\}$, with constants $a_i \in \mathbb{R}$ and $\tau_i \in (0,1)$ such that $\sum_{i=1}^{K} \tau_i \in (0,1)$. Then, we obtain $\PR{\sum_{i=1}^{K} X_i \leq \sum_{i=1}^{K} a_i} \geq 1-\sum_{i=1}^{K} \tau_i$.
\end{lemma}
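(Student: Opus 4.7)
The plan is a straightforward application of the union bound together with a monotonicity/inclusion argument on events. The statement is exactly the kind of tail bound one invokes when combining several high-probability guarantees (as we indeed need in the main analysis when bounding the cumulative stochastic quantities arising from the gradient noise $\xi_k$).

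Concretely, I would proceed as follows. Define the events $A_i \triangleq \{X_i \leq a_i\}$ for $i=1,\ldots,K$. The hypothesis gives $\Pr(A_i^c) \leq \tau_i$. By the union bound,
\begin{equation*}
	\Pr\!\Big(\bigcup_{i=1}^{K} A_i^c\Big) \leq \sum_{i=1}^{K} \Pr(A_i^c) \leq \sum_{i=1}^{K} \tau_i,
\end{equation*}
which, using $\sum_i \tau_i \in (0,1)$ and complementation, yields $\Pr\!\big(\bigcap_{i=1}^{K} A_i\big) \geq 1 - \sum_{i=1}^{K} \tau_i$. Next, I would observe the deterministic inclusion
\begin{equation*}
	\bigcap_{i=1}^{K} \{X_i \leq a_i\} \;\subseteq\; \Big\{\textstyle\sum_{i=1}^{K} X_i \leq \sum_{i=1}^{K} a_i\Big\},
\end{equation*}
which follows because summing pointwise preserves the inequalities. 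Monotonicity of probability then gives the desired bound.

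There is essentially no obstacle here: the argument does not require any independence assumption on the $X_i$, and the condition $\sum_i \tau_i \in (0,1)$ only ensures the stated high-probability lower bound is nontrivial (i.e., strictly positive and strictly less than one). The only point worth flagging in a clean write-up is that the events $A_i$ need not be independent, so the union bound is the sharpest generic tool available; any attempt to improve the constant would require additional structure that is not assumed.
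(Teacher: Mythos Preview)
Your proposal is correct and matches the paper's proof essentially line for line: define the events $A_i = \{X_i \leq a_i\}$, apply the union bound (Boole's inequality) to the complements, and use the inclusion $\bigcap_i A_i \subseteq \{\sum_i X_i \leq \sum_i a_i\}$ together with monotonicity of probability. There is nothing to add.
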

\begin{proof}
	Let $A_i$ denote the event that $X_i \leq a_i$. Its complementary event $A_i^c$ is $X_i > a_i$. The condition of the lemma implies that $\forall i \in \{1,\ldots,K\}, \PR{A_i^c} \leq \tau_i$. Then,
	\begin{equation*}
		\PR{\bigcup_{i=1}^{K} A_i^c} \stackrel{\text{(a.1)}}{\leq} \sum_{i=1}^{K} \PR{A_i^c} \leq \sum_{i=1}^{K} \tau_i,
	\end{equation*}
	where (a.1) follows from Boole's inequality. Consequently,
	\begin{align*}
		\PR{\sum_{i=1}^{K} X_i \leq \sum_{i=1}^{K} a_i} \stackrel{\text{(a.1)}}{\geq} \PR{\bigcap_{i=1}^{K} A_i} = 1 - \PR{\bigcup_{i=1}^{K} A_i^c} \geq 1 - \sum_{i=1}^{K} \tau_i,
	\end{align*}
	where (a.1) holds because the event $X_i \leq a_i, \forall i \in \{1,\ldots,K\}$ implies that $\sum_{i=1}^{K} X_i \leq \sum_{i=1}^{K} a_i$, but not vice versa.
\end{proof}

\section{Proof of \texorpdfstring{\cref{lem:mini_batch_grad}}{Lemma~\ref*{lem:mini_batch_grad}}}\label{app:proof_lem_batch_grad}
\begin{proof}
	Thanks to independent $p_0^1, \ldots, p_0^{\nmb}$, independent $d^1, \ldots, d^{\nmb}$, and the dynamics~\eqref{eq:pop_dynamics}, the samples $p_k^1, \ldots, p_k^{\nmb}$ collected at time $k$ are independent when conditioned on $\calF_{k-1}$. Hence, the mini-batch stochastic gradient $\nablabatch{k} \obj(u_k)$ given in \eqref{eq:batch_stoch_grad} satisfies 
	\begin{align}\label{eq:unbiased_batch_grad_detail}
		\EXPT{\nablabatch{k} \obj(u_k) \big| \calF_{k-1}} &= \frac{1}{\nmb} \sum_{i=1}^{\nmb} \E_{(p_k^i,d^i) \sim \gamma_k} \left[\nabla_u\Phi(u_k,p_k^i) + \nabla_u h(u_k,d^i) \nabla_p \Phi(u_k,p_k^i) \big| \calF_{k-1}\right] \notag \\
			&= \widenabla \obj(u_k),
	\end{align}
	where the last equality is due to \eqref{eq:ss_distr_stoch_grad}. Furthermore, if \cref{assump:var_grad} is satisfied,
	\begin{align}\label{eq:var_batch_grad_bd}
		\var \left[\nablabatch{k} \obj(u_k) \big| \calF_{k-1} \right] &\stackrel{\textup{(a.1)}}{=} \frac{1}{\nmb^2} \sum_{i=1}^{\nmb} \var \left[\nabla_u \Phi(u_k,p_k^i) + \nabla_u h(u_k,d^i)\nabla_p \Phi(u_k,p_k^i) \big| \calF_{k-1} \right] \notag \\
			&\stackrel{\textup{(a.2)}}{=} \frac{M}{\nmb} + \frac{M_V}{\nmb} \|\widenabla \obj(u_k)\|^2,
	\end{align}
	where (a.1) holds because $p_k^1, \ldots, p_k^{\nmb}$ are independent when conditioned on $\calF_{k-1}$, and for any random variable $\xi \in \mathbb{R}^n$ and real number $a \in \mathbb{R}$, $\var[a\xi] = \E\!\left[\|a\xi\|^2\right] - \left\|\E[a\xi]\right\|^2 = a^2 \var[\xi]$; (a.2) follows from \cref{assump:var_grad}. Therefore,
	\begin{equation*}
		\begin{split}
			\E\left[\|\nablabatch{k} \obj(u_k)\|^2 \big| \calF_{k-1} \right] &= \var \left[\nablabatch{k} \obj(u_k) \big| \calF_{k-1}\right] + \big\|\E[\nablabatch{k} \obj(u_k) \big| \calF_{k-1}]\big\|^2 \\
			&\leq \frac{M}{\nmb} + \left(\frac{M_V}{\nmb} + 1\right) \|\widenabla \obj(u_k)\|^2, 
		\end{split}
	\end{equation*}
	where the last inequality uses \eqref{eq:unbiased_batch_grad_detail} and \eqref{eq:var_batch_grad_bd}. Consequently, \cref{lem:mini_batch_grad} is proved.
\end{proof}

\section{Proofs for \texorpdfstring{\cref{subsec:distr_shift}}{Section~\ref*{subsec:distr_shift}}}
\subsection{Proof of \texorpdfstring{\cref{lem:wass_dist_bd_recur}}{Lemma~\ref*{lem:wass_dist_bd_recur}}}\label{app:proof_lem_dist_evolve}
\begin{proof}
	The joint distributions $\gamma_k$ and $\gammass(u_k)$ lie in the metric space $(\mathbb{R}^{m+r},c)$ with the metric $c$ given by \eqref{eq:metric_mix_norm}. Hence,
	\begin{align}
		W_1(\gamma_k,\gammass(u_k)) &\stackrel{\text{(a.1)}}{=} \inf_{\beta \in \Gamma(\alpha(p_0,d), \mu_d(d'))} \int_{\mathbb{R}^m \times \mathbb{R}^r \times \mathbb{R}^r} c\left((f^{(k)}(p_0,d),d), (h(u_k,d'),d')\right) \ud \beta(p_0,d,d') \notag \\
			&\stackrel{\text{(a.2)}}{\leq} \int_{\mathbb{R}^m \times \mathbb{R}^r \times \mathbb{R}^r} c\left((f^{(k)}(p_0,d),d'), (h(u_k,d),d')\right) \ud \left((\pi_1,\pi_2,\pi_2)_{\#} \tilde{\beta}\right) (p_0,d,d') \notag \\
			&\stackrel{\text{(a.3)}}{=} \int_{\mathbb{R}^m \times \mathbb{R}^r \times \mathbb{R}^r} c\left((f^{(k)}(p_0,d),d), (h(u_k,d),d)\right) \ud \tilde{\beta}(p_0,d,d') \notag \\
			&\stackrel{\text{(a.4)}}{=} \int_{\mathbb{R}^m \times \mathbb{R}^r} c\left((f^{(k)}(p_0,d),d), (h(u_k,d),d)\right) \ud \alpha(p_0,d) \notag \\
			&\stackrel{\text{(a.5)}}{=} \int_{\mathbb{R}^m \times \mathbb{R}^r} \left\|f^{(k)}(p_0,d) - h(u_k,d)\right\|_P \ud \alpha(p_0,d). \label{eq:wass_dist_indiv_bd:mid}
	\end{align}
	In \eqref{eq:wass_dist_indiv_bd:mid}, (a.1) holds because $f^{(k)}$ and $h$ are continuous and hence Borel maps, allowing us to use the property of the pushforward operation, see \citet[Proposition~3]{aolaritei2022uncertainty}. In (a.2), $\tilde{\beta}$ is a specific coupling in $\Gamma(\alpha(p_0,d), \mu_d(d'))$, and $\pi_1$ and $\pi_2$ denote the projections on the first and second variables, respectively, i.e., $\pi_1: (p_0,d,d') \mapsto p_0$ and $\pi_2: (p_0,d,d') \mapsto d$. Consequently, the joint distribution of the first two variables of $\big((\pi_1,\pi_2,\pi_2)_{\#} \tilde{\beta}\big)(p_0,d,d')$, namely $(\pi_1,\pi_2)_{\#} \beta$, is $\alpha$, and the marginal distribution of the last variable is $\mu_d$. Since $\big((\pi_1,\pi_2,\pi_2)_{\#} \tilde{\beta}\big)(p_0,d)$ is also a coupling in $\Gamma(\alpha(p_0,d), \mu_d(d'))$, (a.2) is true. Further, (a.3) invokes the change of variable formula for pushforward measures. Moreover, (a.4) follows from the fact that the integrand is independent of $d'$, allowing the integration to be performed with respect to the joint distribution $\alpha$ of $p_0$ and $d$. Finally, (a.5) leverages the metric $c$ in \eqref{eq:metric_mix_norm}, where the distance related to the second component is zero for the same $d$. Therefore, \eqref{eq:wass_dist_indiv_bd} is proved.

	We proceed to establish a recursive inequality of $V_k$. For any $k \in \mathbb{N}_{+}$,
	\begin{align}\label{eq:wass_dist_bd_contract_mid}
		V_k &= \int_{\mathbb{R}^m \times \mathbb{R}^r} \left\|f^{(k)}(p_0,d) - h(u_k,d)\right\|_P \ud \alpha(p_0,d) \notag \\
			&\stackrel{\text{(a.1)}}{=} \int_{\mathbb{R}^m \times \mathbb{R}^r} \left\|f\big(f^{(k-1)}(p_0,d),u_k,d\big) - f(h(u_k,d),u_k,d)\right\|_P \ud \alpha(p_0,d) \notag \\
			&\stackrel{\text{(a.2)}}{\leq} L_f^p \int_{\mathbb{R}^m \times \mathbb{R}^r} \left\|f^{(k-1)}(p_0,d)-h(u_k,d)\right\|_P \ud \alpha(p_0,d) \notag \\
			&\stackrel{\text{(a.3)}}{\leq} L_f^p \underbrace{\int_{\mathbb{R}^m \times \mathbb{R}^r} \|f^{(k-1)}(p_0,d) \!-\! h(u_{k\!-\!1},d)\|_P \ud \alpha(p_0,d)}_{\numcircled{1}=V_{k-1}} \notag \\
			&\qquad + L_f^p \underbrace{\int_{\mathbb{R}^m \times \mathbb{R}^r} \|h(u_{k\!-\!1},d) \!-\! h(u_k,d)\|_P \ud \alpha(p_0,d)}_{\numcircled{2}},
	\end{align}
	where (a.1) is due to the distribution dynamics \eqref{eq:pop_dynamics} and the steady state $h(u_k,d)$ satisfying the fixed-point equation $f(h(u_k,d),u_k,d) = h(u_k,d)$; (a.2) leverages the property that $f$ is $L_f^p$-Lipschitz in $p$, see \cref{assump:stable_system}; (a.3) uses the triangle inequality. In \eqref{eq:wass_dist_bd_contract_mid}, term \numcircled{1} corresponds to $V_k$, and term \numcircled{2} can be bounded from above by
	\begin{align*}
		\numcircled{2} &\stackrel{\text{(a.1)}}{\leq} \sqrt{\lambda_{\max}(P)} \int_{\mathbb{R}^m \times \mathbb{R}^r} \|h(u_{k-1},d) - h(u_k,d)\| \ud \alpha(p_0,d) \\
			&\stackrel{\text{(a.2)}}{\leq} L_h^u \sqrt{\lambda_{\max}(P)} \int_{\mathbb{R}^m \times \mathbb{R}^r} \|u_{k-1}-u_k\| \ud \alpha(p_0,d) \\
			&\stackrel{\text{(a.3)}}{\leq} L_h^u \sqrt{\lambda_{\max}(P)} \|u_k - u_{k-1}\|,
	\end{align*}
	where (a.1) uses \cref{lem:weighted_norm} in \cref{app:lemmas}; (a.2) follows from the fact that $h$ is $L_h^u$-Lipschitz in $u$, see the discussion below \cref{assump:stable_system}; (a.3) is due to $\int 1 \ud \alpha(p_0,d) = 1$. We incorporate this upper bound into \eqref{eq:wass_dist_bd_contract_mid} and prove \eqref{eq:wass_dist_bd_contract}.

	Finally, we demonstrate that the initial upper bound $V_0$ on $W_1(\gamma_0,\gammass(u_0))$ is finite. Note that
	\begin{align}\label{eq:wass_dist_bd_init}
		V_0 &\stackrel{\text{(a.1)}}{=} \int_{\mathbb{R}^m \times \mathbb{R}^r} \|p_0 - h(u_0,d)\|_P \ud \alpha(p_0,d) \notag \\
			&\stackrel{\text{(a.2)}}{\leq} \sqrt{\lambda_{\max}(P)} \int_{\mathbb{R}^m \times \mathbb{R}^r} \|p_0 - h(u_d,d)\| \ud \alpha(p_0,d) \notag \\
			&\stackrel{\text{(a.3)}}{\leq} \sqrt{\lambda_{\max}(P)} \bigg(\int_{\mathbb{R}^m \times \mathbb{R}^r} \|p_0\|\ud \alpha(p_0,d) + \int_{\mathbb{R}^m \times \mathbb{R}^r} \|h(u_0,d)\|\ud \alpha(p_0,d) \bigg) \notag \\
			&\stackrel{\text{(a.4)}}{\leq} \sqrt{\lambda_{\max}(P)} \bigg(\underbrace{\int_{\mathbb{R}^m} \|p_0\|\ud \mu_0(p_0)}_{\numcircled{1}} + \underbrace{\int_{\mathbb{R}^r} \|h(u_0,d)\|\ud \mu_d(d)}_{\numcircled{2}} \bigg)
	\end{align}
	where (a.1) follows from the definition $f^{(0)}(p_0,d) = p_0$; (a.2) uses \cref{lem:weighted_norm} in \cref{app:lemmas}; (a.3) is due to the triangle inequality; (a.4) holds because the integrands only depend on the corresponding random variables and not on their joint distribution. In \eqref{eq:wass_dist_bd_init}, term \numcircled{1} is finite thanks to $\mu_0 \in \mathcal{P}_1(\mathbb{R}^m)$, implying that $\mu_0$ admits a finite absolute moment. We now show that term \numcircled{2} in \eqref{eq:wass_dist_bd_init} is finite. In fact,
	\begin{align*}
		\int_{\mathbb{R}^r} \|h(u_0,d)\| \ud \mu_d(d) &\stackrel{\text{(a.1)}}{\leq} \int_{\mathbb{R}^r} \|h(u_0,d) - h(u_0,0)\| \ud \mu_d(d) + \int_{\mathbb{R}^r} \|h(u_0,0)\| \ud \mu_d(d) \\
			&\stackrel{\text{(a.2)}}{\leq} L_h^d \int_{\mathbb{R}^r} \|d\| \ud \mu_d(d) + \|h(u_0,0)\| < \infty,
	\end{align*}
	where (a.1) invokes the triangle inequality, and (a.2) holds because $h(u_0,d)$ is $L_h^d$-Lipschitz in $d$ and $\int_{\mathbb{R}^r} 1 \ud \mu_d(d) = 1$. We know that $\int_{\mathbb{R}^r} \|d\| \ud \mu_d(d)$ is finite due to $\mu_d \in \mathcal{P}_1(\mathbb{R}^r)$. Since both terms \numcircled{1} and \numcircled{2} in \eqref{eq:wass_dist_bd_init} are finite, the initial bound $V_0$ is also finite.
\end{proof}

\subsection{Proof of \texorpdfstring{\cref{thm:wasserstein_evolution_general}}{Theorem~\ref*{thm:wasserstein_evolution_general}}}\label{app:proof_thm_evolve}
\begin{proof}
	For any $k \in \mathbb{N}$, the Wasserstein distance $W_1(\gamma_k,\gammass(u_k))$ is always nonnegative, because the metric $c$ given in \eqref{eq:metric_mix_norm} will not be negative. Therefore, the corresponding upper bound $V_k$ is also nonnegative. Hence, we start from \eqref{eq:wass_dist_bd_contract} in \cref{lem:wass_dist_bd_recur}, apply \eqref{eq:decreasing_seq_sum_bd} in \cref{lem:seq_square_sum}, and obtain
	\begin{equation*}
		\sum_{k=0}^{T-1} V_k^2 \leq \frac{V_0^2}{1-\rho_1} + \frac{\rho_2}{1-\rho_1} \sum_{k=1}^{T} \|u_k-u_{k-1}\|^2,
	\end{equation*}
	where $\rho_1$ and $\rho_2$ are given in \cref{thm:wasserstein_evolution_general}. The upper bound \eqref{eq:wass_dist_indiv_bd} and the non-negativity of the Wasserstein distance imply that $0 \leq W_1(\gamma_k,\gammass(u_k)) \leq V_k$, which leads to
	\begin{equation}\label{eq:sum_square_wasser_bd}
		\sum_{k=0}^{T-1} W_1(\gamma_k,\gammass(u_k))^2 \leq \sum_{k=0}^{T-1} V_k^2. 
	\end{equation}
	Therefore, \eqref{eq:dist_square_sum_general} holds. The update rule~\eqref{eq:stochastic_alg} indicates that $u_k - u_{k-1} = -\eta\nablabatch{k-1}(u_{k-1})$. We incorporate this expression into \eqref{eq:dist_square_sum_general} and arrive at \eqref{eq:dist_square_sum_gd}.
\end{proof}

\subsection{Proof of \texorpdfstring{\cref{lem:grad_err_bound}}{Lemma~\ref*{lem:grad_err_bound}}}\label{app:proof_lem_grad_err}
\begin{proof}
	Let $u_k \in \mathbb{R}^n$ be given. Consider the following function $\Psi: \mathbb{R}^{m} \times \mathbb{R}^r \to \mathbb{R}^n$ representing the full gradient inside the expectations in \eqref{eq:ss_distr_stoch_grad} and \eqref{eq:cur_distr_stoch_grad}
	\begin{equation*}
		\Psi(p,d) = \nabla_u \Phi(u_k,p) + \nabla_u h(u_k,d)\nabla_p \Phi(u_k,p).
	\end{equation*}
	We now demonstrate that $\Psi(p,d)$ is Lipschitz continuous in $(p,d)$ with respect to the metric $c$ defined in \eqref{eq:metric_mix_norm}. For any $p_1,p_2 \in \mathbb{R}^m$ and $d_1,d_2 \in \mathbb{R}^r$,
	\begin{align*}
		\|\Psi(p_1,d_1) -& \Psi(p_2,d_2)\| \stackrel{\text{(a.1)}}{\leq} \|\Psi(p_1,d_1) - \Psi(p_1,d_2)\| + \|\Psi(p_1,d_2) - \Psi(p_2,d_2)\| \notag \\
			\stackrel{\text{(a.2)}}{\leq}& \|\nabla_u h(u_k,d_1) - \nabla_u h(u_k,d_2)\|\,\|\nabla_p \Phi(u_k,p_1)\| \notag \\
			&+ \|\nabla_u \Phi(u_k,p_1) - \nabla_u \Phi(u_k,p_2)\| + \|\nabla_u h(u_k,d_2)\|\, \|\nabla_p \Phi(u_k,p_1) - \nabla_p \Phi(u_k,p_2)\| \notag \\
			\stackrel{\text{(a.3)}}{\leq}& L_\Phi^p M_h^d \|d_1 - d_2\| + \left(M_\Phi^u + L_h^u M_\Phi^p\right) \|p_1 - p_2\| \notag \\
			\stackrel{\text{(a.4)}}{\leq}& L(\|p_1-p_2\|_P + \|d_1-d_2\|),
	\end{align*}
	where (a.1) follows from the triangle inequality; (a.2) uses the fact that norms are sub-multiplicative and the triangle inequality; (a.3) invokes the Lipschitz continuity of $\nabla_u h(u_k,d)$, $\nabla_u \Phi(u_k,p)$, $\nabla_p \Phi(u_k,p)$, $\Phi(u_k,p)$, and $h(u_k,d)$ thanks to \cref{assump:stable_system,assump:obj_property}, indicating $\|\nabla_p \Phi(u_k,p_2)\| \leq L_\Phi^p$ and $\|\nabla_u h(u_k,d_2)\| \leq L_h^u$; (a.4) uses \eqref{eq:weight_norm_bd} in \cref{lem:weighted_norm} to link with the weighted norm $\|\cdot\|_P$. Afterward, we leverage \cref{lem:function_closeness} in \cref{app:lemmas} to obtain \eqref{eq:grad_err_bd_wasserstein}.
\end{proof}

\section{Proofs for \texorpdfstring{\cref{subsec:optimality}}{Section~\ref*{subsec:optimality}}}
\subsection{Proof of \texorpdfstring{\cref{thm:optimality_nonconvex}}{Theorem~\ref*{thm:optimality_nonconvex}}}\label{app:proof_thm_optimality}
The overarching idea is to analyze the coupled evolution of the distribution dynamics~\eqref{eq:pop_dynamics} and the algorithm~\eqref{eq:stochastic_alg}. To this end, we first quantify the cumulative Wasserstein metric related to the distribution~\eqref{eq:pop_dynamics}. Afterward, we focus on the iterative update of the algorithm~\eqref{eq:stochastic_alg} and synthesize the overall convergence measure, i.e., the average expected second moment of gradients.

We start with the following lemma that characterizes the cumulative sum of the squared Wasserstein distances between the distribution at each time and the corresponding steady-state distribution. It is built on \cref{lem:wass_dist_bd_recur} and further uses the condition \eqref{eq:step_size_expt} of the step size in \cref{thm:optimality_nonconvex}, thereby exposing the true gradient $\nabla \obj(u_k)$ in the upper bound on this cumulative sum.

\begin{lemma}\label{lem:wasserstein_dist_sum}
	Under the conditions of \cref{thm:optimality_nonconvex}, we have
	\begin{align}\label{eq:wasserstein_dist_sum}
		\sum_{k=0}^{T-1} \EXPT{W_1(\gamma_k,\gammass(u_k))^2} &\leq \frac{1}{6L^2} \sum_{k=0}^{T-1} \EXPT{\|\nabla \obj(u_k)\|^2} + \frac{7 V_0^2}{6(1-\rho_1)} + \frac{M}{12L^2(M_V \!+\! \nmb)}.
	\end{align}
\end{lemma}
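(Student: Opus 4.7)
The plan is to start from the cumulative squared Wasserstein bound already established in \cref{thm:wasserstein_evolution_general}, namely inequality \eqref{eq:dist_square_sum_gd}, and to replace the mini-batch gradient on its right-hand side by the true steady-state gradient $\nabla \obj(u_k)$. Taking total expectations in \eqref{eq:dist_square_sum_gd} and invoking the tower rule together with \cref{lem:mini_batch_grad} converts $\EXPT{\|\nablabatch{k}\obj(u_k)\|^2}$ into $\tfrac{M}{\nmb} + \bigl(\tfrac{M_V}{\nmb}+1\bigr)\EXPT{\|\widenabla\obj(u_k)\|^2}$, so that the right-hand side decomposes into a constant piece of size $\tfrac{\eta^2 \rho_2 T M}{\nmb(1-\rho_1)}$ plus a weighted sum of $\EXPT{\|\widenabla\obj(u_k)\|^2}$.

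The next step connects $\widenabla\obj(u_k)$ to $\nabla\obj(u_k)$. Writing $\widenabla\obj(u_k) = \nabla\obj(u_k) + e_k$ as in \eqref{eq:error_grad}, applying $\|a+b\|^2 \leq 2\|a\|^2 + 2\|b\|^2$, and then using \cref{lem:grad_err_bound} yields $\EXPT{\|\widenabla\obj(u_k)\|^2} \leq 2\EXPT{\|\nabla\obj(u_k)\|^2} + 2L^2 \EXPT{W_1(\gamma_k,\gammass(u_k))^2}$. Substituting back and abbreviating $A = \sum_{k=0}^{T-1} \EXPT{W_1(\gamma_k,\gammass(u_k))^2}$ and $B = \sum_{k=0}^{T-1} \EXPT{\|\nabla\obj(u_k)\|^2}$, one obtains a self-referential inequality of the form
\begin{equation*}
  A \;\leq\; \tfrac{V_0^2}{1-\rho_1} \;+\; \tfrac{\eta^2 \rho_2 T M}{\nmb(1-\rho_1)} \;+\; \tfrac{2\eta^2 \rho_2}{1-\rho_1}\bigl(\tfrac{M_V}{\nmb}+1\bigr) B \;+\; \tfrac{2\eta^2 \rho_2 L^2}{1-\rho_1}\bigl(\tfrac{M_V}{\nmb}+1\bigr) A.
\end{equation*}

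The crucial computation is now that the step-size condition \eqref{eq:step_size_expt} forces $\eta^2 \leq \tfrac{1}{T}\cdot\tfrac{1-\rho_1}{14\rho_2 L^2(M_V/\nmb+1)}$, which makes the self-coefficient of $A$ on the right at most $\tfrac{1}{7T} \leq \tfrac{1}{7}$, so it can be absorbed into the left-hand side with multiplier $\tfrac{7}{6}$. The same bound on $\eta^2$, applied to the coefficient of $B$, gives $\tfrac{7\eta^2 \rho_2}{3(1-\rho_1)}(M_V/\nmb+1) \leq \tfrac{1}{6L^2 T} \leq \tfrac{1}{6L^2}$, and applied to the variance piece gives $\tfrac{7\eta^2 \rho_2 T M}{6\nmb(1-\rho_1)} \leq \tfrac{M}{12L^2(M_V+\nmb)}$. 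These three bounds are exactly the three summands on the right of \eqref{eq:wasserstein_dist_sum}.

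The main obstacle is essentially bookkeeping: one has to calibrate the constants so that the $\tfrac{1}{7}$ absorption factor and the resulting $\tfrac{7}{6}$ multiplier mesh with the factor $2$ arising from $\|a+b\|^2 \leq 2\|a\|^2 + 2\|b\|^2$ and the factor $(M_V/\nmb+1)$ coming from the variance bound, so as to reproduce the precise constants $\tfrac{1}{6L^2}$, $\tfrac{7}{6(1-\rho_1)}$, and $\tfrac{1}{12L^2(M_V+\nmb)}$ in the statement. The design of the $14$ appearing in \eqref{eq:step_size_expt} is tailored precisely for this absorption to go through while leaving clean constants.
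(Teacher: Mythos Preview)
Your proposal is correct and follows essentially the same approach as the paper: both arrive at the intermediate inequality \eqref{eq:wasserstein_square_sum_pre}, then use the decomposition $\widenabla\obj(u_k)=\nabla\obj(u_k)+e_k$ together with \cref{lem:grad_err_bound} to obtain the self-referential bound \eqref{eq:wasserstein_dist_sum_mid}, and finally invoke the step-size condition \eqref{eq:step_size_expt} to absorb the self-coefficient $\tfrac{2\eta^2\rho_2 L^2}{1-\rho_1}(M_V/\nmb+1)\leq\tfrac{1}{7}$ and reproduce the stated constants. The only cosmetic difference is that you start directly from the already-proven \eqref{eq:dist_square_sum_gd} and take expectations, whereas the paper re-derives \eqref{eq:wasserstein_square_sum_pre} from the recursive bound \eqref{eq:wass_dist_bd_contract} on $V_k$ via \cref{lem:seq_square_sum}; the two routes are equivalent.
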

\begin{proof}
	We first prove the following intermediate inequality
	\begin{equation}\label{eq:wasserstein_square_sum_pre}
    	\sum_{k=0}^{T-1} \EXPT{W_1(\gamma_k,\gammass(u_k))^2} \leq \frac{V_0^2}{1-\rho_1} + \frac{\eta^2\rho_2 M T}{(1-\rho_1)\nmb} + \frac{\eta^2\rho_2}{1-\rho_1} \cdot \left(\frac{M_V}{\nmb} \!+\! 1\right) \sum_{k=0}^{T-1} \EXPT{\|\widenabla \obj(u_k)\|^2},
	\end{equation}
	where the coefficients $\rho_1$ and $\rho_2$ are specified in \cref{thm:wasserstein_evolution_general}. 
	We know from the update rule~\eqref{eq:stochastic_alg} that $u_k - u_{k-1} = -\eta \nablabatch{k-1} \obj(u_{k-1})$. We plug this expression into \eqref{eq:wass_dist_bd_contract}, invoke \eqref{eq:square_recursive_ineq} in \cref{lem:seq_square_sum}, \cref{app:lemmas} (note that $(V_k)_{k \in \mathbb{N}}$ is a nonnegative sequence, see \cref{app:proof_thm_evolve}), and obtain 
	\begin{equation}\label{eq:squared_wasserstein_recursive}
		V_k^2 \leq \rho_1 V_{k-1}^2 + \eta^2 \rho_2 \|\nablabatch{k-1} \obj(u_{k-1})\|^2.
	\end{equation}
	Recall that $\mathcal{F}_k$ is the $\sigma$-algebra generated by $\nablabatch{0} \obj(u_0), \ldots, \nablabatch{k} \obj(u_k)$. It follows that
	\begin{align*}
		\EXPT{V_k^2 \big| \calF_{k\!-\!2}} &\leq \rho_1 \EXPT{V_{k-1}^2 \big| \calF_{k\!-\!2}} + \eta^2\rho_2 \EXPT{\|\nablabatch{k} \obj(u_{k\!-\!1})\|^2 \big| \calF_{k\!-\!2}} \\
			&\stackrel{\text{(a.1)}}{\leq} \rho_1 \EXPT{V_{k-1}^2 \big| \calF_{k-2}} + \frac{\eta^2 \rho_2M}{\nmb} + \eta^2 \rho_2 \left(\frac{M_V}{\nmb} + 1\right) \|\widehat{\nabla}^{k-1} \obj(u_{k-1})\|^2,
	\end{align*}
	where (a.1) is due to \eqref{eq:second_mom_batch_grad} in \cref{lem:mini_batch_grad}. We further take the total expectation of both sides of the above inequality, use the tower rule, and arrive at
	\begin{equation*}
			\EXPT{V_k^2} \leq \rho_1 \EXPT{V_{k-1}^2} + \frac{\eta^2 \rho_2 M}{\nmb} + \eta^2 \rho_2 \left(\frac{M_V}{\nmb} + 1\right) \EXPT{\|\widehat{\nabla}^{k-1} \obj(u_{k-1})\|^2}
	\end{equation*}
	Moreover, we leverage \eqref{eq:decreasing_seq_sum_bd} in \cref{lem:seq_square_sum}, \cref{app:lemmas} and \eqref{eq:sum_square_wasser_bd} to obtain the inequality \eqref{eq:wasserstein_square_sum_pre}.

	We now focus on the last term of \eqref{eq:wasserstein_square_sum_pre}. Recall from \eqref{eq:ss_distr_stoch_grad} and \eqref{eq:cur_distr_stoch_grad} in \cref{subsec:sg_intuition} that $\nabla \obj(u_k)$ and $\widenabla \obj(u_k)$ are the true gradient and the approximate gradient at $u_k$, respectively. Moreover, $e_k$ is the difference of $\widenabla \obj(u_k)$ and $\nabla \obj(u_k)$, see \eqref{eq:error_grad}. Therefore,
	\begin{align}\label{eq:bd_sec_mom_approx_grad}
		\EXPT{\|\widenabla \obj(u_k)\|^2} &\stackrel{\text{(a.1)}}{\leq} 2\EXPT{\|\nabla \obj(u_k)\|^2} + 2\EXPT{\|e_k\|^2} \notag \\
			&\stackrel{\text{(a.2)}}{\leq} 2\EXPT{\|\nabla \obj(u_k)\|^2} + 2L^2 \EXPT{W_1(\gamma_k,\gammass(u_k))^2},
	\end{align}
	where (a.1) uses the inequality $\|a+b\|^2 \leq 2\|a\|^2 + 2\|b\|^2, \forall a,b \in \mathbb{R}^n$, and (a.2) applies \eqref{eq:grad_err_bd_wasserstein} in \cref{lem:grad_err_bound}. We plug this upper bound into \eqref{eq:wasserstein_square_sum_pre}, rearrange terms, and obtain
	\begin{align}\label{eq:wasserstein_dist_sum_mid}
		\bigg(1 &- \frac{2\eta^2\rho_2L^2}{1-\rho_1} \left(\frac{M_V}{\nmb} + 1\right)\bigg) \sum_{k=0}^{T-1} \EXPT{W_1(\gamma_k,\gammass(u_k))^2} \notag \\
		 	&\leq \frac{2\eta^2\rho_2}{1-\rho_1} \left(\frac{M_V}{\nmb} + 1\right) \sum_{k=0}^{T-1} \EXPT{\|\nabla \obj(u_{k})\|^2} + \frac{V_0^2}{1-\rho_1} + \frac{\eta^2\rho_2 M T}{(1-\rho_1)\nmb}.
	\end{align}
	The parametric condition \eqref{eq:step_size_expt} of \cref{thm:optimality_nonconvex} leads to $\frac{2\eta^2\rho_2L^2}{1-\rho_1} \left(\frac{M_V}{\nmb} + 1\right) \leq \frac{1}{7 T} \leq \frac{1}{7}$. It follows that
	\begin{align*}
		&\frac{1}{1 \!-\! \frac{2\eta^2\rho_2L^2}{1-\rho_1} \left(\frac{M_V}{\nmb} \!+\! 1\right)} \cdot \frac{2\eta^2\rho_2}{1\!-\!\rho_1} \left(\frac{M_V}{\nmb} + 1\right) \leq \frac{1}{1\!-\!\frac{1}{7}}\cdot\frac{1}{7L^2} \leq \frac{1}{6L^2}, \\
		& \frac{1}{1 \!-\! \frac{2\eta^2\rho_2L^2}{1\!-\!\rho_1} \left(\frac{M_V}{\nmb} \!+\! 1\right)} \cdot \frac{\eta^2\rho_2 M T}{(1-\rho_1)\nmb} \leq \frac{1}{1\!-\!\frac{1}{7}}\cdot\frac{M}{14L^2(M_V \!+\! \nmb)} \leq \frac{M}{12L^2(M_V \!+\! \nmb)}.
	\end{align*}
	We divide both sides of \eqref{eq:wasserstein_dist_sum_mid} by $1 - \frac{2\eta^2\rho_2L^2}{1-\rho_1} \left(\frac{M_V}{\nmb} \!+\! 1\right)$, use the above upper bounds on coefficients, and obtain \eqref{eq:wasserstein_dist_sum}.
\end{proof}

With \cref{lem:wasserstein_dist_sum} in hand, we are ready to prove \cref{thm:optimality_nonconvex}.

\begin{proof}
	Since $\obj(u)$ is $L$-smooth, we have
	\begin{align}\label{eq:descent_lemma_intermediate}
		\obj(u_{k+1}) \leq& \obj(u_k) + \nabla\obj(u_k)^{\top}(u_{k+1}-u_k) + \frac{L}{2} \|u_{k+1} - u_k\|^2 \notag \\
			\stackrel{\text{(a.1)}}{=}& \obj(u_k) - \eta \nabla\obj(u_k)^{\top} \nablabatch{k} \obj(u_k) + \frac{L\eta^2}{2} \|\nablabatch{k} \obj(u_k)\|^2,
	\end{align}
	where (a.1) follows from the update $u_{k+1} - u_k = -\eta \nablabatch{k} \obj(u_k)$, see \eqref{eq:stochastic_alg}.
	Hence,
	\begin{align}\label{eq:descent_lemma_bd_pre}
		\EXPT{\obj(u_{k+1}) \big| \mathcal{F}_{k-1}} \stackrel{\text{(a.1)}}{\leq}& \obj(u_k) - \eta\nabla\obj(u_k)^\top \widenabla\obj(u_k) + \frac{L\eta^2}{2} \EXPT{\|\nablabatch{k} \obj(u_k)\|^2 \big| \mathcal{F}_{k-1}} \notag \\
			\stackrel{\text{(a.2)}}{\leq}& \obj(u_k) - \eta \|\nabla \obj(u_k)\|^2 - \eta \nabla \obj(u_k)^{\top}e_k \notag \\
			&+ \frac{L\eta^2}{2} \left(\frac{M}{\nmb} + \left(\frac{M_V}{\nmb} + 1\right) \|\widenabla \obj(u_k)\|^2\right) \notag \\
			\stackrel{\text{(a.3)}}{\leq}& \obj(u_k) - \left(\frac{\eta}{2} - L\eta^2\left(\frac{M_V}{\nmb} + 1\right) \right) \|\nabla \obj(u_k)\|^2 \notag \\
			&+ \left(\frac{\eta}{2} + L\eta^2\left(\frac{M_V}{\nmb} + 1\right) \right)\|e_k\|^2 + \frac{LM\eta^2}{2\nmb},
	\end{align}
	where (a.1) uses $\E[\nablabatch{k} \obj(u_k) \big| \mathcal{F}_{k-1}] = \widenabla \obj(u_k)$, see \eqref{eq:unbiased_batch_grad}; (a.2) uses the equality $\widenabla \obj(u_k) = \nabla\obj(u_k) + e_k$ (see \eqref{eq:error_grad}) and the upper bound \eqref{eq:second_mom_batch_grad}; (a.3) follows from the inequalities
	\begin{equation*}
		-\nabla\obj(u_k)^\top e_k \leq \frac{1}{2}\|\nabla\obj(u_k)\|^2 + \frac{1}{2}\|e_k\|^2, \qquad \|\widenabla \obj(u_k)\|^2 \leq 2\|\nabla \obj(u_k)\|^2 + 2\|e_k\|^2.
	\end{equation*}
	We take expectation of both sides of \eqref{eq:descent_lemma_bd_pre}, use the tower rule, and obtain
	\begin{align*}
		\EXPT{\obj(u_{k+1})} \leq& \EXPT{\obj(u_k)} - \left(\frac{\eta}{2} - L\eta^2\left(\frac{M_V}{\nmb} + 1\right) \right) \EXPT{\|\nabla \obj(u_k)\|^2} \\
			&+ \left(\frac{\eta}{2} + L\eta^2\left(\frac{M_V}{\nmb} + 1\right) \right) \EXPT{\|e_k\|^2} + \frac{L M\eta^2}{2\nmb}.
	\end{align*}
	We sum up both sides of the above inequality for $k=0,\ldots,T-1$, reorganize terms, and obtain
	\begin{equation}\label{eq:second_moment_sum_pre}
		\begin{split}
			\bigg(\frac{\eta}{2} &- L\eta^2\left(\frac{M_V}{\nmb} + 1\right) \bigg) \sum_{k=0}^{T-1} \EXPT{\|\nabla \obj(u_k)\|^2} \\
			&\leq \obj(u_0) - \EXPT{\obj(u_T)} + \left(\frac{\eta}{2} + L\eta^2\left(\frac{M_V}{\nmb} + 1\right) \right) \sum_{k=0}^{T-1} \EXPT{\|e_k\|^2} + \frac{LM\eta^2 T}{2\nmb}.
		\end{split}
	\end{equation}
	The parametric condition \eqref{eq:step_size_expt} of \cref{thm:optimality_nonconvex} ensures that $\frac{\eta}{2} - L\eta^2\left(\frac{M_V}{\nmb} + 1\right) \geq \frac{\eta}{4}$ and $\frac{\eta}{2} + L\eta^2\left(\frac{M_V}{\nmb} + 1\right) \leq \frac{3}{4}\eta$.
	We leverage \eqref{eq:grad_err_bd_wasserstein} and obtain
	\begin{equation}\label{eq:grad_err_square_sum}
		\sum_{k=0}^{T-1} \EXPT{\|e_k\|^2} \leq L^2 \sum_{k=0}^{T-1} \EXPT{W_1(\gamma_k,\gammass(u_k))^2}.
	\end{equation}
	Furthermore, we invoke \eqref{eq:wasserstein_dist_sum} in \cref{lem:wasserstein_dist_sum}. Hence, we establish the following upper bound
	\begin{align*}
		\frac{\eta}{8} \sum_{k=0}^{T-1} \EXPT{\|\nabla \obj(u_k)\|^2} \leq \obj(u_0) - \EXPT{\obj(u_T)} + \frac{7\eta L^2 V_0^2}{8(1-\rho_1)} + \frac{\eta M}{16(M_V+\nmb)} + \frac{LM\eta^2 T}{2\nmb}.
	\end{align*}
	We multiply both sides of the above inequality by $8/(\eta T)$ and arrive at
	\begin{align*}
		\frac{1}{T} \sum_{k=0}^{T-1} \EXPT{\|\nabla \obj(u_k)\|^2} \leq& \underbrace{\frac{8(\obj(u_0) \!-\! \obj^*)}{\eta T}}_{\sim \bigO{1/\sqrt{T}}} + \underbrace{\frac{4LM\eta}{\nmb}}_{\sim \bigO{1/\sqrt{T}}} + \underbrace{\frac{7L^2 V_0^2}{(1-\rho_1)T}}_{\sim \bigO{1/T}} + \underbrace{\frac{M}{2T (M_V \!+\! \nmb)}}_{\sim \bigO{1/T}},
	\end{align*}
	where we additionally use the fact that $\mathbb{E}[\obj(u_T)] \geq \obj^*$, because $\obj^*$ is the optimal value of problem~\eqref{eq:dd_opt_problem}. Therefore, \eqref{eq:average_second_moment_bd} and \eqref{eq:average_second_moment_complexity} are proved.
\end{proof}

\subsection{Proof of \texorpdfstring{\cref{cor:approximate_optimality}}{Corollary~\ref*{cor:approximate_optimality}}}\label{app:proof_cor_approx_opt}
\begin{proof}
	We know from \eqref{eq:bd_sec_mom_approx_grad} in \cref{app:proof_thm_optimality} that
	\begin{equation*}
		\frac{1}{T} \sum_{k=0}^{T-1} \EXPT{\|\widenabla \obj(u_k)\|^2} \leq \frac{2}{T} \sum_{k=0}^{T-1} \EXPT{\|\nabla \obj(u_k)\|^2} + \frac{2L^2}{T} \sum_{k=0}^{T-1} \EXPT{W_1(\gamma_k,\gammass(u_k))^2}.
	\end{equation*}
	Furthermore, we leverage \cref{lem:wasserstein_dist_sum} in \cref{app:proof_thm_optimality} to obtain
	\begin{align*}
		\frac{1}{T} \sum_{k=0}^{T-1} \EXPT{\|\widenabla \obj(u_k)\|^2} \leq \frac{7}{3T} \sum_{k=0}^{T-1} \EXPT{\|\nabla \obj(u_k)\|^2} \!+\! \underbrace{\frac{1}{T} \left(\frac{7L^2 V_0^2}{3(1\!-\!\rho_1)} \!+\! \frac{M}{6(M_V \!+\! \nmb)} \right)}_{\sim \bigO{1/T}}.
	\end{align*}
	By invoking \eqref{eq:average_second_moment_bd} and \eqref{eq:average_second_moment_complexity}, we prove \eqref{eq:average_approx_sec_mom_complexity}.
\end{proof}

\subsection{Proof of \texorpdfstring{\cref{thm:distribution_expt}}{Theorem~\ref*{thm:distribution_expt}}}\label{app:distribution_expt}
\begin{proof}
	We first derive the following upper bound
	\begin{align*}
		\frac{1}{T} \sum_{k=0}^{T-1} \EXPT{W_1(\gamma_k,\gammass(u_k))^2} &\stackrel{\textup{(a.1)}}{\leq} \frac{1}{6L^2 T} \sum_{k=0}^{T-1} \EXPT{\|\nabla \obj(u_k)\|^2} \!+\! \underbrace{\frac{1}{T} \left(\frac{7 V_0^2}{6(1\!-\!\rho_1)} \!+\! \frac{M}{12L^2(M_V \!+\! \nmb)}\right)}_{\sim \bigO{1/T}} \\
		&\stackrel{\textup{(a.2)}}{\leq} \underbrace{\frac{1}{6L^2} \left(\frac{8(\obj(u_0) - \obj^*)}{\eta T} + \frac{4LM\eta}{\nmb}\right)}_{\sim \bigO{1/\sqrt{T}}} + \bigO{\frac{1}{T}},
	\end{align*}
	where (a.1) follows from \cref{lem:wasserstein_dist_sum} in \cref{app:proof_thm_optimality}, and (a.2) invokes the upper bound \eqref{eq:average_second_moment_bd} in \cref{thm:optimality_nonconvex}. Furthermore, \cref{lem:am_qm_ineq} in \cref{app:lemmas} implies that
	\begin{equation*}
		\frac{1}{T} \sum_{k=0}^{T-1} \EXPT{W_1(\gamma_k, \gammass(u_k))} \leq \sqrt{\frac{1}{T} \sum_{k=0}^{T-1} \EXPT{W_1(\gamma_k,\gammass(u_k))^2}}.
	\end{equation*}
	 We combine the above inequalities and prove \cref{thm:distribution_expt}.
\end{proof}

\section{Proofs for \texorpdfstring{\cref{subsec:optimality_hp}}{Section~\ref*{subsec:optimality_hp}}}

\subsection{Proof of \texorpdfstring{\cref{thm:optimality_nonconvex_high_prob}}{Theorem~\ref*{thm:optimality_nonconvex_high_prob}}}\label{app:proof_thm_optimality_high_prob}
The proof resembles what we have presented in \cref{app:proof_thm_optimality}. The key idea is to quantify the coupled evolution of optimization iterations and dynamic distributions. However, to establish convergence with high probability rather than in expectation, we will derive and exploit some tail bounds related to stochastic errors of gradients. 

Throughout this subsection, we rely on the following decomposition
\begin{equation}\label{eq:batch_grad_err_sum}
	\nablabatch{k} \obj(u_k) = \widenabla \obj(u_k) + \xi_k = \nabla \obj(u_k) + e_k + \xi_k,
\end{equation}
where $e_k$ and $\xi_k$ are given in \eqref{eq:error_grad} and \eqref{eq:batch_grad_stoch_err}, respectively. Specifically, $e_k$ denotes the difference of expected gradients (i.e., $\widenabla \obj(u_k)$ and $\nabla \obj(u_k)$) arising from the discrepancy between $\gamma_k$ and $\gammass(u_k)$, and $\xi_k$ indicates the stochastic noise in the mini-batch gradient $\nablabatch{k} \obj(u_k)$ relative to its expectation $\widenabla \obj(u_k)$.

Along with the iteration~\eqref{eq:stochastic_alg}, the stochastic noise $\xi_k$ \eqref{eq:batch_grad_stoch_err} exerts a cumulative influence on the accuracy of solutions. In the following lemma, we provide high-probability bounds on some cumulative quantities that involve $\xi_k$ and are relevant to the overall convergence measure.
\begin{lemma}\label{lem:err_inner_prod_square}
	Let \cref{assump:sub_gaussian_noise} hold. For any $\tau_1,\tau_2 \in (0,1)$ and $\lambda > 0$, the iteration \eqref{eq:stochastic_alg} ensures
	\begin{subequations}
		\begin{align}
			\sum_{k=0}^{T-1} -\nabla \obj(u_k)^\top \xi_k &\leq \frac{3}{4} \lambda \sigmamb^2 \sum_{k=0}^{T-1} \|\nabla \obj(u_k)\|^2 + \frac{1}{\lambda} \ln\frac{1}{\tau_1}, \label{eq:cross_term_hp} \\
			\sum_{k=0}^{T-1} \|\xi_k\|^2 &\leq T \sigmamb^2\left(1 + \ln\frac{1}{\tau_2}\right), \label{eq:err_sec_mom_hp}
		\end{align}
	with probabilities at least $1-\tau_1$ and $1-\tau_2$, respectively.
	\end{subequations}
\end{lemma}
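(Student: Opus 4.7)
The plan is to prove both parts via the exponential supermartingale method combined with Markov's inequality, leveraging \cref{assump:sub_gaussian_noise}, the centeredness $\EXPT{\xi_k | \calF_{k-1}} = 0$ that follows from \eqref{eq:unbiased_batch_grad} and \eqref{eq:batch_grad_stoch_err}, and the $\calF_{k-1}$-measurability of $\nabla \obj(u_k)$.

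For \eqref{eq:err_sec_mom_hp}, I would first rewrite \cref{assump:sub_gaussian_noise} as $\EXPT{\exp(\|\xi_k\|^2/\sigmamb^2 - 1) | \calF_{k-1}} \leq 1$. Then $M_T \triangleq \exp(\sum_{k=0}^{T-1}(\|\xi_k\|^2/\sigmamb^2 - 1))$ is a nonnegative process with $\EXPT{M_T} \leq 1$, which follows by iterating the tower rule one step at a time. Markov's inequality then yields $M_T \leq 1/\tau_2$ with probability at least $1 - \tau_2$; taking logarithms and using $T + \ln(1/\tau_2) \leq T(1 + \ln(1/\tau_2))$ for $T \geq 1$ delivers \eqref{eq:err_sec_mom_hp}.

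For \eqref{eq:cross_term_hp}, I would aim to establish the conditional MGF bound
\begin{equation*}
    \EXPT{\exp(-\lambda \nabla \obj(u_k)^\top \xi_k) | \calF_{k-1}} \leq \exp(\tfrac{3}{4}\lambda^2 \sigmamb^2 \|\nabla \obj(u_k)\|^2).
\end{equation*}
Granted this bound, the same telescoping strategy applies to $N_T \triangleq \exp(\sum_{k=0}^{T-1}(-\lambda \nabla \obj(u_k)^\top \xi_k - \tfrac{3}{4}\lambda^2 \sigmamb^2 \|\nabla \obj(u_k)\|^2))$: the $\calF_{k-1}$-measurability of $\nabla \obj(u_k)$ allows the deterministic portion of the exponent to be pulled out of the conditional expectation, so $\EXPT{N_T} \leq 1$. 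Markov's inequality then yields, with probability at least $1 - \tau_1$, $\sum_{k=0}^{T-1} -\lambda \nabla \obj(u_k)^\top \xi_k \leq \tfrac{3}{4}\lambda^2 \sigmamb^2 \sum_{k=0}^{T-1} \|\nabla \obj(u_k)\|^2 + \ln(1/\tau_1)$; dividing by $\lambda$ recovers \eqref{eq:cross_term_hp}.

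The hard part will be establishing the conditional MGF bound with the particular constant $\tfrac{3}{4}$. A naive pointwise Young inequality such as $-\lambda \nabla \obj(u_k)^\top \xi_k \leq \tfrac{3}{4}\lambda^2 \sigmamb^2 \|\nabla \obj(u_k)\|^2 + \|\xi_k\|^2/(3\sigmamb^2)$ is insufficient, because exponentiating and taking conditional expectation via \cref{assump:sub_gaussian_noise} would produce a spurious $\mathcal{O}(T)$ additive term in the tail bound rather than the dimension-free $\ln(1/\tau_1)/\lambda$ appearing in \eqref{eq:cross_term_hp}. Instead, I would Taylor-expand $\exp(-\lambda \nabla \obj(u_k)^\top \xi_k)$ around zero so that the first-order term vanishes by centeredness, and then control the higher-order moments using the inequality $\EXPT{\|\xi_k\|^{2j} | \calF_{k-1}} \leq e \cdot j!\, \sigmamb^{2j}$ (which follows from expanding $\exp(\|\xi_k\|^2/\sigmamb^2)$ term by term in \cref{assump:sub_gaussian_noise}) together with Cauchy--Schwarz to reduce moments of $\nabla\obj(u_k)^\top \xi_k$ to moments of $\|\xi_k\|$. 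Summing the resulting power series in $\lambda$ and comparing it to the right-hand exponential will pin down the constant $\tfrac{3}{4}$.
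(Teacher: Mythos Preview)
Your proposal is correct and follows the same overarching strategy as the paper---exponential supermartingale plus Markov's inequality---but differs in execution on both parts.

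For \eqref{eq:err_sec_mom_hp}, the paper bounds $\EXPT{\exp\bigl(\sum_k \|\xi_k\|^2/(T\sigmamb^2)\bigr)}$ via H\"older's inequality (splitting the product into $T$ factors and raising each to the $T$-th power), whereas you peel off one factor at a time with the tower rule applied to $\exp\bigl(\sum_k(\|\xi_k\|^2/\sigmamb^2-1)\bigr)$. Your route is slightly more elementary and in fact yields the sharper intermediate bound $\sum_k\|\xi_k\|^2\le\sigmamb^2(T+\ln(1/\tau_2))$ before you relax it to match the stated form.

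For \eqref{eq:cross_term_hp}, the paper does not derive the per-step MGF bound itself: it observes (via Cauchy--Schwarz) that $-\nabla\obj(u_k)^\top\xi_k$ is a martingale difference with conditional sub-Gaussian parameter $\sigmamb\|\nabla\obj(u_k)\|$ in the sense $\EXPT{\exp\bigl((\nabla\obj(u_k)^\top\xi_k)^2/(\sigmamb\|\nabla\obj(u_k)\|)^2\bigr)\mid\calF_{k-1}}\le e$, and then invokes Lemma~1 of \citet{li2020high} as a black box. Your plan to re-derive that lemma via Taylor expansion and the moment bounds $\EXPT{\|\xi_k\|^{2j}\mid\calF_{k-1}}\le e\cdot j!\,\sigmamb^{2j}$ is exactly the standard proof of that cited result, so you are effectively unpacking what the paper outsources. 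Your diagnosis that a naive Young-inequality shortcut would introduce a spurious $\mathcal{O}(T)$ term is on point; the Taylor route (combined with the Cauchy--Schwarz reduction you mention) does recover the constant $\tfrac{3}{4}$ for all $\lambda>0$, though working this out cleanly requires a case split on the size of $\lambda\sigmamb\|\nabla\obj(u_k)\|$.
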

\begin{proof}
	Recall that $\calF_k$ is the $\sigma$-algebra generated by $\nablabatch{0} \obj(u_0),\ldots,\nablabatch{k}\obj(u_k)$. Hence, as a function of $\nablabatch{0} \obj(u_0),\ldots,\nablabatch{k-1} \obj(u_{k-1})$ determined by the update \eqref{eq:stochastic_alg}, $u_k$ is measurable with respect to $\calF_{k-1}$. Moreover, when conditioned on $\calF_{k-1}$, $\nablabatch{k} \obj(u_k)$ is an unbiased estimate of $\widenabla \obj(u_k)$, see \eqref{eq:unbiased_batch_grad}. Therefore,
	\begin{equation*}
		\EXPT{-\nabla \obj(u_k)^\top \xi_k \big|\calF_{k-1}}  = -\nabla\obj(u_k)^\top \EXPT{\nablabatch{k} \obj(u_k) - \widenabla \obj(u_k) \big|\calF_{k-1}} = 0.
	\end{equation*}
	Hence, $\big(-\nabla \obj(u_k)^\top \xi_k\big)_{k\in \mathbb{N}}$ is a martingale difference sequence. Furthermore, $\sigmamb\|\nabla\obj(u_k)\|$ is measurable with respect to $\calF_{k-1}$, and 
	\begin{align*}
		&\EXPT{\exp\left(\frac{(-\nabla\obj(u_k)^\top \xi_k)^2}{(\sigmamb\|\nabla\obj(u_k)\|)^2} \right) \bigg| \calF_{k-1}} \stackrel{\text{(a.1)}}{\leq} \EXPT{\exp\left(\frac{\|\nabla\obj(u_k)\|^2 \|\xi_k\|^2}{(\sigmamb\|\nabla\obj(u_k)\|)^2} \right) \bigg| \calF_{k-1}} \\
			&\quad= \EXPT{\exp\left(\frac{\|\xi_k\|^2}{\sigmamb^2}\right) \bigg| \calF_{k-1}} \stackrel{\text{(a.2)}}{\leq} \exp(1),
	\end{align*}
	where (a.1) uses the Cauchy-Schwarz inequality, and (a.2) is due to \eqref{eq:subgaussian_exp}. Then, we apply the martingale concentration inequality \citep[see][Lemma~1]{li2020high} and obtain \eqref{eq:cross_term_hp}.

	We proceed to prove \eqref{eq:err_sec_mom_hp}. Let $\sigmambbar \triangleq \sqrt{T}\sigmamb > 0$. We note that
	\begin{align}\label{eq:exp_sum_err_square}
		&\EXPT{\exp\left(\frac{\sum_{k=0}^{T-1} \|\xi_k\|^2}{\sigmambbar^2}\right)\bigg| \calF_{T-2}} = \EXPT{\prod_{k=0}^{T-1} \exp\left(\frac{\|\xi_k\|^2}{\sigmambbar^2}\right)\bigg| \calF_{T-2}} \notag \\
			&\quad\stackrel{\text{(a.1)}}{\leq} \prod_{k=0}^{T-1} \EXPT{\exp\left(\frac{\|\xi_k\|^2}{\sigmambbar^2}\right)^T\bigg| \calF_{T-2}}^\frac{1}{T} = \prod_{k=0}^{T-1} \EXPT{\exp\left(\frac{T\|\xi_k\|^2}{T\sigmamb^2}\right)\bigg| \calF_{T-2}}^\frac{1}{T} \notag \\ 
			&\quad \stackrel{\text{(a.2)}}{\leq} \prod_{k=0}^{T-1} [\exp(1)]^\frac{1}{T} = \exp(1),
	\end{align}
	where (a.1) follows from H\"older's inequality, see \cref{lem:holder_ineq} in \cref{app:lemmas}, and (a.2) uses \eqref{eq:subgaussian_exp}. We know from the law of total expectation that
	\begin{equation*}
		\EXPT{\exp\left(\frac{\sum_{k=0}^{T-1} \|\xi_k\|^2}{\sigmambbar^2}\right)} = \EXPT{\EXPT{\exp\left(\frac{\sum_{k=0}^{T-1} \|\xi_k\|^2}{\sigmambbar^2}\right) \bigg| \calF_{T-2}}} \leq \exp(1).
	\end{equation*}
	Therefore, for any constant $a>0$,
	\begin{align*}
		\PR{\sum_{k=0}^{T-1} \|\xi_k\|^2 > a} &= \PR{\exp\left(\frac{\sum_{k=0}^{T-1} \|\xi_k\|^2}{\sigmambbar^2}\right) > \exp\left(\frac{a}{\sigmambbar^2}\right)} \\
		&\stackrel{\text{(a.1)}}{\leq} \frac{\EXPT{\exp\left(\sum_{k=0}^{T-1} \|\xi_k\|^2/\sigmambbar^2\right)}}{\exp(a/\sigmambbar^2)} \stackrel{\text{(a.2)}}{\leq} \frac{\exp(1)}{\exp(a/\sigmambbar^2)},
	\end{align*}
	where (a.1) holds because of Markov's inequality, and (a.2) uses \eqref{eq:exp_sum_err_square}. For any fixed $\tau_2 \in (0,1)$, we set $a = T \sigmamb^2 \big(1+\ln \frac{1}{\tau_2}\big)$. It follows that \eqref{eq:err_sec_mom_hp} holds.
\end{proof}

Similar to \cref{lem:wasserstein_dist_sum}, the following lemma provides an upper bound on the cumulative squared Wasserstein distances. This cumulative sum quantifies the evolution of the distribution driven by the decision-maker. Due to the coupling between the distribution dynamics and the decision, the upper bound involves quantities related to gradients, which originate from optimization iterations.
\begin{lemma}\label{lem:wasserstein_dist_sum_hp}
	Under the conditions of \cref{thm:optimality_nonconvex_high_prob}, we have
	\begin{equation}\label{eq:wasserstein_dist_sum_hp}
		\sum_{k=0}^{T-1} W_1(\gamma_k,\gammass(u_k))^2 \leq \frac{1}{6L^2} \sum_{k=0}^{T-1} \|\nabla \obj(u_k)\|^2 + \frac{7 V_0^2}{6(1-\rho_1)} + \frac{1}{6L^2 T} \sum_{k=0}^{T-1} \|\xi_k\|^2.
	\end{equation}
\end{lemma}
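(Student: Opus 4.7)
The plan is to mirror the argument in the proof of Lemma~\ref{lem:wasserstein_dist_sum} given in Appendix~\ref{app:proof_thm_optimality}, but to work in a pathwise (rather than in-expectation) fashion. In place of Assumption~\ref{assump:var_grad} and the variance bound it supplied, I would decompose the mini-batch gradient additively as in \eqref{eq:batch_grad_err_sum}, namely $\nablabatch{k} \obj(u_k) = \nabla \obj(u_k) + e_k + \xi_k$, and simply carry the noise term $\xi_k$ through the computation as a free quantity; its cumulative behavior will be controlled later in a separate probabilistic step via Lemma~\ref{lem:err_inner_prod_square}, so at the level of the present lemma the argument is purely deterministic.

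Starting from inequality~\eqref{eq:dist_square_sum_gd} in Theorem~\ref{thm:wasserstein_evolution_general}, I would combine the decomposition above with the elementary bound $\|a+b+c\|^2 \leq 3(\|a\|^2+\|b\|^2+\|c\|^2)$ and the Wasserstein gradient-error estimate $\|e_k\|^2 \leq L^2 W_1(\gamma_k,\gammass(u_k))^2$ from Lemma~\ref{lem:grad_err_bound}. Rearranging then yields
\begin{equation*}
\left(1 - \frac{3\eta^2\rho_2 L^2}{1-\rho_1}\right) \sum_{k=0}^{T-1} W_1(\gamma_k,\gammass(u_k))^2 \leq \frac{V_0^2}{1-\rho_1} + \frac{3\eta^2\rho_2}{1-\rho_1} \sum_{k=0}^{T-1} \|\nabla\obj(u_k)\|^2 + \frac{3\eta^2\rho_2}{1-\rho_1} \sum_{k=0}^{T-1} \|\xi_k\|^2.
\end{equation*}
The step-size restriction \eqref{eq:step_size_hp} furnishes two consequences, $\frac{3\eta^2\rho_2L^2}{1-\rho_1} \leq \frac{1}{7T}$ and the weaker $\leq \frac{1}{7}$; dividing by the prefactor therefore inflates the right-hand side by at most $\frac{7}{6}$, immediately producing $\frac{7V_0^2}{6(1-\rho_1)}$ for the $V_0^2$ term.

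The key—and the source of the asymmetric coefficients $\frac{1}{6L^2}$ and $\frac{1}{6L^2 T}$ in the statement—is to apply the two consequences above \emph{selectively}: the weaker $\leq \frac{1}{7}$ yields the coefficient $\frac{1}{6L^2}$ in front of $\sum_{k=0}^{T-1}\|\nabla\obj(u_k)\|^2$, whereas the sharper $\leq \frac{1}{7T}$ yields $\frac{1}{6L^2 T}$ in front of $\sum_{k=0}^{T-1}\|\xi_k\|^2$. The main obstacle is therefore not conceptual but a matter of careful bookkeeping: one must keep track of which upper bound on $\frac{3\eta^2\rho_2}{1-\rho_1}$ is spent on which sum and verify that the resulting coefficients line up exactly with \eqref{eq:wasserstein_dist_sum_hp}. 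The same asymmetric strategy is already deployed in the proof of Lemma~\ref{lem:wasserstein_dist_sum}, where the tight bound is used for the $MT$ term and the loose one for $\sum_{k=0}^{T-1}\EXPT{\|\nabla\obj(u_k)\|^2}$; the present lemma is essentially a pathwise analogue in which $\|\xi_k\|^2$ plays the role previously occupied by the variance constant.
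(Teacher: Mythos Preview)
Your proposal is correct and follows essentially the same approach as the paper: start from \eqref{eq:dist_square_sum_gd}, expand $\|\nablabatch{k}\obj(u_k)\|^2$ via the decomposition $\nabla\obj(u_k)+e_k+\xi_k$ and the bound $\|a+b+c\|^2\le 3(\|a\|^2+\|b\|^2+\|c\|^2)$, absorb the $\|e_k\|^2$ term using Lemma~\ref{lem:grad_err_bound}, and then divide through using the step-size condition $\frac{3\eta^2\rho_2L^2}{1-\rho_1}\le\frac{1}{7T}\le\frac{1}{7}$ selectively to obtain the asymmetric coefficients. Your identification of the bookkeeping issue---applying the sharper $1/(7T)$ bound to the $\|\xi_k\|^2$ sum and the looser $1/7$ bound to the $\|\nabla\obj(u_k)\|^2$ sum---matches the paper exactly.
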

\begin{proof}
	We rely on \eqref{eq:dist_square_sum_gd} in \cref{thm:wasserstein_evolution_general}, \cref{subsec:distr_shift} and proceed by deriving an upper bound on the cumulative squared norm of $\nablabatch{k} \obj(u_k)$. 
	Based on \eqref{eq:batch_grad_err_sum}, we know
	\begin{align}\label{eq:decompose_batch_grad_squared}
		\|\nablabatch{k} \obj(u_k)\|^2 &= \|\nabla\obj(u_k) + e_k + \xi_k\|^2 \notag \\
			&\stackrel{\text{(a.1)}}{\leq} 3\|\nabla\obj(u_k)\|^2 + 3\|e_k\|^2 + 3\|\xi_k\|^2 \notag \\
			&\stackrel{\text{(a.2)}}{\leq} 3\|\nabla\obj(u_k)\|^2 + 3\|\xi_k\|^2 + 3L^2 W_1(\gamma_k,\gammass(u_k))^2, 
	\end{align}
	where (a.1) uses the inequality $\forall a,b,c \in \mathbb{R}^n, \|a+b+c\|^2 \leq 3(\|a\|^2+\|b\|^2+\|c\|^2)$, which is a vector-form extension of \cref{lem:am_qm_ineq} in \cref{app:lemmas}, and (a.2) follows from \eqref{eq:grad_err_bd_wasserstein}. We plug \eqref{eq:decompose_batch_grad_squared} into the right-hand side of \eqref{eq:dist_square_sum_gd}, rearrange terms, and obtain
	\begin{align}\label{eq:wasserstein_dist_sum_mid_hp}
		\bigg(1 \!-\!\frac{3\eta^2\rho_2L^2}{1-\rho_1} \bigg)
		 \sum_{k=0}^{T-1} W_1(\gamma_k,\gammass(u_k))^2 \leq \frac{V_0^2}{1-\rho_1} + \frac{3\eta^2\rho_2}{1-\rho_1}\sum_{k=0}^{T-1} \|\nabla \obj(u_k)\|^2 + \frac{3\eta^2\rho_2}{1-\rho_1}\sum_{k=0}^{T-1}\|\xi_k\|^2.
	\end{align}
	The parametric condition \eqref{eq:step_size_hp} of \cref{thm:optimality_nonconvex_high_prob} ensures that $\frac{3\eta^2\rho_2L^2}{1-\rho_1} \leq \frac{1}{7T} \leq \frac{1}{7}$, which leads to
	\begin{align*}
		\frac{1}{1 - \frac{3\eta^2\rho_2L^2}{1-\rho_1}} \cdot \frac{3\eta^2\rho_2}{1-\rho_1} \leq \frac{1}{1-\frac{1}{7}} \cdot \frac{1}{7L^2 T} \leq \frac{1}{6L^2 T} \leq \frac{1}{6L^2}.
	\end{align*}
	We divide both sides of \eqref{eq:wasserstein_dist_sum_mid_hp} by $1 - \frac{3\eta^2\rho_2L^2}{1-\rho_1}$, use the above bounds on coefficients, and obtain \eqref{eq:wasserstein_dist_sum_hp}.
\end{proof}

We give the complete proof of \cref{thm:optimality_nonconvex_high_prob}. The key idea is to synthesize the convergence measure (i.e., the average second moment of gradients) from optimization iterations and the characterization of distribution dynamics in \cref{lem:wasserstein_dist_sum_hp}. We also leverage the high-probability bounds in \cref{lem:err_inner_prod_square}.
\begin{proof}
	We incorporate \eqref{eq:batch_grad_err_sum} into \eqref{eq:descent_lemma_intermediate} and have
	\begin{align*}
		\obj(u_{k+1}) \leq& \obj(u_k) - \eta \nabla\obj(u_k)^\top (\nabla \obj(u_k) + e_k + \xi_k) + \frac{L\eta^2}{2} \|\nabla\obj(u_k) + e_k + \xi_k\|^2 \\
		\stackrel{\text{(a.1)}}{\leq}& \obj(u_k) - \left(\frac{\eta}{2} - \frac{3}{2} L\eta^2\right) \|\nabla \obj(u_k)\|^2 - \eta \nabla\obj(u_k)^\top \xi_k + \frac{3}{2} L\eta^2 \|\xi_k\|^2 \\
		&+ \left(\frac{\eta}{2} + \frac{3}{2} L\eta^2\right) L^2 W_1(\gamma_k,\gammass(u_k))^2,
	\end{align*}
	where (a.1) follows from the inequality $-\nabla\obj(u_k)^\top e_k \leq \frac{1}{2}\|\nabla\obj(u_k)\|^2 + \frac{1}{2} \|e_k\|^2$, \eqref{eq:decompose_batch_grad_squared}, and \eqref{eq:grad_err_bd_wasserstein}. We sum up both sides of the above inequality for $k=0,\ldots,T-1$, reorganize terms, and obtain
	\begin{align*}
		\left(\frac{\eta}{2} - \frac{3}{2} L\eta^2\right)\sum_{k=0}^{T-1} \|\nabla\obj(u_k)\|^2 \leq& \obj(u_0) - \obj(u_T) -\eta\sum_{k=0}^{T-1} \nabla\obj(u_k)^\top\xi_k + \frac{3}{2} L\eta^2 \sum_{k=0}^{T-1} \|\xi_k\|^2 \\
			&+ \left(\frac{\eta}{2} + \frac{3}{2} L\eta^2\right)L^2 \sum_{k=0}^{T-1} W_1(\gamma_k,\gammass(u_k))^2.
	\end{align*}
	The parametric condition \eqref{eq:step_size_hp} of \cref{thm:optimality_nonconvex_high_prob} guarantees that $\frac{\eta}{2} - \frac{3}{2} L\eta^2 \geq \frac{\eta}{4}$ and $\frac{\eta}{2} + \frac{3}{2} L\eta^2 \leq \frac{3}{4}\eta$. 
	Furthermore, we use the fact that $\obj(u_T)$ is not less than the optimal value $\obj^*$ of problem~\eqref{eq:dd_opt_problem}, incorporate the upper bound \eqref{eq:wasserstein_dist_sum_hp}, rearrange terms, and arrive at
	\begin{align}\label{eq:second_moment_sum_hp_pre}
		\frac{\eta}{8} \sum_{k=0}^{T-1} \|\nabla\obj(u_k)\|^2 \leq \obj(u_0) \!-\! \obj^* \underbrace{-\eta\sum_{k=0}^{T-1} \nabla\obj(u_k)^\top\xi_k}_{\numcircled{1}} + \underbrace{\left(\frac{3}{2} L\eta^2 \!+\! \frac{\eta}{8T}\right) \sum_{k=0}^{T-1} \|\xi_k\|^2}_{\numcircled{2}} + \frac{7}{8} \eta L^2 \frac{V_0^2}{1-\rho_1}.
	\end{align}
	For terms \numcircled{1} and \numcircled{2} in \eqref{eq:second_moment_sum_hp_pre}, we exploit the high-probability bounds in \cref{lem:err_inner_prod_square}. Specifically, we select $\tau_1 = \tau_2 = \tau/2$ and apply both \cref{lem:high_prob_sum_bd,lem:err_inner_prod_square}. Then, with probability at least $1-\tau$,
	\begin{align}\label{eq:sum_cross_squared_hp}
		\numcircled{1} + \numcircled{2} \leq \frac{3}{4}\eta\lambda\sigmamb^2 \sum_{k=0}^{T-1} \|\nabla\obj(u_k)\|^2 + \frac{\eta}{\lambda}\ln \frac{2}{\tau} + \left(\frac{3}{2}L\eta^2 + \frac{\eta}{8T}\right) T\sigmamb^2\left(1 + \ln\frac{2}{\tau}\right).
	\end{align}
	Since $\lambda$ can be any positive constant, we select $\lambda = \frac{1}{12\sigmamb^2}$. We then plug \eqref{eq:sum_cross_squared_hp} into \eqref{eq:second_moment_sum_hp_pre} and know that with probability at least $1-\tau$,
	\begin{align*}
		\frac{\eta}{16} \sum_{k=0}^{T-1} \|\nabla \obj(u_k)\|^2 \leq& \obj(u_0) \!-\! \obj^* \!+\! \frac{7}{8} \eta L^2 \frac{V_0^2}{1\!-\!\rho_1} \!+\! 12\eta \sigmamb^2 \ln \frac{2}{\tau} \!+\! \left(\frac{3}{2}L\eta^2 \!+\! \frac{\eta}{8T}\right) T\sigmamb^2\left(1 \!+\! \ln\frac{2}{\tau}\right).
	\end{align*}
	We multiply both sides of the above inequality by $16/(\eta T)$ and arrive at
	\begin{align*}
		\frac{1}{T} \sum_{k=0}^{T-1} \|\nabla \obj(u_k)\|^2 \leq& \underbrace{\frac{16(\obj(u_0) - \obj^*)}{\eta T}}_{\sim \bigO{1/\sqrt{T}}} + \underbrace{\left(24\eta L + \frac{2}{T}\right)\sigmamb^2}_{\sim \bigO{1/\sqrt{T}}} + \underbrace{24\eta L \sigmamb^2 \ln \frac{2}{\tau}}_{\sim \bigO{\ln(1/\tau)/\sqrt{T}}} \\
		 &+ \underbrace{\frac{194}{T} \sigmamb^2 \ln \frac{2}{\tau}}_{\sim \bigO{\ln(1/\tau)/T}} + \underbrace{\frac{14L^2 V_0^2}{(1-\rho_1)T}}_{\sim \bigO{1/T}},
	\end{align*}
	which holds with probability at least $1-\tau$. Hence, \cref{thm:optimality_nonconvex_high_prob} is proved.
\end{proof}

\subsection{Proof of \texorpdfstring{\cref{thm:distribution_high_prob}}{Theorem~\ref*{thm:distribution_high_prob}}}\label{app:distribution_high_prob}
\begin{proof}
	Starting from \cref{lem:wasserstein_dist_sum_hp} in \cref{app:proof_thm_optimality_high_prob}, we know that
	\begin{align}\label{eq:wasserstein_dist_avg_hp_mid}
		\frac{1}{T} \sum_{k=0}^{T-1} W_1(\gamma_k,\gammass(u_k))^2 \leq \frac{1}{6L^2} \cdot \frac{1}{T} \sum_{k=0}^{T-1} \|\nabla \obj(u_k)\|^2 + \frac{1}{6L^2 T^2} \sum_{k=0}^{T-1} \|\xi_k\|^2 + \bigO{\frac{1}{T}}.
	\end{align}
	\cref{thm:optimality_nonconvex_high_prob} in \cref{subsec:optimality_hp} indicates that with probability at least $1 - \frac{\tau}{2}$,
	\begin{equation}\label{eq:average_second_moment_high_prob_mdf}
		\frac{1}{T} \sum_{k=0}^{T-1} \|\nabla \obj(u_k)\|^2 \leq \frac{16(\obj(u_0) - \obj^*)}{\eta T} + 24\eta L\sigmamb^2 \left(1+\ln \frac{4}{\tau}\right) + \bigO{\frac{1}{T}\left(1+\ln \frac{1}{\tau}\right)}.
	\end{equation}
	Moreover, \cref{lem:err_inner_prod_square} in \cref{app:proof_thm_optimality_high_prob} implies that with probability at least $1 - \frac{\tau}{2}$,
	\begin{equation}\label{eq:err_sec_mom_hp_mdf}
		\sum_{k=0}^{T-1} \|\xi_k\|^2 \leq T \sigmamb^2\left(1 + \ln\frac{2}{\tau}\right).
	\end{equation}
	By incorporating the high-probability bounds \eqref{eq:average_second_moment_high_prob_mdf} and \eqref{eq:err_sec_mom_hp_mdf} into \eqref{eq:wasserstein_dist_avg_hp_mid} and using \cref{lem:high_prob_sum_bd} in \cref{app:lemmas}, we know that with probability at least $1-\tau$,
	\begin{align*}
		\frac{1}{T} \sum_{k=0}^{T-1} W_1(\gamma_k,\gammass(u_k))^2 \leq \underbrace{\frac{1}{6L^2} \left(\frac{16(\obj(u_0) \!-\! \obj^*)}{\eta T} \!+\! 24\eta L\sigmamb^2 \left(1\!+\!\ln \frac{4}{\tau}\right) \right)}_{\sim \bigO{(1+\ln(1/\tau))/\sqrt{T}}} \!+\! \bigO{\frac{1}{T}\left(1\!+\!\ln \frac{1}{\tau}\right)}.
	\end{align*}
	To link with the non-squared Wasserstein distance, we apply \cref{lem:am_qm_ineq} in \cref{app:lemmas} and obtain
	\begin{equation*}
		\frac{1}{T} \sum_{k=0}^{T-1} W_1(\gamma_k, \gammass(u_k)) \leq \sqrt{\frac{1}{T} \sum_{k=0}^{T-1} W_1(\gamma_k,\gammass(u_k))^2}.
	\end{equation*}
\end{proof}

\section{Proofs for \texorpdfstring{\cref{subsec:generalization}}{Section~\ref*{subsec:generalization}}}
\subsection{Proof of \texorpdfstring{\cref{lem:general_err}}{Lemma~\ref*{lem:general_err}}}\label{app:proof_lem_general}
We rely on the concentration of the empirical measure evaluated by Wasserstein distances \citep{fournier2015rate}. Such a concentration is concerned with the closeness between $\mu_d$ and $\mu_d^N$ with high probability. Since $\mu_d,\mu_d^N \in \mathcal{P}(\mathbb{R}^r)$, we define $W_1(\mu_d,\mu_d^N)$ as
\begin{equation}\label{eq:W1_dist_disturbance}
	W_1(\mu_d,\mu_d^N) = \inf_{\bar{\beta} \in \Gamma(\mu_d,\mu_d^N)} \int_{\mathbb{R}^r \times \mathbb{R}^r} \|d - d'\| \ud \bar{\beta}(d,d').
\end{equation}
The measure concentration argument is delineated in the following lemma.


\begin{lemma}[from {\citet[Theorem~2]{fournier2015rate}}]\label{lem:concentr_measure}
	Let the conditions of \cref{lem:general_err} hold.
	Then, for any $N \geq 1$,
	\begin{equation*}
		\PR{W_1(\mu_d, \mu_d^N) \leq \epsilon} \geq
		\begin{cases}
			1 - c_1 e^{-c_2 N \epsilon^r} & \text{if } \epsilon \in (0, 1], \\
			1 - c_1 e^{-c_2 N \epsilon^\theta} & \text{if } \epsilon \in (1,\infty),
		\end{cases}
	\end{equation*}
	where $c_1$ and $c_2$ are positive constants that only depend on $r,\theta,\kappa$, and $\mathcal{E}_{\theta,\kappa}(\mu_d)$.
\end{lemma}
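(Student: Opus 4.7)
The plan is a verification-and-citation exercise, since this lemma merely restates in our notation Theorem~2 of \citet{fournier2015rate}. I would first verify that their standing hypothesis holds under the conditions inherited from \cref{lem:general_err}: Fournier and Guillin require the target measure on $\mathbb{R}^r$ to admit a finite exponential moment of the form $\mathcal{E}_{\theta,\kappa}(\mu) = \int_{\mathbb{R}^r} e^{\kappa|x|^\theta}\,\mathrm{d}\mu(x) < \infty$ for some $\theta > 1$ and $\kappa > 0$, which is precisely what we assume for $\mu_d$. No additional structural hypothesis (e.g., support compactness, absolute continuity, log-concavity) is needed.

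Next, I would apply their Theorem~2 with parameters $p = 1$ (we use the type-$1$ Wasserstein distance $W_1$ defined in \eqref{eq:W1_dist_disturbance}) and ambient dimension $r$, acting on the i.i.d.\ sample $d^1, \ldots, d^N \sim \mu_d$ and its empirical distribution $\mu_d^N = \tfrac{1}{N}\sum_{i=1}^N \delta_{d^i}$. Their result furnishes the deviation inequality $\Pr(W_1(\mu_d, \mu_d^N) > \epsilon) \leq c_1 e^{-c_2 N \epsilon^r}$ for $\epsilon \in (0, 1]$ and $\Pr(W_1(\mu_d, \mu_d^N) > \epsilon) \leq c_1 e^{-c_2 N \epsilon^\theta}$ for $\epsilon \in (1, \infty)$, with positive constants $c_1, c_2$ depending only on $r, \theta, \kappa$, and $\mathcal{E}_{\theta,\kappa}(\mu_d)$. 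Passing to the complementary event then yields the claimed lower bound on $\Pr(W_1(\mu_d, \mu_d^N) \leq \epsilon)$ in both regimes.

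There is no substantive obstacle, as the argument is a direct invocation of a known concentration result. The only subtlety worth flagging is dimensional: Fournier and Guillin's small-$\epsilon$ exponent equals $r$ strictly in the high-dimensional regime $r \geq 3$, with a different functional form (and a logarithmic correction in the case $r = 2$) in low dimension. For the downstream use in \cref{lem:general_err} and \cref{thm:generalization_pop}, any such slack can be absorbed into the constants $c_1, c_2$ without affecting the resulting generalization rates, so the unified form $\epsilon^r$ stated here is adequate for our purposes.
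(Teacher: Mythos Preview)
Your proposal is correct and matches the paper's treatment: the paper does not prove this lemma at all but simply imports it verbatim from \citet[Theorem~2]{fournier2015rate}, so a verification-and-citation write-up is exactly what is called for. Your added remark about the dimensional caveat ($r \geq 3$ versus low dimension) is a useful clarification that the paper omits.
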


The proof of \cref{lem:general_err} is as follows.
\begin{proof}
	The empirical objective $\obj^N(u)$ can be equivalently written as $\obj^N(u) = \E_{p \sim \muss^N(u)}[\Phi(u,p)]$. Accordingly, the gradient $\nabla \obj^N(u)$ involves the empirical distribution $\gammass^N(u)$. For any $u \in \mathbb{R}^n$, in terms of the difference between the gradients given the distribution $\gammass(u)$ and the empirical distribution $\gammass^N(u) \triangleq (h(u,\cdot),\id)_{\#} \mu_d^N$, we have
	\begin{equation}\label{eq:diff_grad_pop_empr}
		\left\|\nabla \obj(u) - \nabla \obj^N(u)\right\| \stackrel{\text{(a.1)}}{\leq} L W_1(\gammass(u),\gammass^N(u)) \stackrel{\text{(a.2)}}{\leq} L \left(L_h^d\sqrt{\lambda_{\max}(P)}+1\right) W_1(\mu_d, \mu^N_d),
	\end{equation}
	where (a.1) is similar to \eqref{eq:grad_err_bd_wasserstein} in \cref{lem:grad_err_bound}, \cref{subsec:distr_shift}.  
	We derive (a.2) in \eqref{eq:diff_grad_pop_empr} as follows. Let $\bar{\beta}^*$ be an optimal coupling of $(\mu_d,\mu_d^N)$ in the sense of \eqref{eq:W1_dist_disturbance}, i.e., $\int \|d - d'\| \ud \bar{\beta}^*(d,d') = W_1(\mu_d,\mu_d^N)$. Such an optimal coupling exists, see \citet[Theorem~4.1]{villani2009optimal} for a formal proof. Then,
	\begin{align*}
		W_1(\gammass(u),\gammass^N(u)) &= W_1\big((h(u,\cdot),\id)_{\#}\mu_d, (h(u,\cdot),\id)_{\#}\mu_d^N\big) \\
			&\stackrel{\text{(a.1)}}{=} \inf_{\bar{\beta}(d,d') \in \Gamma(\mu_d,\mu_d^N)} \int_{\mathbb{R}^r \times \mathbb{R}^r} c\big((h(u,d),d), (h(u,d'),d')\big) \ud \bar{\beta}(d,d') \\
			&\stackrel{\text{(a.2)}}{\leq} \int_{\mathbb{R}^r \times \mathbb{R}^r} c\big((h(u,d),d), (h(u,d'),d')\big) \ud \bar{\beta}^*(d,d') \\
			&\stackrel{\text{(a.3)}}{\leq} \int_{\mathbb{R}^r \times \mathbb{R}^r} \left(\|h(u,d) - h(u,d')\|_P + \|d - d'\|\right) \ud \bar{\beta}^*(d,d') \\
			&\stackrel{\text{(a.4)}}{\leq} \left(L_h^d\sqrt{\lambda_{\max}(P)}+1\right) \int_{\mathbb{R}^r \times \mathbb{R}^r} \|d - d'\| \ud \bar{\beta}^*(d,d') \\
			&\stackrel{\text{(a.5)}}{=} \left(L_h^d\sqrt{\lambda_{\max}(P)}+1\right) W_1(\mu_d,\mu_d^N),
	\end{align*}
	where (a.1) follows from the property of the pushforward operation \citep[Proposition~3]{aolaritei2022uncertainty}, and the metric $c$ is given by \eqref{eq:metric_mix_norm}; (a.2) holds because $\bar{\beta}^*$ is a specific coupling of $(\mu_d,\mu_d^N)$, i.e., $\bar{\beta}^* \in \Gamma(\mu_d,\mu_d^N)$; (a.3) uses the expression \eqref{eq:metric_mix_norm}; (a.4) leverages the property that $h(u,d)$ is $L_h^d$-Lipschitz in $d$ and \eqref{eq:weight_norm_bd} in \cref{lem:weighted_norm}, \cref{app:lemmas}; (a.5) is true because $\bar{\beta}^*$ is an optimal coupling of $(\mu_d,\mu_d^N)$. Since \eqref{eq:diff_grad_pop_empr} holds for any $u \in \mathbb{R}^n$, we plug in $\{u_0^N, \ldots, u_{T-1}^N\}$ and know that
	\begin{align}\label{eq:general_err_wasser_dist}
		\frac{1}{T} \sum_{k=0}^{T-1} \|\nabla\obj(u_k^N) - \nabla\obj^N(u_k^N)\|^2 &\leq \frac{1}{T} \sum_{k=0}^{T-1} L^2 \left(L_h^d\sqrt{\lambda_{\max}(P)}+1\right)^2 W_1(\mu_d, \mu^N_d)^2 \notag \\
			&= L^2 \left(L_h^d\sqrt{\lambda_{\max}(P)}+1\right)^2 W_1(\mu_d, \mu^N_d)^2.
	\end{align}
	Furthermore, \cref{lem:concentr_measure} ensures that with probability at least $1 - \frac{\tau}{2}$,
	\begin{equation}\label{eq:measure_concen_wasser_dist}
		W_1(\mu_d,\mu_d^N) \leq
		\begin{cases}
			\left(\frac{1}{c_2 N} \ln\left(\frac{2c_1}{\tau} \right)\right)^{\frac{1}{r}}, &\text{if } N \geq \frac{1}{c_2} \ln\left(\frac{2c_1}{\tau} \right), \\
			\left(\frac{1}{c_2 N} \ln\left(\frac{2c_1}{\tau} \right)\right)^{\frac{1}{\theta}}, &\text{if } 1 \leq N < \frac{1}{c_2} \ln\left(\frac{2c_1}{\tau} \right).
		\end{cases}
	\end{equation}
	We combine \eqref{eq:measure_concen_wasser_dist} with \eqref{eq:general_err_wasser_dist} and prove \cref{lem:general_err}.
\end{proof}

\subsection{Proof of \texorpdfstring{\cref{thm:generalization_pop}}{Theorem~\ref*{thm:generalization_pop}}}\label{app:proof_thm_generalization}
\begin{proof}
	The key idea is to start from the decomposition \eqref{eq:pop_grad_decomposition} and then leverage the high-probability bounds on the generalization error (see \cref{lem:general_err}) and the optimization error (see \cref{thm:optimality_nonconvex_high_prob}).

	We exploit \eqref{eq:pop_grad_decomposition} in \cref{subsec:generalization} to obtain
	\begin{align}\label{eq:pop_grad_avg_decomposition}
		\frac{1}{T} \sum_{k=0}^{T-1} \|\nabla\obj(u_k)\|^2 \leq \underbrace{\frac{2}{T} \sum_{k=0}^{T-1} \|\nabla\obj(u_k) - \nabla\obj^N(u_k)\|^2}_{\numcircled{1}} + \underbrace{\frac{2}{T} \sum_{k=0}^{T-1} \|\nabla\obj^N(u_k)\|^2}_{\numcircled{2}}.
	\end{align}
	\cref{lem:general_err} provides the following upper bounds on term \numcircled{1} in \eqref{eq:pop_grad_avg_decomposition}:
	\begin{equation}\label{eq:bound_gen_err}
		\numcircled{1} \leq
		\begin{cases}
			2L^2 \left(L_h^d\sqrt{\lambda_{\max}(P)}+1\right)^2 \left(\frac{1}{c_2 N} \ln\left(\frac{2c_1}{\tau} \right)\right)^{\frac{2}{r}}, & \text{if } N \geq \frac{1}{c_2} \ln\left(\frac{2c_1}{\tau} \right), \\
			2L^2 \left(L_h^d\sqrt{\lambda_{\max}(P)}+1\right)^2 \left(\frac{1}{c_2 N} \ln\left(\frac{2c_1}{\tau} \right)\right)^{\frac{2}{\theta}}, & \text{if } 1 \leq N < \frac{1}{c_2} \ln\left(\frac{2c_1}{\tau} \right),
		\end{cases}
	\end{equation}
	which hold with probability at least $1-\tau/2$. For term \numcircled{2} in \eqref{eq:pop_grad_avg_decomposition}, we invoke \cref{thm:optimality_nonconvex_high_prob} and know that with probability at least $1-\tau/2$,
	\begin{align}\label{eq:bound_opt_emp_err}
		\numcircled{2} \leq \underbrace{\frac{32(\obj(u_0) - \obj^*)}{\eta T} + 48\eta L\sigmamb^2 \left(1+\ln \frac{4}{\tau}\right)}_{\sim \bigO{(1+\ln(1/\tau))/\sqrt{T}}} + \bigO{\frac{1}{T}\left(1+\frac{1}{\tau}\right)}.
	\end{align}
	Supported by \cref{lem:high_prob_sum_bd}, we combine the upper bounds \eqref{eq:bound_gen_err} and \eqref{eq:bound_opt_emp_err} and prove \cref{thm:generalization_pop}.
\end{proof}

\section{Proof for \texorpdfstring{\cref{sec:experiment}}{Section~\ref*{sec:experiment}}}
\subsection{Proof of \texorpdfstring{\cref{thm:polarized_sens}}{Theorem~\ref*{thm:polarized_sens}}}\label{app:proof_thm_polarized_sens}

\ifarxivVersion
We first present a lemma of the angles between vectors involving conic combinations.

\begin{lemma}\label{lem:conic_comb_angle}
	Consider any nonzero vectors $v_1, v_2 \in \mathbb{R}^m$ that form an acute angle $\theta_{12} \in (0, \frac{\pi}{2})$. Let $v_3 = \lambda_1 v_1 + \lambda_2 v_2$ be a conic combination of $v_1$ and $v_2$, where $\lambda_1, \lambda_2 \geq 0$. Then, the angle $\theta_{23}$ between $v_3$ and $v_2$ is not more than $\theta_{12}$. Moreover, if $\lambda_1,\lambda_2 > 0$, then $\theta_{23}$ is smaller than $\theta_{12}$.
\end{lemma}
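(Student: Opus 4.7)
The plan is to reduce the claim to a comparison of cosines and then verify the resulting inequality by a short algebraic calculation. Since cosine is strictly decreasing on $[0,\pi]$, proving $\theta_{23} \le \theta_{12}$ is equivalent to proving $\cos\theta_{23} \ge \cos\theta_{12}$. I would introduce the shorthand $a=\|v_1\|$, $b=\|v_2\|$, $c=\cos\theta_{12}\in(0,1)$ so that $v_1^\top v_2 = abc$, and note that $\|v_3\|^2=\lambda_1^2 a^2 + 2\lambda_1\lambda_2 abc + \lambda_2^2 b^2$. A direct computation gives
\begin{equation*}
    \cos\theta_{23} \;=\; \frac{v_3^\top v_2}{\|v_3\|\,\|v_2\|} \;=\; \frac{\lambda_1 ac + \lambda_2 b}{\|v_3\|}.
\end{equation*}

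Next I would establish the inequality $\lambda_1 ac + \lambda_2 b \ge c\,\|v_3\|$. Both sides are nonnegative because $c>0$ and $\lambda_1,\lambda_2\ge 0$, so squaring is equivalence-preserving. After expanding, the difference of squared sides simplifies to
\begin{equation*}
    (\lambda_1 ac + \lambda_2 b)^2 - c^2\|v_3\|^2 \;=\; (1-c^2)\,\lambda_2 b\,(2\lambda_1 ac + \lambda_2 b),
\end{equation*}
which is manifestly nonnegative since $c\in(0,1)$, $a,b>0$, and $\lambda_1,\lambda_2\ge 0$. When $\lambda_1,\lambda_2>0$, the factor $\lambda_2 b\,(2\lambda_1 ac + \lambda_2 b)$ is strictly positive, so the inequality is strict and hence $\theta_{23}<\theta_{12}$.

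The only delicate point is well-definedness of $\theta_{23}$, i.e., that $v_3\ne 0$ whenever required. This follows from $v_1^\top v_2 = abc>0$: the expression $\|v_3\|^2=\lambda_1^2 a^2+2\lambda_1\lambda_2 abc+\lambda_2^2 b^2$ is a sum of nonnegative terms with at least one strictly positive whenever $(\lambda_1,\lambda_2)\ne (0,0)$. An alternative geometric route would be to reduce to $\mathbb{R}^2$ via the plane spanned by $v_1,v_2$, parameterize $v_2=(b,0)$ and $v_1=(a\cos\theta_{12}, a\sin\theta_{12})$, and compare $\tan\theta_{23}=\lambda_1 a\sin\theta_{12}/(\lambda_1 a\cos\theta_{12}+\lambda_2 b)$ against $\tan\theta_{12}$; this yields the conclusion in a single line, but requires justifying the planar reduction, so I would prefer the direct calculation above.
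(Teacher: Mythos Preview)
Your proof is correct and follows the same core strategy as the paper: reduce $\theta_{23}\le\theta_{12}$ to $\cos\theta_{23}\ge\cos\theta_{12}$ and verify the latter by elementary algebra. The paper first normalizes $v_1,v_2,v_3$ to unit vectors, uses the constraint $\|\bar v_3\|=1$ to express $v_1^\top v_2=\frac{1-\bar\lambda_1^2-\bar\lambda_2^2}{2\bar\lambda_1\bar\lambda_2}$, and factors $\cos\theta_{23}-\cos\theta_{12}$ as $\frac{(\bar\lambda_1+1)(\bar\lambda_1+\bar\lambda_2-1)(\bar\lambda_2+1-\bar\lambda_1)}{2\bar\lambda_1\bar\lambda_2}$; your route of squaring the equivalent inequality $\lambda_1 ac+\lambda_2 b\ge c\|v_3\|$ and factoring the difference as $(1-c^2)\lambda_2 b(2\lambda_1 ac+\lambda_2 b)$ is slightly more direct and avoids the division by $\bar\lambda_1\bar\lambda_2$, so the boundary cases $\lambda_1=0$ or $\lambda_2=0$ are handled uniformly without separate treatment.
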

\begin{proof}
	Without loss of generality, we consider $v_1$ and $v_2$ of unit norm. If not, we can normalize these vectors and perform a similar analysis. Let $\bar{v}_3 = \barlambda_1 v_1 + \barlambda_2 v_2$ be the normalized vector of $v_3$, where $\barlambda_1 = \lambda_1/\|v_3\|$, and $\barlambda_2 = \lambda_2/\|v_3\|$. It follows that
	\begin{equation}\label{eq:normalized_v3}
		\|\bar{v}_3\|^2 = \barlambda_1^2 + \barlambda_2^2 + 2\barlambda_1 \barlambda_2 v_1^\top v_2 = 1.
	\end{equation}
	Since unit vectors $v_1$ and $v_2$ form an acute angle, we know $0< v_1^\top v_2 < 1$. Therefore,
	\begin{equation*}
		\barlambda_1^2 + \barlambda_2^2 \leq \barlambda_1^2 + \barlambda_2^2 + 2\barlambda_1 \barlambda_2 v_1^{\top}v_2 \leq \left(\barlambda_1 + \barlambda_2\right)^2,
	\end{equation*}
	which leads to $\barlambda_1 + \barlambda_2 \geq 1$ and $0 \leq \barlambda_1, \barlambda_2 \leq 1$. Then,
	\begin{align*}
		\cos(\theta_{23}) &- \cos(\theta_{12}) \stackrel{\text{(a.1)}}{=} \left(\barlambda_1 v_1 + \barlambda_2 v_2\right)^\top v_2 - v_1^\top v_2 \stackrel{\text{(a.2)}}{=} \barlambda_2 + (\barlambda_1 - 1) v_1^\top v_2 \\
			&\stackrel{\text{(a.3)}}{=} \barlambda_2 + (\barlambda_1 - 1) \frac{1-\barlambda_1^2-\barlambda_2^2}{2\barlambda_1 \barlambda_2} = \frac{(\barlambda_1 + 1)(\barlambda_1 + \barlambda_2 - 1)(\barlambda_2 + 1 - \barlambda_1)}{2\barlambda_1 \barlambda_2} \stackrel{\text{(a.4)}}{\geq} 0,
	\end{align*}
	where (a.1) and (a.2) hold because $v_1$, $v_2$, and $\bar{v}_3$ are unit vectors; (a.3) leverages \eqref{eq:normalized_v3} to rewrite $v_1^{\top} v_2$; (a.4) uses the fact that $\barlambda_1$, $\barlambda_1+\barlambda_2-1$, and $\barlambda_2+1-\barlambda_1$ are nonnegative, because $\barlambda_2 + 1 - \barlambda_1 \geq \barlambda_2 \geq 0$. It follows from the monotonicity of the cosine function on $[0,\pi]$ that $\theta_{23} \leq \theta_{12}$. If $\lambda_1$ and $\lambda_2$ are strictly positive, then $\barlambda_1$ and $\barlambda_2$ lie in $(0,1)$, and $\barlambda_1 + \barlambda_2 > 1$. In this case, $\cos(\theta_{23}) - \cos(\theta_{12})$ becomes positive, and therefore $\theta_{23} < \theta_{12}$.
\end{proof}

\cref{lem:conic_comb_angle} formalizes the geometric intuition that if a vector lies in a conic region bounded by two vectors forming an acute angle, then this vector is closer to one of the boundary vectors than two boundary vectors are to each other. We provide the proof of \cref{thm:polarized_sens} as follows.

\begin{proof}
	We divide the proof into three parts. First, we prove the existence and uniqueness of the steady state $\pss$ of the polarized dynamics \eqref{eq:polarized_dynamics}. We then derive the steady-state sensitivity of \eqref{eq:polarized_dynamics}. Finally, we quantify how the coefficients $\lambda$ and $\sigma$ in \eqref{eq:polarized_dynamics} influence the closeness between $\pss$ and $q$ (or $-q$), an indicator of the steering ability of the decision-maker.

	\emph{Existence and uniqueness of $\pss$:} Note that the initial state $p_0$ is nonzero, because $\|p_0\|=1$. We first address the special case in \cref{fig:polarize_ss_orthogonal}, i.e., $p_0^\top q = 0$. The dynamics~\eqref{eq:polarized_dynamics} indicate that $p_k = p_0 \neq 0, \forall k \in \mathbb{N}$. Therefore, the steady state exists and is unique: it equals $p_0$. In another special case where $p_0$ and $q$ are parallel, the unique steady state is either $q$ or $-q$, depending on $\sigma$, $\|q\|$, and if $p_0$ and $q$ are of the same or opposite direction. 

	We proceed to analyze the case in \cref{fig:polarize_ss_acute}, i.e., $p_0^\top q > 0$ and $p_0$ and $q$ are not parallel. Let $\sgn(\cdot)$ denote the sign function, i.e., $\sgn(x)$ equals $-1$, $0$, or $1$ if $x<0$, $x=0$, or $x>0$, respectively. The initial condition satisfies $\sgn(p_0^\top q) = 1$. Suppose that for a particular $k \in \mathbb{N}$, $p_k^\top q > 0$ holds. It follows from the dynamics~\eqref{eq:polarized_dynamics} that
	\begin{equation}\label{eq:sgn_inner_prod_dyn}
		\sgn(p_{k+1}^\top q) = \sgn(\tilde{p}_{k+1}^\top q) = \sgn\left((\lambda + \sigma\|q\|^2)p_k^\top q + (1-\lambda)p_0^\top q\right) \stackrel{\text{(a.1)}}{=} 1,
	\end{equation}
	where (a.1) uses the base case $p_0^\top q > 0$ and the induction hypothesis $p_k^\top q > 0$. The above derivation is similar to \citet[Proof of Propostion~1]{dean2022preference}. Hence, $p_{k+1}^\top q > 0$ also holds. Therefore, for any $k \in \mathbb{N}$, $p_k^\top q > 0$, implying $p_k$ is nonzero and that $p_k$ and $q$ always form an acute angle. Let $p_k^+ \triangleq (1-\lambda)p_0 + \sigma \cdot (p_k^{\top} q) q$ be an intermediate vector. We now prove the following proposition on the angles related to $p_k, p_{k+1}$, and $p_k^+$ illustrated by \cref{fig:polarize_intermediate_proof}.

	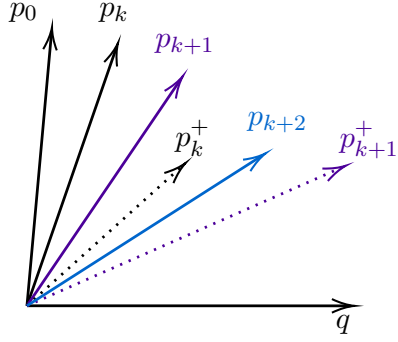
\begin{figure}[!tb]%
	    \centering
	    \tikzset{every picture/.style={line width=1pt}} 
	    
	    \resizebox{!}{4.5cm}{
	    \begin{tikzpicture}[x=0.75pt,y=0.75pt,yscale=-1,xscale=1]
	    
	    \draw    (110.5,260) -- (153.83,134.86) ;
	    \draw [shift={(154.48,132.97)}, rotate = 109.1] [color={rgb, 255:red, 0; green, 0; blue, 0 }  ][line width=0.75]    (10.93,-3.29) .. controls (6.95,-1.4) and (3.31,-0.3) .. (0,0) .. controls (3.31,0.3) and (6.95,1.4) .. (10.93,3.29)   ;
	    \draw    (110.5,260) -- (266.68,260.04) ;
	    \draw [shift={(268.68,260.05)}, rotate = 180.02] [color={rgb, 255:red, 0; green, 0; blue, 0 }  ][line width=0.75]    (10.93,-3.29) .. controls (6.95,-1.4) and (3.31,-0.3) .. (0,0) .. controls (3.31,0.3) and (6.95,1.4) .. (10.93,3.29)   ;
	    \draw    (110.5,260) -- (122.42,127.39) ;
	    \draw [shift={(122.6,125.4)}, rotate = 95.14] [color={rgb, 255:red, 0; green, 0; blue, 0 }  ][line width=0.75]    (10.93,-3.29) .. controls (6.95,-1.4) and (3.31,-0.3) .. (0,0) .. controls (3.31,0.3) and (6.95,1.4) .. (10.93,3.29)   ;
	    \draw [color={rgb, 255:red, 76; green, 0; blue, 153 }  ,draw opacity=1 ][fill={rgb, 255:red, 208; green, 2; blue, 27 }  ,fill opacity=1 ]   (110.5,260) -- (184.7,150.62) ;
	    \draw [shift={(185.82,148.97)}, rotate = 124.15] [color={rgb, 255:red, 76; green, 0; blue, 153 }  ,draw opacity=1 ][line width=0.75]    (10.93,-3.29) .. controls (6.95,-1.4) and (3.31,-0.3) .. (0,0) .. controls (3.31,0.3) and (6.95,1.4) .. (10.93,3.29)   ;
	    \draw  [dash pattern={on 0.84pt off 2.51pt}]  (110.5,260) -- (185.67,192.31) ;
	    \draw [shift={(187.15,190.97)}, rotate = 137.99] [color={rgb, 255:red, 0; green, 0; blue, 0 }  ][line width=0.75]    (10.93,-3.29) .. controls (6.95,-1.4) and (3.31,-0.3) .. (0,0) .. controls (3.31,0.3) and (6.95,1.4) .. (10.93,3.29)   ;
	    \draw [color={rgb, 255:red, 76; green, 0; blue, 153 }  ,draw opacity=1 ] [dash pattern={on 0.84pt off 2.51pt}]  (110.5,260) -- (262.65,193.11) ;
	    \draw [shift={(264.48,192.3)}, rotate = 156.27] [color={rgb, 255:red, 76; green, 0; blue, 153 }  ,draw opacity=1 ][line width=0.75]    (10.93,-3.29) .. controls (6.95,-1.4) and (3.31,-0.3) .. (0,0) .. controls (3.31,0.3) and (6.95,1.4) .. (10.93,3.29)   ;
	    \draw [color={rgb, 255:red, 0; green, 102; blue, 204 }  ,draw opacity=1 ][fill={rgb, 255:red, 208; green, 2; blue, 27 }  ,fill opacity=1 ]   (110.5,260) -- (224.13,187.38) ;
	    \draw [shift={(225.82,186.3)}, rotate = 147.42] [color={rgb, 255:red, 0; green, 102; blue, 204 }  ,draw opacity=1 ][line width=0.75]    (10.93,-3.29) .. controls (6.95,-1.4) and (3.31,-0.3) .. (0,0) .. controls (3.31,0.3) and (6.95,1.4) .. (10.93,3.29)   ;
	    
	    \draw (258,262.9) node [anchor=north west][inner sep=0.75pt]    {$q$};
	    \draw (100.5,111.9) node [anchor=north west][inner sep=0.75pt]    {$p_{0}$};
	    \draw (143.83,112.23) node [anchor=north west][inner sep=0.75pt]    {$p_{k}$};
	    \draw (170.83,126.9) node [anchor=north west][inner sep=0.75pt]  [color={rgb, 255:red, 76; green, 0; blue, 153 }  ,opacity=1 ]  {$\textcolor[rgb]{0.3,0,0.6}{p_{k+1}}$};
	    \draw (180.5,168.57) node [anchor=north west][inner sep=0.75pt]    {$p_{k}^{+}$};
	    \draw (259.83,168.9) node [anchor=north west][inner sep=0.75pt]  [color={rgb, 255:red, 76; green, 0; blue, 153 }  ,opacity=1 ]  {$p_{k+1}^{+}$};
	    \draw (215.5,164.9) node [anchor=north west][inner sep=0.75pt]  [color={rgb, 255:red, 0; green, 102; blue, 204 }  ,opacity=1 ]  {$\textcolor[rgb]{0,0.4,0.8}{p_{k+2}}$};
	    
	    \end{tikzpicture}
	    }

	    \caption{This figure illustrates the relative positions of $p_k,p_{k+1}$, and $p_k^+$ and the trend that $p_k$ becomes closer to $q$ as $k$ increases, given $p_0^\top q > 0$.}
	    \label{fig:polarize_intermediate_proof}
    \end{figure}

	\begin{proposition}\label{prop:relative_angle}
		If $p_0^\top q > 0$ and $p_0$ and $q$ are not parallel, then the dynamics~\eqref{eq:polarized_dynamics} ensure that for $k \in \mathbb{N}$, the angle between $p_k^+$ and $q$ is smaller than that between $p_{k+1}$ and $q$, which in turn is smaller than that between $p_k$ and $q$.
	\end{proposition}
	
	In the base case when $k=0$, $p_0^+$ is a conic combination of $p_0$ and $q$, and $p_1$ is a conic combination of $p_0$ and $p_0^+$. Therefore, \cref{lem:conic_comb_angle} ensures that \cref{prop:relative_angle} holds. Suppose that \cref{prop:relative_angle} is true for a particular $k \in \mathbb{N}$. Because $p_k$ and $p_{k+1}$ are of unit norm and the angle between $p_{k+1}$ and $q$ is smaller than that between $p_k$ and $q$, we know that $0 < p_{k}^\top q < p_{k+1}^\top q$. Hence,
	\begin{equation*}
		p_{k+1}^+ = (1-\lambda)p_0 + \sigma\cdot (p_{k+1}^\top q)q \stackrel{\text{(a.1)}}{=} p_k^{+} + \underbrace{\sigma\cdot(p_{k+1}^\top q - p_k^\top q)}_{> 0} q,
	\end{equation*}
	where (a.1) uses the expression of $p_k^+$. Since $p_{k+1}^+$ is a conic combination of $p_k^+$ and $q$ and the combination coefficients are positive, we know from \cref{lem:conic_comb_angle} that the angle between $p_{k+1}^+$ and $q$ is smaller than that between $p_k^+$ and $q$, which is further smaller than that between $p_{k+1}$ and $q$ (c.f.~the induction hypothesis). Finally, $p_{k+2}$ lies in the convex cone formed by $p_{k+1}$ and $p_{k+1}^+$. Hence, \cref{prop:relative_angle} also holds true for $k+1$. Therefore, \cref{prop:relative_angle} is proved.

	A consequence of \cref{prop:relative_angle} is that the angle between $p_k$ and $q$ is monotonically decreasing as $k$ increases. Additionally, this angle is bounded from below by zero, because $p_k$ always lies in the convex cone formed by $p_0$ and $q$. The monotone convergence theorem ensures that there exists a unique steady state $\pss$, i.e., the limiting point of $p_k$.

	The analysis for the case in \cref{fig:polarize_ss_obtuse} (i.e., $p_0^\top q < 0$) is similar to the above analysis for \cref{fig:polarize_ss_acute} (i.e., $p_0^\top q > 0$) by considering the convex cone formed by $p_0$ and $-q$, or put differently, the angle between $p_k$ and $-q$ for different $k$.

	\emph{Derivation of the steady-state sensitivity:} 
	The steady state of the dynamics \eqref{eq:polarized_dynamics} satisfies
	\begin{subequations}\label{eq:polarized_fixed_point}
	\begin{align}
		\tilde{p}_\textup{ss} &= \lambda \pss + (1-\lambda) p_0 + \sigma \cdot (\pss^{\top} q) q \triangleq \tilde{f}(\pss,q,p_0), \\
		\pss &= \frac{\tilde{p}_\textup{ss}}{\|\tilde{p}_\textup{ss}\|} = \frac{\tilde{f}(\pss,q,p_0)}{\|\tilde{f}(\pss,q,p_0)\|} \triangleq f(\pss,q,p_0).
	\end{align}
	\end{subequations}
	We have proved that $\pss$ (and also $\tilde{p}_\textup{ss}$) are nonzero, see the reasoning about \eqref{eq:sgn_inner_prod_dyn}. Hence,
	\begin{align*}
		\nabla_q f(\pss,q,p_0) &= \sigma \left[(\pss^\top q) I \!+\! \pss q^\top \right] \left(\frac{1}{\|\tilde{p}_\textup{ss}\|} I \!-\! \frac{\tilde{p}_\textup{ss} \tilde{p}^\top_\textup{ss}}{\|\tilde{p}_\textup{ss}\|^3} \right) = \frac{\sigma}{\|\tilde{p}_\textup{ss}\|} \left[(\pss^\top q) I \!+\! \pss q^\top \right] \left(I \!-\! \pss \pss^{\top}\right), \\
		\nabla_p f(\pss,q,p_0) &= (\lambda I + \sigma q q^\top) \frac{I - \pss \pss^\top}{\|\tilde{p}_\textup{ss}\|}.
	\end{align*}
	We further apply \eqref{eq:sens_formula} and obtain the steady-state sensitivity as follows
	\begin{align*}
		\nabla_q h(q,p_0) &= -\nabla_q f(\pss,q,p_0) \left[\nabla_p f(\pss,q,p_0) - I\right]^{-1} \\
			&= - \sigma \left(\pss^\top q I + \pss q^\top\right)(I- \pss \pss^\top)\left[(\lambda I + \sigma q q^\top)(I-\pss \pss^\top) - \|\tilde{p}_\textup{ss}\| I\right]^{-1}.
	\end{align*}
	
	\begin{figure}[!tb]%
	    \centering
		\tikzset{every picture/.style={line width=1pt}} 

		\resizebox{!}{4.5cm}{
		\begin{tikzpicture}[x=0.75pt,y=0.75pt,yscale=-1,xscale=1]

		\draw    (110.5,260) -- (252.05,259.55) ;
		\draw [shift={(254.05,259.55)}, rotate = 179.82] [color={rgb, 255:red, 0; green, 0; blue, 0 }  ][line width=0.75]    (10.93,-3.29) .. controls (6.95,-1.4) and (3.31,-0.3) .. (0,0) .. controls (3.31,0.3) and (6.95,1.4) .. (10.93,3.29)   ;
		\draw    (110.5,260) -- (122.42,127.39) ;
		\draw [shift={(122.6,125.4)}, rotate = 95.14] [color={rgb, 255:red, 0; green, 0; blue, 0 }  ][line width=0.75]    (10.93,-3.29) .. controls (6.95,-1.4) and (3.31,-0.3) .. (0,0) .. controls (3.31,0.3) and (6.95,1.4) .. (10.93,3.29)   ;
		\draw [color={rgb, 255:red, 0; green, 102; blue, 204 }  ,draw opacity=1 ][fill={rgb, 255:red, 208; green, 2; blue, 27 }  ,fill opacity=1 ]   (110.5,260) -- (223.95,174.25) ;
		\draw [shift={(225.55,173.05)}, rotate = 142.92] [color={rgb, 255:red, 0; green, 102; blue, 204 }  ,draw opacity=1 ][line width=0.75]    (10.93,-3.29) .. controls (6.95,-1.4) and (3.31,-0.3) .. (0,0) .. controls (3.31,0.3) and (6.95,1.4) .. (10.93,3.29)   ;
		\draw  [dash pattern={on 4.5pt off 4.5pt}]  (117.55,185.7) -- (208.55,185.55) ;
		\draw  [dash pattern={on 0.84pt off 2.51pt}]  (128.05,247.55) .. controls (132.89,242.92) and (135.68,257.55) .. (131.18,259.55) ;
		\draw  [dash pattern={on 0.84pt off 2.51pt}]  (114.33,219.67) .. controls (133.18,211.55) and (160.68,254.55) .. (155.18,260.55) ;
		\draw  [dash pattern={on 0.84pt off 2.51pt}]  (190.05,185.05) .. controls (182.26,195.92) and (197.55,196.05) .. (193.05,198.05) ;

		\draw (241.67,267.07) node [anchor=north west][inner sep=0.75pt]    {$q$};
		\draw (100.5,111.9) node [anchor=north west][inner sep=0.75pt]    {$p_{0}$};
		\draw (223.17,146.23) node [anchor=north west][inner sep=0.75pt]  [color={rgb, 255:red, 0; green, 102; blue, 204 }  ,opacity=1 ]  {$\pss$};
		\draw (125.33,163.07) node [anchor=north west][inner sep=0.75pt]  [font=\normalsize]  {$\sigma \| q\|^2 \cos( \bar{\theta} )$};
		\draw (71.33,204.73) node [anchor=north west][inner sep=0.75pt]  [font=\normalsize]  {$1-\lambda $};
		\draw (135.33,241.4) node [anchor=north west][inner sep=0.75pt]  [font=\normalsize]  {$\bar{\theta} $};
		\draw (154.17,229.23) node [anchor=north west][inner sep=0.75pt]  [font=\normalsize]  {$\phi $};
		\draw (172.17,189.23) node [anchor=north west][inner sep=0.75pt]  [font=\normalsize]  {$\bar{\theta} $};
		\end{tikzpicture}
		}

		\caption{This figure demonstrates the relative positions of $p_0$, $\pss$, and $q$ and shows that $(1-\lambda) p_0 + \sigma\cdot(\pss^\top q) q$ is collinear with $\pss$.}
	    \label{fig:polarize_steady_state_proof}
	\end{figure}
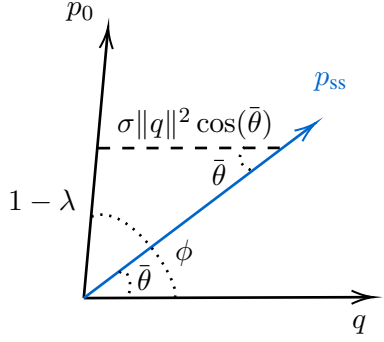

	\emph{The influence of coefficients:} We quantify how the coefficients $\lambda$ and $\sigma$ in \eqref{eq:polarized_dynamics} determine the steering ability of the decision-maker, which is reflected by the angle $\bar{\theta}$ between $\pss$ and $q$ (or $-q$). We consider the case where $p_0^\top q > 0$ and $p_0$ and $q$ are not parallel, and a similar reasoning applies to the case of $p_0^\top q < 0$. Let $\phi \in (0,90^\circ)$ denote the angle between $p_0$ and $q$. Note that $p_0$ and $\pss$ are of unit norm. Since $\pss$ satisfies the fixed-point equation \eqref{eq:polarized_fixed_point}, the vector $(1-\lambda) p_0 + \sigma\cdot(\pss^\top q) q$ (i.e., $(1-\lambda) p_0 + \sigma\|q\|^2 \cos(\bar{\theta})q$) is collinear with $\pss$, see \cref{fig:polarize_steady_state_proof}. While this illustration and the following analysis are for two-dimensional vectors, extensions can be derived for high-dimensional scenarios. We analyze the triangle in \cref{fig:polarize_steady_state_proof} and use the law of sines to obtain
	\begin{equation*}
		\frac{\sigma \|q\|^2 \cos(\bar{\theta})}{\sin(\phi-\bar{\theta})} = \frac{1-\lambda}{\sin(\bar{\theta})} \quad \stackrel{\text{(a.1)}}{\Longrightarrow} \quad \frac{\sin(2\bar{\theta})}{\sin(\phi-\bar{\theta})} = \frac{2(1-\lambda)}{\sigma \|q\|^2},
	\end{equation*}
	where (a.1) invokes the double angle formula $\sin(2\bar{\theta}) = 2\sin(\bar{\theta})\cos(\bar{\theta})$. Note that $\sin(2\bar{\theta})/\sin(\phi-\bar{\theta})$ is an increasing function of $\bar{\theta}$. The increase of $\lambda \in [0,1]$ and $\sigma > 0$ result in the decrease of $\bar{\theta}$, implying a stronger steering ability of the decision-maker.
\end{proof}

\else
\begin{proof}
	\newcommand{\checkqed}{}
	We provide a sketch proof. The full details are referred to our online report\footnote{\url{https://tinyurl.com/295rhy8w}}. 

	\emph{Existence and uniqueness of $\pss$:} Note that the initial state $p_0$ is nonzero, because $\|p_0\|=1$. We first address the special case in \cref{fig:polarize_ss_orthogonal}, i.e., $p_0^\top q = 0$. The dynamics~\eqref{eq:polarized_dynamics} indicate that $p_k = p_0 \neq 0, \forall k \in \mathbb{N}$. Therefore, the steady state exists and is unique: it equals $p_0$. In another special case where $p_0$ and $q$ are parallel, the unique steady state is either $q$ or $-q$, depending on $\sigma$, $\|q\|$, and if $p_0$ and $q$ are of the same or opposite direction. 

	We analyze the case in \cref{fig:polarize_ss_acute}, i.e., $p_0^\top q > 0$ and $p_0$ and $q$ are not parallel. Let $\sgn(\cdot)$ denote the sign function, i.e., $\sgn(x)$ equals $-1$, $0$, or $1$ if $x<0$, $x=0$, or $x>0$, respectively. The initial condition satisfies $\sgn(p_0^\top q) = 1$. Suppose that for a particular $k \in \mathbb{N}$, $p_k^\top q > 0$ holds. Hence,
	\begin{equation}\label{eq:sgn_inner_prod_dyn}
		\sgn(p_{k+1}^\top q) = \sgn(\tilde{p}_{k+1}^\top q) = \sgn\left((\lambda + \sigma\|q\|^2)p_k^\top q + (1-\lambda)p_0^\top q\right) \stackrel{\text{(a.1)}}{=} 1,
	\end{equation}
	where (a.1) uses the base case $p_0^\top q > 0$ and the induction hypothesis $p_k^\top q > 0$. The above derivation is similar to \citet[Proof of Propostion~1]{dean2022preference}. Hence, $p_{k+1}^\top q > 0$ also holds. Therefore, for any $k \in \mathbb{N}$, $p_k^\top q > 0$, implying $p_k$ is nonzero and that $p_k$ and $q$ always form an acute angle. Let $p_k^+ \triangleq (1-\lambda)p_0 + \sigma \cdot (p_k^{\top} q) q$ be an intermediate vector. The following proposition on the angles related to $p_k, p_{k+1}$, and $p_k^+$ holds.

	\begin{proposition}\label{prop:relative_angle}
		If $p_0^\top q > 0$ and $p_0$ and $q$ are not parallel, then the dynamics~\eqref{eq:polarized_dynamics} ensure that for $k \in \mathbb{N}$, the angle between $p_k^+$ and $q$ is smaller than that between $p_{k+1}$ and $q$, which in turn is smaller than that between $p_k$ and $q$.
	\end{proposition}

	\cref{prop:relative_angle} implies the angle between $p_k$ and $q$ is monotonically decreasing as $k$ increases. This angle is bounded from below by zero, because $p_k$ always lies in the convex cone formed by $p_0$ and $q$. The monotone convergence theorem ensures that there exists a unique steady state $\pss$, i.e., the limiting point of $p_k$. Similar analysis can be performed for the case in \cref{fig:polarize_ss_obtuse} (i.e., $p_0^\top q < 0$).


	\emph{Derivation of the steady-state sensitivity:} 
	The steady state of the dynamics \eqref{eq:polarized_dynamics} satisfies
	\begin{subequations}\label{eq:polarized_fixed_point}
	\begin{align}
		\tilde{p}_\textup{ss} &= \lambda \pss + (1-\lambda) p_0 + \sigma \cdot (\pss^{\top} q) q \triangleq \tilde{f}(\pss,q,p_0), \\
		\pss &= \frac{\tilde{p}_\textup{ss}}{\|\tilde{p}_\textup{ss}\|} = \frac{\tilde{f}(\pss,q,p_0)}{\|\tilde{f}(\pss,q,p_0)\|} \triangleq f(\pss,q,p_0).
	\end{align}
	\end{subequations}
	We have proved that $\pss$ (and also $\tilde{p}_\textup{ss}$) are nonzero, see the reasoning about \eqref{eq:sgn_inner_prod_dyn}. Hence,
	\begin{align*}
		\nabla_q f(\pss,q,p_0) &= \sigma \left[(\pss^\top q) I \!+\! \pss q^\top \right] \left(\frac{1}{\|\tilde{p}_\textup{ss}\|} I \!-\! \frac{\tilde{p}_\textup{ss} \tilde{p}^\top_\textup{ss}}{\|\tilde{p}_\textup{ss}\|^3} \right) = \frac{\sigma}{\|\tilde{p}_\textup{ss}\|} \left[(\pss^\top q) I \!+\! \pss q^\top \right] \left(I \!-\! \pss \pss^{\top}\right), \\
		\nabla_p f(\pss,q,p_0) &= (\lambda I + \sigma q q^\top) \frac{I - \pss \pss^\top}{\|\tilde{p}_\textup{ss}\|}.
	\end{align*}
	We further apply \eqref{eq:sens_formula} and obtain the steady-state sensitivity as follows
	\begin{align*}
		\nabla_q h(q,p_0) &= -\nabla_q f(\pss,q,p_0) \left[\nabla_p f(\pss,q,p_0) - I\right]^{-1} \\
			&= - \sigma \left(\pss^\top q I + \pss q^\top\right)(I- \pss \pss^\top)\left[(\lambda I + \sigma q q^\top)(I-\pss \pss^\top) - \|\tilde{p}_\textup{ss}\| I\right]^{-1}.
			\tag*{\ensuremath{\blacksquare}}
	\end{align*}
\end{proof}
\fi



    \vskip 0.2in
    \bibliography{article}

\end{document}